\documentclass[12pt,twoside]{amsart}
\usepackage{amsmath}
\usepackage{amssymb}
\usepackage{graphicx}

%
%


\def\sideremark#1{\ifvmode\leavevmode\fi\vadjust{\vbox to0pt{\vss 
      \hbox to 0pt{\hskip\hsize\hskip1em           
 \vbox{\hsize3cm\tiny\raggedright\pretolerance10000
 \noindent #1\hfill}\hss}\vbox to8pt{\vfil}\vss}}}%

                                                   %

\newtheorem{theorem}{Theorem}
\newtheorem{corollary}{Corollary}
\newtheorem{proposition}{Proposition}
\newtheorem{lemma}{Lemma}

\theoremstyle{definition}
\newtheorem{definition}{Definition}
\newtheorem{example}{Example}
\theoremstyle{remark}
\newtheorem{remark}{Remark}

\renewcommand{\epsilon}{\varepsilon}

\newcommand{\var}{\mathcal{V}ar}

\newcommand{\R}{\mathbb{R}}
\newcommand{\C}{\mathbb{C}}

\def\const{\operatorname{const}}
\def\dist{\operatorname{dist}}
\def\Arg{\operatorname{Arg}}

\newcommand{\dr}{\mathrm{d}}
\newcommand{\bbC}{\mathbb{C}}

\newcommand{\diff}{\Delta}
\newcommand{\hp}{\mathcal{H}}
\DeclareMathOperator{\re}{\mathrm{Re}}
\DeclareMathOperator{\im}{\mathrm{Im}}

\newcommand{\bbr}{\mathbb{R}}
\newcommand{\bbc}{\mathbb{C}}
\newcommand{\bbz}{\mathbb{Z}}
\newcommand{\ep}{\epsilon}

\title[Codimension one exponential case]{Pseudo-Abelian integrals: unfolding generic exponential case}
\author{Marcin Bobie\'nski}
\address{Institute of Mathematics, Warsaw University,
ul. Banacha 2, 02-097 Warsaw, Poland} \email{mbobi@mimuw.edu.pl}
\thanks{This research was supported by
KBN Grant No 2 P03A 015 29, Conseil Regional de Bourgogne 2006, (No
05514AA010S4115) and Soref New Scientists Start up Fund Fusfeld
Research Fund}

\author{Pavao Marde\v{s}i\'c}
\address{Universit\'e de Bourgogne, Institut de
Math\'ematiques de Bourgogne, U.M.R. 5584 du C.N.R.S., B.P. 47870,
21078 DIJON CEDEX - FRANCE} \email{mardesic@u-bourgogne.fr}

\author{Dmitry Novikov}
\address{Department of Mathematics, Weizmann Institute of Science,
Rehovot, Israel}\email{dmitry.novikov@weizmann.ac.il}
\begin{document}
\today \maketitle

\begin{abstract}
We consider functions of the form $H_0=P^{a_1}\cdots P_k^{a_k}e^{R/Q}$, with $P_i,$ $R$, and $Q\in\R[x,y]$, which are (generalized Darboux) first integrals of the polynomial system $Md\log H_0=0$.  We assume that $H_0$ defines a family $\gamma(h)\subset H_0^{-1}(h)$ of real cycles in a region bounded by a polycycle.

To each polynomial form $\eta$ one can associate the pseudo-abelian integrals $I(h)$ of $M^{-1}\eta$ along $\gamma(h)$, which is the first order term of the displacement function of the orbits of $MdH_0+\delta\eta=0$.

We consider Darboux first integrals unfolding $H_0$ (and its saddle-nodes) and pseudo-abelian integrals associated to these unfoldings. Under genericity assumptions we show the existence of a uniform local bound for the number of zeros of these pseudo-abelian integrals.

The result is part of a program
to extend Varchenko-Khovanskii's theorem from abelian integrals to pseudo-abelian integrals and prove the existence of a bound for the number of their zeros in function of the degree of the polynomial system only.
\end{abstract}

\section{Introduction and main Results}
This paper is a
part of a program for generalizing the results of Varchenko and
Khovanskii \cite{v,kh} giving the boundednes of the number of zeros
$A(n)$ of Abelian integrals corresponding to polynomial deformations
of degree $n$ of Hamiltonian vector fields. We want to generalize
this result to deformations of polynomial Darboux integrable
systems. The general strategy as in \cite{v,kh} is to prove local
boundednes and use compactness of the product of the parameter space
by the limit periodic sets (see also Roussarie \cite{r2}). In
previous papers \cite{N}, \cite{BM} we proved local boundednes of
the number of zeros of pseudo-abelian integrals under generic
hypothesis. We prove here an analogous result in one of the first
non-generic cases where an exponential factor appears in the
first integral. Generically, in the unfolding two
invariant algebraic curves bifurcate from the exponential factor in
saddle-node bifurcations. Other nongeneric cases have been studied in \cite{B} and \cite{NN}

Consider a real rational closed meromorphic one-form $\theta_0$
having a generalized Darboux first integral of the form
\begin{equation}\label{eq:theta0} H_0=P_1^{a_1}\cdots
P_k^{a_k}e^{R/Q},\quad \theta_0=d(\log H_0).
\end{equation}

Choose a limit periodic set i.e. bounded component of $\R^2\setminus \{Q\prod P_i=0\}$
filled by cycles $\gamma(h)\subset\{H_0=h\}$, $h\in(0,b)$. Denote by $D\subset H^{-1}(0)$ the polycycle which is in
the boundary of this limit periodic set. The other component of the boundary of the limit periodic set belongs to $H^{-1}(b)$.

Let $U^\R$ be a neighborhood of
$D$ in $\R^2$, and let $U$ be a neighborhood of $D$ in $\C^2$.

We assume that $Q^{-1}(0)$ contains one or more edges of $D$. If the curve
$Q^{-1}(0)$ does not cut the polycycle $D$, then the first integral has a
form $H=f^*\, \prod P_i^{a_i}$, where $f^*$ is a non-vanishing
holomorphic function in a neighborhood of the polycycle and the
proof in \cite{N} or \cite{BM} goes through without any
modification. Note that the assumption that the curve $Q^{-1}(0)$ cuts the
polycycle $D$ implies that $R^{-1}(0)\cap Q^{-1}(0)=\emptyset$.
Indeed, in a neighborhood of any (transversal) intersection point
$p\in R^{-1}(0)\cap Q^{-1}(0)$ the first integral function reads
$H=e^{x/y}$ and so the point $(0,0)$ does not belong to the closure
of a bounded region filled with closed orbits $\gamma(h)$.

Denote the union of the edges of $D$ lying in $Q^{-1}(0)$ by $L_E$. Each of the
vertices of $D$ lying on $L_E$ is a saddle-node and $L_E$ lies in the strong variety of these saddle-nodes. [[see picture 1a]]

We assume that the form $\theta_0$ is generic:
\begin{definition}
\label{genericity} Denote $L_i^\R=P_i^{-1}(0),$ $L_E^\R=Q^{-1}(0)$
and $L_i^\C$ and $L_E^\C$ their complexification.
We assume that the following properties are satisfied by $\theta_0$
in the neighborhood $U$ of the polycycle $D$:
\begin{enumerate}
\item{} the curves $P_j^{-1}(0)$,   $Q^{-1}(0)$ are smooth and reduced,
\item{} $P_i^{-1}(0)$ and $P_j^{-1}(0)$, as well as
    $Q^{-1}(0)$ and $P_j^{-1}(0)$ intersect transversally.
\end{enumerate}
\end{definition}

Consider an unfolding $\theta_{\epsilon,\alpha}$ of the form
$\theta_0$, where $\theta_{\epsilon,\alpha}$ are real rational
closed one-forms with the Darboux first integral
\begin{equation}\label{eq:ea family}
H_{\epsilon,\alpha}=P_1^{a_1}\cdots
P_k^{a_k}Q^{\alpha-1/\epsilon}(Q+\epsilon R)^{1/\epsilon},\quad
\theta_{\epsilon,\alpha}=d(\log H_{\epsilon,\alpha}).\end{equation}

\begin{figure}[htp]
\begin{center}
\includegraphics[width=0.9\hsize]{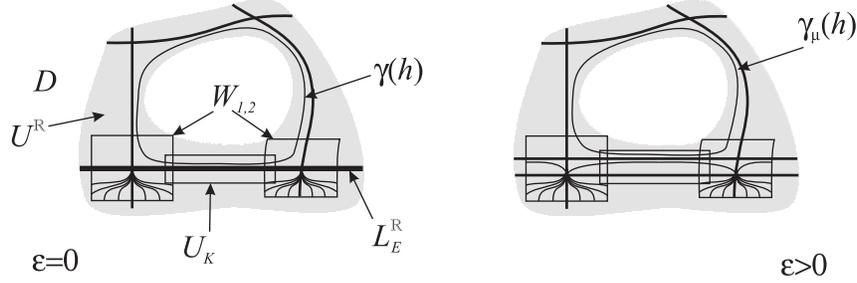}
\end{center}
\caption{Unfolding of the polycycle $D$}\label{fig:Dunfolding}
\end{figure}

The foliation defined by $\theta_{\epsilon, \alpha}$ has a maximal
nest of cycles
$\gamma_{\epsilon,\alpha}(h)\subset\{H_{\epsilon,\alpha}=h\}$,
$h\in(0,b(\epsilon,\alpha))$ filling a connected component of
$\R^2\setminus\{Q(Q+\epsilon R)\prod P_i=0\}$ whose boundary is a
polycycle  $D_{\epsilon,\alpha}$ close to $D$.
Consider pseudo-abelian integrals of the form
\begin{equation}
\label{eq:integral}
I_{\epsilon,\alpha}(h)=\int_{\gamma_{\epsilon,\alpha}(h)}M^{-1}\eta,
\quad\text{where} \quad M=Q (Q+\epsilon R)\prod_{i=1}^k P_i
\end{equation}
and $\eta$ is a polynomial one-form of degree at most $n$.

This integral appears as the linear term with respect to $\delta$ of
the displacement function of a polynomial deformation
\begin{equation}
\label{polfam} M\theta_{\epsilon,\alpha}+\delta\eta=0
\end{equation}
of the Darboux integrable polynomial vector field with the first
integral $H_{\epsilon,\alpha}$, see (\cite{BM} and \cite{N}).

\begin{theorem}\label{main}
Under the genericity assumptions of Definition \ref{genericity} we
have that
the number of isolated zeros of pseudo-abelian integrals
$I_{\epsilon,\alpha}$ in their maximal interval of definition
$(0,b(\epsilon,\alpha))$ is locally uniformly bounded.

More precisely, for any $n$ there exist an $\epsilon_0>0$ and an
upper bound $N$, depending on $\theta_0$ and $n$ only, such that for
any $|\epsilon|,|\alpha|<\epsilon_0$ and any $\eta$, $\deg\eta\le
n$, the number of isolated zeros of pseudo-abelian integral
(\ref{eq:integral}) in $(0,b(\epsilon,\alpha))$ is at most $N$.
\end{theorem}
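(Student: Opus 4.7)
The overall strategy is the one pioneered by Varchenko and Khovanskii and adapted to pseudo-abelian integrals in \cite{N}, \cite{BM}: reduce to uniform local bounds by a compactness argument on the polycycle, then establish those bounds edge-by-edge and vertex-by-vertex, combining the local pieces by the Petrov/Gabrielov-Khovanskii formalism for Pfaffian functions. The entire novelty lies in handling the saddle-node vertices on $L_E$.

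First I would fix a small complex neighborhood $U$ of $D$ and cover $D$ by overlapping neighborhoods of its edges and vertices. Off the polycycle, the family $I_{\epsilon,\alpha}(h)$ extends holomorphically in $(h,\epsilon,\alpha)$ and its zeros are trivially uniformly bounded by compactness; the difficulty is concentrated at $h=0$. The cycle $\gamma_{\epsilon,\alpha}(h)$ splits into local arcs near each edge glued at small ``corner'' neighborhoods of the vertices, so $I_{\epsilon,\alpha}(h)$ decomposes, up to holomorphic remainders, into a sum of local contributions. It then suffices to exhibit each summand as an element of an explicit Chebyshev-type system whose number of zeros admits a bound depending only on $n$ and $\theta_0$.

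At a vertex where two factors $P_i^{-1}(0),P_j^{-1}(0)$ meet transversally and $Q$ does not vanish, the local analysis of \cite{N}, \cite{BM} applies verbatim: a local change of coordinates puts the integral in the form of an iterated Mellin integral $\int u^{a_i-1}v^{a_j-1}\varphi(u,v,h)\,du\,dv$, which is naturally a Pfaffian function of bounded complexity. The genuinely new case is a saddle-node corner $p\in L_E\cap P_j^{-1}(0)$. In a suitable local chart $(u,v)$ the unperturbed first integral reads $H_0\sim u^{a_j}e^{1/v}$, while the unfolding replaces the exponential by $v^{\alpha-1/\epsilon}(v+\epsilon)^{1/\epsilon}$, i.e. two power factors with exponents blowing up in opposite directions. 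I would parametrize the local piece of $\gamma_{\epsilon,\alpha}(h)$ by $v$ on an interval joining the two bifurcating invariant curves $\{v=0\}$ and $\{v=-\epsilon\}$, and express the local contribution as a single real integral whose kernel is the product $v^{\alpha-1/\epsilon}(v+\epsilon)^{1/\epsilon}u(v,h)^{a_j}$ against a polynomial factor coming from $\eta/M$.

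The principal obstacle is to produce a Chebyshev bound for this saddle-node contribution that is \emph{uniform down to} $\epsilon=0$, even though the individual exponents $\pm 1/\epsilon$ diverge. My plan for this step is to show that the local integral, as a function of $h$, satisfies a linear Picard-Fuchs type system of bounded rank whose coefficients extend analytically to a full neighborhood of $(\epsilon,\alpha)=(0,0)$ and specialize at $\epsilon=0$ to the Picard-Fuchs system of the unperturbed exponential case treated in \cite{N}, \cite{BM}; the Gabrielov-Khovanskii theorem on zeros of Pfaffian functions then furnishes the required uniform bound $N$. The technical core therefore consists in identifying a ``universal'' basis of local solutions interpolating between the two-Darboux-factor regime $\epsilon\neq 0$ and the exponential regime $\epsilon=0$, and in verifying that the system has uniformly bounded complexity; this is precisely where the transversality hypotheses of Definition \ref{genericity} are used to control the conjugations to normal form.
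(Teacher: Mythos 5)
Your plan diverges fundamentally from what the paper does, and the two steps on which it rests are precisely the ones that fail. First, the decomposition of $I_{\epsilon,\alpha}(h)$ into local contributions near edges and corners, each bounded separately as a ``Chebyshev-type'' or Pfaffian piece, does not yield a bound for the sum: zeros of a sum are not controlled by zeros of its summands unless the whole collection is shown to form a Chebyshev system, which you never establish. Moreover the individual local pieces are not Pfaffian functions of bounded complexity --- already at a saddle corner with irrational exponents $a_i,a_j$ the local integral is an infinite series $\sum c_k h^{k+a}$ (with logarithms), not a solution of any finite Pfaffian chain; if such pieces were Pfaffian, the results of \cite{N},\cite{BM} would follow at once from Khovanskii's theory, and they do not. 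Second, the claimed ``linear Picard--Fuchs type system of bounded rank whose coefficients extend analytically to a neighborhood of $(\epsilon,\alpha)=(0,0)$'' is not constructed and is not available in this setting: for Darboux first integrals with generic real exponents the monodromy generates an infinite-dimensional space of branches, pseudo-abelian integrals are not known (and not expected) to satisfy finite-rank linear systems, and neither \cite{N} nor \cite{BM} treats the exponential case nor provides any such system at $\epsilon=0$ --- those papers work with variation operators and Petrov's trick exactly because a Picard--Fuchs route is closed. Asserting the existence of a ``universal basis interpolating between the two regimes'' is a restatement of the main difficulty (uniformity as the exponents $\pm1/\epsilon$ diverge), not a proof of it.

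The paper's actual mechanism, which is absent from your proposal, is the following. After reducing to a neighborhood of the polycycle via Varchenko--Khovanskii, one constructs a transport of the real cycles (the vector fields $v_1,v_I$ glued by a partition of unity, Section~\ref{sec:transport}) giving analytic continuation of $I$ to the universal cover of a punctured disc; one then applies the iterated variation operator $Var_{a_1,\ldots,a_k}$ attached to the edges on $\{P_i=0\}$. Unlike the generic case this does not annihilate $I$, but (Theorem~\ref{var}) produces a function meromorphic in the generalized Ecalle--Roussarie compensator $-1/\omega(h,\epsilon(\mu),\alpha(\mu))$, which \emph{is} Pfaffian by \eqref{eq:omega is Pfaffian}; Khovanskii's fewnomial theory bounds its zeros uniformly, and Petrov's argument (Proposition~\ref{mainlemma}), justified near $h=0$ by the asymptotic analysis of Proposition~\ref{pr:zersing} (solving the iterated difference equation in the logarithmic chart), propagates the bound back to $I$ itself. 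Without the variation relation and the compensator, your scheme has no replacement for this step, so the proposal as written contains a genuine gap rather than an alternative proof.
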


In fact, by  Varchenko-Khovanskii's theorem \cite{v,kh} the number of zeros of
$I(h)$ in any interval $[r, b(\epsilon,\alpha))$ is locally
uniformly bounded for any $r>0$. That is the only point that has to be proved is the local boundedness of the number of zeros of pseudo-abelian integrals
in some interval $(0,r)$, for $r>0$ sufficiently small, i.e. for values corresponding to a neighborhood of the polycycle $D$.

Following long tradition of \cite{ilyashenko, ecalle}, we completely abandon polynomial settings for analytic ones, and prove more general Theorem~\ref{main:localan} below. Theorem~\ref{main:localan} deals with unfoldings of a real analytic integrable foliation defined in a neighborhood of the polycycle $D$ and claims that, assuming local analytic analogues of conditions of Theorem~\ref{main},  the number of zeros of corresponding pseudoabelian integrals is locally uniformly bounded. Theorem~\ref{main} follows from this as indicated above.

Let $\theta_0$ be  a closed meromorphic one-form defined in a topological annulus $U^\R\subset\R^2$ and satisfying the following conditions:
\begin{itemize}
\item $\theta_0=d(\frac R S)+\sum a_i\frac{dP_i}{P_i}+\theta',$ where $R,S$ and $P_i$ are analytic in $U^\R$, and $\theta'$ is a closed one-form analytic in $U^\R$;
\item $P^{-1}(0), S^{-1}(0)$ are smooth, reduced and intersect transversally.
\end{itemize}
We assume that the foliation defined by $\theta_0$ in $U^\R$ has a nest of cycles accumulating to a polycycle $D\subset U^\R$ lying in a polar locus of $\theta_0$, and let $U^\R$ be a sufficiently small neighborhood of $D$. This in particular implies that $\theta'=df^*$ for some analytic in $U^\R$ function $f^*$, which can be further assumed to be equal to zero (by changing $P_1$ to $P_1\exp(f^*/a_1)$). We assume that some edges of $D$ lie on $\{Q=0\}$, as the other case was considered before \cite{BM,N}.

Consider a finite-dimensional analytic  (with topology of uniform convergence on compact sets) family $\Theta$ of pairs $\left(\theta_{\mu},\eta_{\mu}\right)$ of one-forms defined in a complex neighborhood $U$ of the polycycle $D$, $\mu\in \R^m$. We assume that $\theta_{\mu}$ is a real meromorphic closed one-form  and $\eta_\mu$ is real holomorphic one-form in $U$.

Assume that  the polar locus $D_{\mu}$ of $\theta_{\mu}$ is a union of deformations of components of $D$: this means that the forms $Q_{1,\mu}Q_{2,\mu}P_{1,\mu}...P_{k,\mu}\theta_\mu$ are holomorphic one-forms on $U$, where $Q_{1,\mu},Q_{2,\mu}$ and $P_{i,\mu}$ are analytic in $\mu$ families of real holomorphic functions defined in $U$, with $Q_{1,0}=Q_{2,0}=Q$ and $P_{i,0}=P_i$. The function $M_\mu=Q_{1,\mu}Q_{2,\mu}P_{1,\mu}...P_{k,\mu}$ will be called the \emph{integrating factor} of $\theta_{\mu}$.

Assume moreover that the real foliations defined by $\theta_{\mu}$ have nests of cycles $\gamma_{\mu}(h)\subset\{H_\mu=h\}$ accumulating to $D_{\mu}$, where $H_\mu$ is the first integral of the foliation defined by $\theta_\mu$, namely $H_{\mu}=\exp\left(\int\theta_\mu\right)$.

\begin{theorem}\label{main:localan}
There exists $r>0$ such that the number of zeros of the pseudo-abelian integral
$$I_{\mu}(h)=\int_{\gamma_{\mu}(h)}M^{-1}\eta_\mu$$
in $(0,r)$ is uniformly bounded over all $\mu$ in a sufficiently small neighborhood of $0$ in $\R^m$.
\end{theorem}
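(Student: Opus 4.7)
The plan is to follow the general scheme of \cite{N,BM}: reduce the global zero-counting problem to a local analysis near the polycycle $D$, decompose the cycle $\gamma_\mu(h)$ into arcs attached to the vertices of $D_\mu$, express each resulting local integral as an element of a pfaffian chain whose length is uniform in $\mu$, and conclude with a Rolle/Khovanskii argument. By Varchenko--Khovanskii \cite{v,kh} the number of zeros of $I_\mu(h)$ on any interval $[r,b(\mu))$ bounded away from $0$ is already uniformly bounded, so the real work is to bound zeros on some interval $(0,r_0)$ on which $\gamma_\mu(h)$ remains in $U^\R$ close to $D_\mu$. Concretely, I would pick cross-sections transverse to each edge of $D_\mu$, depending analytically on $\mu$, and decompose $I_\mu(h)$ as a finite sum of local integrals $I_\mu^{(v)}(h)$ indexed by the vertices $v$ of $D_0$.

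Local integrals at vertices disjoint from the locus $\{Q_{1,\mu}Q_{2,\mu}=0\}$ can be treated as in \cite{N,BM}: one brings $\theta_\mu$ into a local analytic normal form in which the two invariant curves become coordinate axes, writes a Mellin-type asymptotic expansion of $I_\mu^{(v)}(h)$ at $h=0$ whose exponents and coefficients are analytic functions of $\mu$, and recognizes the result as a pfaffian function of bounded complexity. The genuinely new step concerns the saddle-node vertices of $D_0$, at which the exponential factor of $H_0$ arises from the confluence, at $\mu=0$, of the two invariant curves $\{Q_{1,\mu}=0\}$ and $\{Q_{2,\mu}=0\}$. At such a vertex I would use the analytic unfolding theory of saddle-nodes, after \cite{ilyashenko,ecalle}, to produce local analytic coordinates in which the first integral takes a model form
\[
H^{\mathrm{loc}}_\mu = u(x,y,\mu)\, x^{a}\, y^{\alpha - 1/\epsilon}\bigl(y - \epsilon\, x\, v(x,y,\mu)\bigr)^{1/\epsilon},
\]
with $u,v$ analytic and nonvanishing, and with $\epsilon(\mu),\alpha(\mu)$ analytic in $\mu$, $\epsilon(0)=0$. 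The local integral then admits an asymptotic expansion whose leading terms are fractional powers $h^{\beta_j(\mu)}$ together with their confluent limits $h^{\beta}\log h$ arising at $\epsilon=0$.

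The main obstacle is uniformity in the confluence parameter $\epsilon$: as $\epsilon\to 0$ the exponents $\pm 1/\epsilon$ individually diverge, so a naive Mellin expansion gives pfaffian chains whose defining data blow up. The key technical step, in the spirit of the Bautin-type analyses of \cite{B,NN}, is to show that after dividing by the integrating factor $M_\mu$ the two confluent branches of $(y - \epsilon x v)^{1/\epsilon}/y^{1/\epsilon}$ combine into a single well-behaved function (essentially an incomplete-Beta type function of $\epsilon$ and $h$) whose derivatives satisfy polynomial pfaffian equations with coefficients uniformly bounded and analytic in $\mu$. Establishing this uniform pfaffian representation of the local integral near each saddle-node vertex, and then gluing the local chains from all vertices into a single global chain of bounded length, is the core difficulty. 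Once that is done, Khovanskii's theorem applied to the resulting chain produces the uniform zero bound asserted in Theorem~\ref{main:localan}.
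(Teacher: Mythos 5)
There is a genuine gap, and it is the central one: you propose to represent the local integrals attached to each vertex directly as Pfaffian functions of uniformly bounded complexity and then invoke Khovanskii. Pseudo-abelian integrals, like abelian integrals, are not Pfaffian functions, and this is not what \cite{N,BM} do either. Near a saddle with first integral $x^{1/\lambda_1}y^{1/\lambda_2}$ the local integral of $M^{-1}\eta$ expands into an infinite series of powers $h^{\beta}$ with infinitely many distinct, $\mu$-dependent (generically irrational) exponents, possibly mixed with logarithms; no Pfaffian chain of length bounded uniformly in $\mu$ and in $\eta$ captures such a germ, and if it did, Khovanskii's theorem would immediately yield the global Varchenko--Khovanskii bound with no further work, which is known not to be how those results are obtained. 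The same objection applies to your ``incomplete-Beta type'' function at the saddle-node: the only object in this problem that is genuinely Pfaffian is the compensator $\omega(h,\epsilon,\alpha)$, via the relation (\ref{eq:omega is Pfaffian}), not the integral itself.

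What is missing is the mechanism that converts the multivalued transcendental integral into something to which fewnomial theory applies, and then climbs back. In the paper this is done by (i) analytically continuing $I_{\mu}(h)$ to the universal cover of a punctured disc by transporting the real cycles with the vector fields of Section~\ref{sec:transport}; (ii) applying the iterated variation operator $Var_{a_1,\dots,a_k}$ attached to the edges on $\{P_i=0\}$, which by Theorem~\ref{var} kills the contributions of those edges and leaves a finite sum of functions $f_i$ meromorphic in $-1/\omega(h,\epsilon_i(\mu),\alpha_i(\mu))$ (this uses the flow-box triviality near $\{Q=0\}$ and the pushing of the varied cycle away from the weak manifolds, Proposition~\ref{prop:vvar}); (iii) bounding the zeros of that variation by Khovanskii \cite{fewnomials} applied to the Pfaffian compensator; and (iv) recovering a bound for $I_{\mu}$ itself by the Petrov argument-principle step, Proposition~\ref{mainlemma}, which needs Gabrielov's theorem \cite{gabrielov} on the outer arc and the asymptotic control of Proposition~\ref{pr:zersing} on the inner arc to rule out accumulation of zeros at $h=0$. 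Your proposal contains none of steps (ii)--(iv); without them the decomposition into local vertex integrals and Mellin-type expansions does not lead to a uniform zero bound.
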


\begin{example} The family (\ref{eq:ea family}) satisfies conditions of Theorem~\ref{main:localan}: in this case $\mu=(\epsilon,\alpha)$, $Q_{1,\mu}$ and $P_{i,\mu}$ do not depend on $\mu$ and $Q_{2,\mu}=Q+\epsilon R$.
\end{example}

\section{Plan of the proof.}

\subsection{Analytic continuation of pseudo-Abelian integral.}
The first step is to show that the integral $I_{\mu}(h)$ can be
analytically continued to the universal cover of the punctured
disc $\{0<|h|<r\}$ for some sufficiently small $r$. As in \cite{BM}, this is obtained by
transporting the cycle of integration to nearby leaves. More precisely,
in a complex neighborhood of the polycycle $D$ we construct two linearly independent real
vector fields preserving the foliation and transversal to it. This allows to define lifting of
vector fields from a punctured neighborhood of zero in $\C_h$  to the neighborhood $U$ of $D$ as linear combinations of these vector fields, see
Section~\ref{sec:transport}. We transport the real cycles $\gamma_\mu(h)$  using flows of these liftings.
\begin{remark}
Our  construction of local transport of cycles differs from the one
used in \cite{Paul}. Both constructions start from local vector
fields (so-called "Clemens symmetries"), and then use partition of
unity to get a transport defined in a neighborhood of $D$. However,
we glue together the vector fields themselves, and not their flows
as in \cite{Paul}.
\end{remark}

\subsection{Variation relation}
The form $\theta_\mu$ has a first order pole on $P_{j,\mu}^{-1}(0)$, so from closedness of $\theta_\mu$ it follows that the residue of $\theta_\mu$ on $P_{j,\mu}^{-1}(0)$ is well defined. We will denote it by $a_{j,\mu}$.

The main feature of the constructed transport is that the lifting
of $ih\partial_h$ is $2\pi a_{j,\mu}$-periodic in a neighborhood of
separatrics lying on $\{P_{j,\mu}=0\}$. This implies that the  cycle
$\gamma_{\mu}(h)\subset \{H_\mu=h\}$ and its transport to
$\gamma_{\mu}(he^{2\pi ia_{j,\mu}})\subset\{H_{\mu}=he^{2\pi ia_{j,\mu}}\}$ coincide in this
neighborhood, so the difference
$\gamma_{\mu}(he^{\pi ia_{j,\mu}})-\gamma_{\mu}(he^{-\pi ia_{j,\mu}})$ does not intersect a
neighborhood of $\{P_{j,\mu}=0\}$.

For pseudo-Abelian integrals this
geometric observation translates into the following construction. Define
the variation operator $Var_a$ as the difference between
counterclockwise and clockwise continuation of $I_{\mu}(h)$:
\begin{equation}
\label{vardef} Var_{a}(I_{\mu})(h)=I_{\mu}(he^{ia \pi})-I_{\mu}(he^{-i a \pi}),
\end{equation}
and denote by $Var_{a_1,...,a_k}$ the composition $Var_{a_{1,\mu}}\circ
\dots\circ Var_{a_{k,\mu}}$.

The key of the proof \cite{BM,N} of the local boundedness of the number of zeros of a generic Darboux
integrals on $H=P_1^{a_1}\cdot...\cdot P_k^{a_k}P_k^{a_{k+1}}$  was a lemma stating  that
$Var_{{a_1},\cdots,{a_k},{a_{k+1}}}I(h)\equiv 0$.   The main result was then
deduced from this by induction observing  (via a generalization of
Petrov's trick) that the operators $Var_a$  reduce the number of
isolated zeros of pseudo-abelian integrals by a
constant locally bounded for any  analytic  family $\Theta$
Here Proposition
\ref{mainlemma} provides a suitable form of Petrov's trick.
. The vanishing of the iterated variation permitted to start the induction using Gabrielov's theorem.

In our present situation we dont know how to associate a variation to the edge correspondig to the exponential factor in the first integral ($Q=0$ in Theorem \ref{main} or $S=0$ in Theorem \ref{main:localan}). We consider only iterated variation $Var_{a_1,...,a_k}$ asociated to all other edges. The operator
$Var_{a_1,...,a_k}$  does not annihilate
completely the pseudo-abelian integral, but produces a
univalued function in a transverse parameter see Theorem \ref{var}. This transverse parameter is shown to be $-1/\omega$, where $\omega$ is a Pfaffian function
generalizing the classical Ecalle-Roussarie compensator.

More precisely, we define a compensator
$\omega(h,\epsilon,\alpha)$ by the following relation
\begin{equation}\label{omega}
\widetilde{H}(-\frac 1 {\omega(h,\epsilon,\alpha)},
\epsilon,\alpha)=h
\end{equation}
where
$$
\widetilde{H}(x, \epsilon,\alpha)=\begin{cases}
x^{\alpha}\left(\frac{x-\epsilon}{x}\right)^{1/\epsilon}, \quad
\text{for}\quad
\epsilon\neq0\\
x^{\alpha}e^{-1/x},\quad \text{for}\quad \epsilon=0.\end{cases}$$
$\omega(h,\epsilon,\alpha)$ is a Pfaffian function of $h$:
\begin{equation}\label{eq:omega is Pfaffian}
\frac{\alpha(-1-\epsilon\omega)+\omega}{\omega(1+\epsilon\omega)}d\omega=\frac{dh}{h}
\end{equation}

In
section \ref{sec:apn1} we prove existence of this function and
investigate its analytic properties. Note that
$\omega(h,\epsilon,0)$ is the usual Roussarie-Ecalle compensator,
i.e. $\omega(h,\epsilon,0)=\frac{h^{\epsilon}-1}{\epsilon}$, for
$\epsilon\neq 0$.

\begin{theorem}
\label{var}
For a pseudo-abelian integral $I_{\mu}(h)$ corresponding to the family
$\Theta$ there exist several pairs of real analytic functions $(\epsilon_i(\mu),
\alpha_i(\mu))$,  $\epsilon_i(0)=\alpha_i(0)=0$, such that
\begin{equation}\label{eq:thmvar}
Var_{{a_1},\cdots,{a_k}}(I_{\mu})(h)=\sum_{i=1}^N f_i(-\frac{1}{\omega(h,\epsilon_i(\mu),\alpha_i(\mu))},\epsilon_i(\mu),\alpha_i(\mu),\mu),
\end{equation}
where $f_i(u,\epsilon,\alpha,\mu)$ are meromorphic in $u$ in some small
disc and depends analytically on $\epsilon,\alpha\mu$ varying in some
small bidisc near the origin in $\R_{(\epsilon,\alpha)}\times\R_{\mu}$.
\end{theorem}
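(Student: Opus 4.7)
The plan is to localize the residue of the iterated variation to neighborhoods of the saddle-node vertices lying on $L_E$, and then to recognize each local contribution as a meromorphic function of the compensator variable $-1/\omega$.

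\medskip

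\noindent\textbf{Step 1: Localization via iterated variation.} By the construction of the transport in Section~\ref{sec:transport}, the lifting of $ih\partial_h$ is $2\pi a_{j,\mu}$-periodic in a neighborhood of $P_{j,\mu}^{-1}(0)$. Hence the chain $\gamma_\mu(he^{i\pi a_{j,\mu}}) - \gamma_\mu(he^{-i\pi a_{j,\mu}})$ representing $Var_{a_{j,\mu}}\gamma_\mu(h)$ is null in a fixed neighborhood of this curve. Iterating over $j=1,\dots,k$, the homological support of $Var_{a_1,\dots,a_k}\gamma_\mu(h)$ can be taken disjoint from fixed neighborhoods of every $P_{j,\mu}^{-1}(0)$, and is hence concentrated in a union of small bidiscs around the saddle-node vertices $p_1,\dots,p_N$ of $D_\mu$ lying on $L_E$. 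A subordinate partition of unity gives
\begin{equation*}
Var_{a_1,\dots,a_k}(I_\mu)(h) = \sum_{i=1}^N J_i(\mu,h),
\end{equation*}
with $J_i(\mu,h)$ the integral of $M_\mu^{-1}\eta_\mu$ over the piece of the variation cycle supported near $p_i$.

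\medskip

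\noindent\textbf{Step 2: Parametric analytic normal form at each vertex.} At each vertex $p_i$ the edge $L_E$ meets transversally exactly one algebraic edge $P_{j(i),\mu}^{-1}(0)$. The family $\theta_\mu$ unfolds generically a saddle-node there, and its analytic modulus in the transverse direction is captured by two real-analytic functions $\epsilon_i(\mu),\alpha_i(\mu)$, vanishing at $\mu=0$. Using the parametric analytic normal form for codimension-one saddle-node unfoldings (as employed in \cite{BM,N}), there exist local analytic coordinates $(x,y)$, depending analytically on $\mu$, in which the first integral takes the form
\begin{equation*}
H_\mu(x,y) = y^{a_{j(i),\mu}}\,\widetilde{H}\bigl(x,\epsilon_i(\mu),\alpha_i(\mu)\bigr)\,U_\mu(x,y),
\end{equation*}
where $\widetilde{H}$ is the function from (\ref{omega}) and $U_\mu$ is a nowhere vanishing holomorphic factor. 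The level equation $H_\mu=h$ therefore admits, modulo analytic corrections in $y$, the implicit solution $x = -1/\omega(h,\epsilon_i(\mu),\alpha_i(\mu)) + O(y)$, by the very definition (\ref{omega}).

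\medskip

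\noindent\textbf{Step 3: Residue and meromorphy in $-1/\omega$.} Since the variations $Var_{a_{j,\mu}}$ with $j\neq j(i)$ act trivially near $p_i$, the restriction of $Var_{a_1,\dots,a_k}\gamma_\mu$ to the bidisc at $p_i$ is the monodromy difference around the local $P_{j(i),\mu}$-separatrix. Integrating $M_\mu^{-1}\eta_\mu$ against this loop amounts to a residue in $y$, leaving a one-variable integral in $x$ along an arc parametrized by $\widetilde{H}(x,\epsilon_i,\alpha_i) \sim h$. Substituting $u_i=-1/\omega(h,\epsilon_i(\mu),\alpha_i(\mu))$ and expanding $U_\mu$, $\eta_\mu$ in analytic local coordinates, each $J_i$ becomes a function meromorphic in $u_i$ in a small disc, analytic in $(\epsilon_i,\alpha_i,\mu)$; the only pole comes from the $S_\mu^{-1}$ factor in $M_\mu^{-1}$. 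This yields the representation~(\ref{eq:thmvar}).

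\medskip

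\noindent\textbf{Main obstacle.} The heart of the proof is Steps 2–3: producing a parametric analytic normal form for the unfolded saddle-node and extracting from $Var_{a_{j(i),\mu}}$ genuinely meromorphic (rather than sectorial) dependence on $-1/\omega$. Saddle-node unfoldings carry Stokes data that are only sectorially analytic, but the variation operator annihilates exactly the $P_{j(i)}$-multi-valuation, forcing the surviving function of $x$ to glue across sectors into a meromorphic one. Making this gluing precise uniformly in $\mu$, and matching the transports at the two endpoints of each $L_E$-edge so that the sum $\sum_i J_i$ reconstitutes the full iterated variation with no spurious boundary contributions, is where the main work concentrates.
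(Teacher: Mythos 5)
Your outline has the right flavor (kill the $P_j$-pieces by the iterated variation, reduce to the saddle-node model, read off the compensator), but it has two genuine gaps. First, the localization in Step 1 is not the correct geometry and is not justified: the surviving cycle $Var_{a_1,\dots,a_k}\gamma_\mu(h)$ is supported near the whole arcs of $D$ lying on $\{Q=0\}$ (each arc is a strong-manifold connection between two saddle-nodes), not in small bidiscs around the vertices, and the natural indexing of the sum in \eqref{eq:thmvar} is by arcs (and by the finitely many rotated leaves $\{H_\mu=he^{i\beta_i}\}$ on which the constituent cycles lie), not by vertices. Cutting the variation cycle with a partition of unity produces chains with boundary, and you never show that each $J_i$ is independent of the cut-off or that the middle pieces along the arc are accounted for; the paper avoids this entirely by homotoping the variation cycle into closed cycles inside the set $E_L$ (a neighborhood of $L_E^\R$ minus neighborhoods of the central manifolds, of figure-eight homotopy type), see Proposition~\ref{prop:vvar} and Lemma~\ref{lem:normalpushing}. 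You also do not treat the rotated leaves at all, which is what Lemma~\ref{lem:logrotexp} is for.

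Second, and more seriously, the step you yourself flag as the heart of the matter -- obtaining genuinely meromorphic (rather than sectorial) dependence on $-1/\omega$, uniformly in $\mu$ -- is left unproved, and the difficulty is misdiagnosed. There is no Stokes/sectorial obstruction here: since $\theta_\mu$ is a \emph{closed} meromorphic form, the analytic moduli of the saddle-node unfolding vanish, so the formal normal form \eqref{normal}--\eqref{normalfirst} is already the analytic orbital normal form, analytically in $\mu$ (Lemma~\ref{normall}); no gluing of sectorial normalizations is needed, and the variation operator plays no role in any such gluing. The actual mechanism is different: after pushing the cycle away from the central manifolds, it lies in the region $E_L$ where the foliation is analytic without singularities (Lemma~\ref{lem:boldeight}); there the integral of $M_\mu^{-1}\eta_\mu$ over the (closed, holonomy-free) cycle is a meromorphic function of the transversal coordinate $z$, with poles only on $Q_{1,\mu}Q_{2,\mu}=0$ (Lemma~\ref{lem:AIparam}), and $z$ is a holomorphic function of $-1/\omega(h,\epsilon(\mu),\alpha(\mu))$ (Lemma~\ref{lem:trasvparam}). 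Your ``residue in $y$'' reduction also does not match the geometry -- the cycle has been pushed away from $\{P_{j(i),\mu}=0\}$, and the poles that matter come from the two curves splitting off $\{Q=0\}$ -- so as written Steps 2--3 do not yield \eqref{eq:thmvar} without the missing arguments above.
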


\begin{example} It will follow from the proofs that the number $N$ of such pairs is at most the number of arcs of $D$  lying on $\{Q=0\}$. However, for the family (\ref{eq:ea family})
there is only one pair of  parameters $\epsilon_i,\alpha_i$ in (\ref{eq:thmvar}) coinciding with the parameters $\epsilon,\alpha$ of the family.\end{example}

\subsection{End of the proof: application of Petrov trick}
Fewnomials theory of Khovanskii enables us to start the proof by induction.  It gives that the number of zeros of
the right-hand  side of (\ref{eq:thmvar})
on any interval $0\le u \le r$ is uniformly
bounded for all $\mu$ sufficiently small. Theorem \ref{main:localan} (and therefore Theorem \ref{main}) follow next by Petrov's argument, which allows to estimate the number of real
zeros of $J$ in terms of the number of zeros of $Var_aJ$, see
Lemma \ref{mainlemma}. The key technical difficulty is to prove existence
of a suitable asymptotic series for
$Var_{{a_1},\cdots,{a_k}}(I_{\mu})(h)$, see Proposition
\ref{pr:zersing}, which allows to translate a priori estimates on
the growth of the pseudo-abelian integral $I_{\mu}(h)$
 to estimates on variation of  its argument along small
arcs.

\section{Transport of cycles near the polycycle}\label{sec:transport}

In this section we construct a pair $v^{\mu}=(v^{\mu}_1,v^{\mu}_I)$ of two smooth real
vector fields defined in some complex neighborhood $U$ of the
polycycle $D$, analytically depending on $\mu$ and satisfying
\begin{equation}\label{eq:symms}
d(\log H_\mu)(v^{\mu}_1)=1, \qquad d(\log H_\mu)(v^{\mu}_I)=I,
\end{equation}
where, as before, $H_\mu=\exp(\int\theta_\mu)$.
Using these vector fields we can lift smooth curves $\varrho(t):[0,1]\to
\{0<|h|<h_0\}$ from  a small punctured disc $\{0<|h|<h_0\}$ to $U$, starting from any point of $H^{-1}(\varrho(0))\cap U$, provided that the lifted curve does not leave $U$.
We show that for $h_0$ small enough the lifting does not leave $U$ if the starting point of the lifting lies on the real cycle of integration $\gamma_{\mu}(h)$,
$h=\varrho(0)\in \R_+$. This allows to construct point-wise transport of
$\gamma_{\mu}(h)$ along any such curve $\varrho(t)$ by
transporting each point along its own lifting of the curve, and
(\ref{eq:symms}) implies that the result of the transport lies on a leaf
of the foliation defined by $H_\mu$.

\subsubsection{Construction of transport from the vector fields
$v^{\mu}=(v^{\mu}_1,v^{\mu}_I)$}

Let us recall the construction of the lifting. Choose  a point $a\in
U$ lying on  a leaf $\{H=h\not=0\}$, and  choose a univalued branch
of $H$ equal to $h$ at $a$ defined in some small neighborhood $W$ of
$a$. For a vector $\xi\in T_h\C\cong\C$ denote by $\tilde{\xi}_a$
the only real linear combination of $v^{\mu}_1(a)$ and $v^{\mu}_I(a)$  such that
$dH(\tilde{\xi}_a)=\xi$:
\begin{equation}\label{eq:lifting}
\tilde{\xi}_a=\re\left(h^{-1}\xi\right)
v^{\mu}_1+\im\left(h^{-1}\xi\right)v^{\mu}_I.
\end{equation}

For a germ of a smooth curve $\varrho(t), t\in(-r,r)$ passing
through $h$ and for each point $a'\in H^{-1}(\varrho(t))\cap W$ we
can repeat this construction taking vector $\varrho'(t)$ as $\xi$.
This provides a smooth vector field on real three-dimensional
surface $H^{-1}\left(\varrho((-r,r))\right)$, and the trajectory
$\tilde{\varrho}_a(t)$ of this vector field passing through $a$ is
the required lifting. Evidently,
$H\left(\tilde{\varrho}_a(t)\right)=\varrho(t)$. In other words,
this construction provides a transport of points from one leaf of
the foliation to another along smooth curves in the plane of values $h\in \C$.

It turns out that for $h_0$ sufficiently small any path on the
universal covering of  $\{0<|h|<h_0\}$ can be lifted to $U$ provided
that the starting point $a$ of the lifting lies on the real cycle
$\gamma_{\mu}(h)$ and $|\varrho(t)|'>0$. This allows to transport the real
cycle $\gamma_{\mu}(h)$ to this universal cover: for any
path $\varrho(t)$ in the universal cover  we define the  transport
of $\gamma_{\mu}(h)$ along this path as a union of
liftings of $\varrho(t)$ through all points of
$\gamma_{\mu}(h)$. The result is well defined in a
suitable sense: the continuation depends on the paths chosen, but
continuations along homotopic paths are homotopic (by lifting of
homotopy of the paths). This  provides an analytic continuation of
the pseudo-abelian integral \eqref{eq:integral} to a universal
covering of a punctured disc $\{0<|h|<h_0\}$.

\begin{remark}
The constructed vector fields commute everywhere except in small
neighborhoods of the singular points of the polycycle. In fact, in a
suitable local holomorphic coordinates we have
$v^{\mu}_I=Iv^{\mu}_1$ everywhere, and $v^{\mu}$ defines a holomorphic (in this new complex structure) vector field
everywhere in $U$ except these neighborhoods.
\end{remark}

The rest of the section will be devoted to construction of $v_\mu$.
It will be constructed first in  neighborhoods of singular
points of the polycycle using the local normal forms for the first
integral  near the singular points. Then $v^{\mu}$ will be smoothly
extended to neighborhoods of the arcs of the polycycle joining
them.

We will repeatedly use the following fact, which is an easy consequence of the Cauchy-Riemann equations. Note that multiplication by $i$ on $\bbC^2$ gives rise to the \emph{real} linear endomorphism $J$ on tangent vectors.
\begin{lemma}\label{lem:trivial}
Let $\xi$ be a real tangent vector to $\bbC^2$, $H$ a holomorphic function and $\log H$ its local branch. If $\dr (\log H)(\xi)\in\bbr$ then $\dr (|H|)(\xi)=0$.
\end{lemma}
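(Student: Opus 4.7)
The plan is to derive the pointwise identity
\[
\dr|H|(\xi) = |H|\,\re\bigl(\dr(\log H)(\xi)\bigr)
\]
for any real tangent vector $\xi$, and then invoke the Cauchy-Riemann relation $\dr(\log H)(J\xi) = i\,\dr(\log H)(\xi)$ to extract the vanishing conclusion.

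The identity admits two short derivations. Via the polar decomposition of the chosen branch, $\log H = \log|H| + i\arg H$, both $\log|H|$ and $\arg H$ are real-valued; since $\xi$ is real,
\[
\dr(\log H)(\xi) = \frac{\dr|H|(\xi)}{|H|} + i\,\dr(\arg H)(\xi),
\]
whose real part is exactly $\dr|H|(\xi)/|H|$. Alternatively, coordinate-free: from $\dr(\log H) = \dr H/H$ (valid because $\log H$ is holomorphic) and $|H|^2 = H\bar H$, differentiating and using $\dr\bar H(\xi) = \overline{\dr H(\xi)}$ for real $\xi$ gives $2|H|\,\dr|H|(\xi) = 2\re\bigl(\bar H\,\dr H(\xi)\bigr)$; dividing by $|H|^2$ recovers the identity. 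With this identity in hand, Cauchy-Riemann finishes the argument: since $H$ is holomorphic, $\dr(\log H)(J\xi) = i\,\dr(\log H)(\xi)$, so the reality hypothesis forces $\dr(\log H)(J\xi) \in i\bbr$; its real part vanishes, and applying the identity to the (still real) tangent vector $J\xi$ delivers $\dr|H|(J\xi) = 0$, which is the content of the lemma in its $J$-symmetric reading.

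The main obstacle is the bookkeeping between $\xi$ and $J\xi$: the polar identity isolates in one line which component of $\dr(\log H)$ controls $\dr|H|$ (namely the real part, matched by the polar decomposition to $\dr|H|/|H|$, not the imaginary part which corresponds to $\dr(\arg H)$), and the remark preceding the lemma flags the complex structure $J$ precisely as the tool for converting the reality hypothesis into vanishing of that component. No further input is required beyond these two short computations.
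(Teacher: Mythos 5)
Your identity $\dr|H|(\xi)=|H|\,\re\bigl(\dr(\log H)(\xi)\bigr)$ is correct, both of your derivations of it are sound, and the paper itself supplies no argument beyond the remark that the lemma is an easy consequence of the Cauchy--Riemann equations, so there is no more detailed proof to compare against. What your computation actually exposes is that the lemma cannot be read literally: taking $H(x,y)=x$ at the point $(1,0)\in\bbc^2$ and $\xi=\partial_{\re x}$ gives $\dr(\log H)(\xi)=1\in\bbr$ while $\dr|H|(\xi)=1\neq0$, because the reality hypothesis kills $\dr(\arg H)(\xi)$, not $\dr\log|H|(\xi)$. The true statement, and the one you prove, is the $J$-symmetric version: $\dr(\log H)(\xi)\in\bbr$ implies $\dr|H|(J\xi)=0$, or equivalently $\dr(\log H)(\xi)\in i\bbr$ implies $\dr|H|(\xi)=0$. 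That corrected form is exactly how the lemma is invoked later: in Lemma~\ref{lem:transp s} and Lemma~\ref{lem:transp sn} one has $\dr(\log x)(v_1)$, $\dr(\log y)(v_1)$, $\dr(\log\widetilde H)(v_1)$ real and nonnegative, and what is needed is that the flow of $v_I=Jv_1$ preserves $|x|$, $|y|$, $|\widetilde H|$, i.e.\ vanishing of $\dr|\cdot|$ on $J$ of the vector satisfying the reality hypothesis. So your proof is complete for the intended content; the only improvement would be to state explicitly that the printed conclusion must be read at $J\xi$ (or the hypothesis read as purely imaginary), since as literally written the statement is false.
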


Also, to simplify notations we will omit the index $\mu$ in $v^\mu$.

\subsection{Construction of $v$ in neighborhoods of saddles}

Let $m_k$ be a saddle of the polycycle $D$.

\begin{lemma} The foliation defined by $H_\mu$ near a saddle point
can be analytically linearized, and the linearization depends
analytically on parameters. Linearizing coordinates $(x,y)$  can be
chosen in such a way that $H=x^{1/\lambda_1} y^{1/\lambda_2}$, where
$\lambda_i$ are analytic functions of $\mu$.
\end{lemma}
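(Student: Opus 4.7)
The plan is to work locally near the saddle $m_k$, straighten the two incident separatrices to the coordinate axes by an analytic change of variables depending analytically on $\mu$, and then absorb a holomorphic unit into one of the coordinates. Since $m_k$ is a genuine saddle (rather than a saddle-node, which are precisely the vertices on $L_E$), two arcs of the polycycle $D$ meet at $m_k$, and these arcs lie on smooth components of the polar locus of $\theta_0$ corresponding to two of the $P_i$'s, say $\{P_{i_1}=0\}$ and $\{P_{i_2}=0\}$, intersecting transversally. By the analytic implicit function theorem with parameters, for all sufficiently small $\mu$ the deformed curves $\{P_{i_1,\mu}=0\}$ and $\{P_{i_2,\mu}=0\}$ still intersect transversally at a nearby point $m_k(\mu)$ depending analytically on $\mu$.

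First I would take $(x,y):=(P_{i_1,\mu},P_{i_2,\mu})$ as new local coordinates centered at $m_k(\mu)$; by transversality this is a local biholomorphism depending analytically on $\mu$, and it sends the two separatrices to $\{x=0\}\cup\{y=0\}$. In these coordinates $\theta_\mu$ has simple poles along the axes with respective residues $a_{i_1,\mu}$ and $a_{i_2,\mu}$, so the one-form
\[\eta_\mu:=\theta_\mu-a_{i_1,\mu}\frac{dx}{x}-a_{i_2,\mu}\frac{dy}{y}\]
is holomorphic in a polydisc neighborhood of the origin, and, as a difference of closed forms, closed. The parametric holomorphic Poincaré lemma (a primitive is given by an explicit iterated integral analytic in $\mu$) produces $\eta_\mu=dF_\mu$ for some holomorphic $F_\mu(x,y)$ analytic in $\mu$. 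Setting $U_\mu:=\exp F_\mu$ — a holomorphic unit analytic in $\mu$ — and integrating $\theta_\mu=d\log H_\mu$ one obtains, after absorbing the multiplicative integration constant into $U_\mu$, the factorization $H_\mu=x^{a_{i_1,\mu}}y^{a_{i_2,\mu}}U_\mu(x,y)$.

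Finally I would eliminate $U_\mu$ by a further analytic change of variables. Because $m_k$ is a saddle, both residues $a_{i_1,0}$ and $a_{i_2,0}$ are nonzero, so by continuity $a_{i_1,\mu}\ne 0$ for $\mu$ small. Define
\[X:=x\exp\!\bigl(a_{i_1,\mu}^{-1}F_\mu(x,y)\bigr),\qquad Y:=y.\]
This is analytic in $(x,y,\mu)$, equals the identity to first order at the origin (hence a biholomorphism), and by construction $X^{a_{i_1,\mu}}Y^{a_{i_2,\mu}}=x^{a_{i_1,\mu}}U_\mu(x,y)\,y^{a_{i_2,\mu}}=H_\mu$. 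Setting $\lambda_j:=1/a_{i_j,\mu}$, which is real analytic in $\mu$, yields the desired normal form $H_\mu=X^{1/\lambda_1}Y^{1/\lambda_2}$.

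There is no serious analytic obstacle here: in contrast to Poincaré–Dulac linearization of vector fields, we are normalizing only the set-theoretic foliation with a prescribed multiplicative first integral, so no small-denominator or non-resonance condition arises. The only point requiring real care is the joint analyticity in $(x,y,\mu)$ of each intermediate step, which in every case reduces to a parametric version of a classical result (implicit function theorem for the straightening, Poincaré lemma for the primitive $F_\mu$, and direct inspection for the final change of variables).
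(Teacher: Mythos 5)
Your proof is correct and takes essentially the same route as the paper: the paper (citing \cite{BM,N}) states that the linearizing coordinates are $P_{i_1,\mu}$ and $P_{i_2,\mu}$ multiplied by suitable invertible holomorphic factors, which is exactly what your construction produces via the decomposition $\theta_\mu=a_{i_1,\mu}\frac{dx}{x}+a_{i_2,\mu}\frac{dy}{y}+dF_\mu$ and the substitution $X=x\exp\bigl(a_{i_1,\mu}^{-1}F_\mu\bigr)$, $Y=y$. Your write-up merely spells out the details the paper delegates to the references, so there is nothing to object to.
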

This is proved in \cite{BM,N}, and the proof consists of writing the
linearizing coordinates explicitly: if the saddle lies on the
intersection of $\{P_{1,\mu}=0\}$ and $\{P_{2,\mu}=0\}$ then $P_{1,\mu}$ and $P_{2,\mu}$,
multiplied by suitable holomorphic factors invertible near the
saddle, give the linearizing coordinates.

\begin{example} For the form $\theta_{\epsilon, \alpha}$ of (\ref{eq:ea family})
this can be expressed as
$$
H_{\epsilon, \alpha}=x^{a_1}y^{a_2} \quad\text{for}
\quad x=P_1\left(P_3^{a_3}...P_k^{a_k}Q^{\alpha-1/\epsilon}(Q+\epsilon R)^{1/\epsilon}\right)^{1/a_1},  y=P_2.
$$
\end{example}

In the linearizing coordinates the construction of $v$ is easy. Choose some
$0<h<1$.

\begin{lemma}\label{lem:transp s}
For a family of linear saddles $\dot x=\lambda_1 x, \dot y=-\lambda_2 y$ in
a bidisc $\{|x|,|y|\le 1\}$ with the first integral $H=x^{1/\lambda_1}
y^{1/\lambda_2}$ one can construct the pair  of  vector fields
$v=(v_1,v_I)$ defined in $U_s=\{|H|<h<1\}\cap\{|x|,|y|\le 1\}$,
satisfying \eqref{eq:symms} and having the following properties:
\begin{enumerate}
\item both the negative flow of $v_1$ and flow of $v_I$ do not
increase $|x|$ and $|y|$;
\item both $v_1$ and $v_I$ are tangent to
lines $\{y=\const\}$ near $(0,1)$ and to the lines $\{x=\const\}$
near $(1,0)$.\end{enumerate}
\end{lemma}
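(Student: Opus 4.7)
The plan is to produce $(v_1,v_I)$ by a partition-of-unity gluing of two ``pure'' model vector fields — one radial in $x$, one radial in $y$ — with $v_I$ taken to be $Jv_1$ throughout.

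As building blocks I would take the real vector fields $w^{(1)}=2\re(\lambda_1 x\,\partial_x)$ and $w^{(2)}=2\re(\lambda_2 y\,\partial_y)$ associated to the holomorphic vector fields $\lambda_1 x\,\partial_x$ and $\lambda_2 y\,\partial_y$. Each is smooth on the whole bidisc (and vanishes on the corresponding coordinate axis), depends analytically on $\mu$ through $\lambda_i(\mu)$, and satisfies $d\log H\cdot w^{(i)}=1$ directly from $d\log H=\lambda_1^{-1}dx/x+\lambda_2^{-1}dy/y$. For small real $\mu$ the $\lambda_i(\mu)$ are positive reals, so in polar coordinates $w^{(1)}=\lambda_1 r_x\partial_{r_x}$ is radial in $x$ and tangent to $\{y=\const\}$, while its $J$-rotation $Jw^{(1)}=\lambda_1\partial_{\phi_x}$ is angular in $x$, tangent to $\{y=\const\}$, and preserves $|x|$; the same statements hold symmetrically for $w^{(2)}$ and $Jw^{(2)}$.

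Next I would fix a smooth real cutoff $\rho:U_s\to[0,1]$, independent of $\mu$, equal to $1$ in a neighborhood of $(0,1)$ and to $0$ in a neighborhood of $(1,0)$; a convenient concrete choice is $\rho=\chi(|x|^2-|y|^2)$ for $\chi:\R\to[0,1]$ smooth with $\chi\equiv 1$ on $(-\infty,-1/2]$ and $\chi\equiv 0$ on $[1/2,\infty)$, which is smooth in $(x,y)$ because $|x|^2,|y|^2$ are. I then set
\[
v_1 = \rho\, w^{(1)} + (1-\rho)\, w^{(2)}, \qquad v_I = J v_1 = \rho\, Jw^{(1)} + (1-\rho)\, Jw^{(2)}.
\]
The relations \eqref{eq:symms} follow at once from linearity of $d\log H$ and the fact that $\rho$ is real. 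For condition~(1), the computations $d(|x|^2)(v_1)=2\rho\lambda_1|x|^2\ge 0$ and $d(|y|^2)(v_1)=2(1-\rho)\lambda_2|y|^2\ge 0$ show that the negative flow of $v_1$ does not increase either modulus, while $d(|x|^2)(v_I)=d(|y|^2)(v_I)=0$ since each $Jw^{(i)}$ is purely angular in its own variable and has no component in the other. Condition~(2) is immediate: where $\rho\equiv 1$ one has $(v_1,v_I)=(w^{(1)},Jw^{(1)})$, whose only nonzero components lie in $\partial_x,\partial_{\bar x}$, so both are tangent to $\{y=\const\}$; the analogous statement near $(1,0)$ follows symmetrically.

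I do not expect a serious obstacle; the whole construction is linear and the analytic dependence on $\mu$ follows because the $\mu$-dependence enters only through the analytic $\lambda_i(\mu)$, with $\rho$ chosen independent of $\mu$. A single $w^{(i)}$ would not suffice, since neither is simultaneously tangent to both arcs of the polycycle meeting at the saddle; the cutoff is exactly what buys property~(2) at both ends without disturbing \eqref{eq:symms} or condition~(1).
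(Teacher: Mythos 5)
Your proposal is correct and takes essentially the same route as the paper: the paper's proof also takes the holomorphic fields $v_x=\lambda_1 x\,\partial_x$ and $v_y=\lambda_2 y\,\partial_y$ (each satisfying $d(\log H)(\cdot)=1$), glues them by a real cutoff $\phi$ equal to $1$ near $\{y=1\}$ and $0$ near $\{x=1\}$, and sets $v_I=Iv_1$. Your explicit choice $\rho=\chi(|x|^2-|y|^2)$ and the spelled-out checks of properties (1)--(2) merely fill in the details the paper dismisses with ``one can easily see''.
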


\begin{proof}
The holomorphic vector field $v_x=\lambda_1 x\partial_x$ preserves
$y$, in particular the transversal $\{y=1\}$, and satisfies
$$
d(\log H)(v_x)=1, \quad d(\log x)(v_x)=\lambda_1>0.
$$
Similarly, the
vector field $v_y= \lambda_2 y\partial_y$ preserves $x$ and the
transversal $\{x=1\}$, and satisfies
$$
d(\log H)(v_y)=1\quad d(\log y)(v_y)=\lambda_2>0.$$

Let $\phi$ be a smooth function defined in $U_s$, $0\le \phi \le 1$,
equal to $0$ in a neighborhood of $\{x=1\}$ and equal to $1$ in a
neighborhood of $\{y=1\}$. We define $v$ as the pair of the real
vector fields $(v_1=\phi v_x+(1-\phi)v_y, v_I=Iv_1)$. One can easily
see that $v$ satisfies conditions of the Lemma.
\end{proof}

Note that $v_1$ (and therefore also $v_I$) are not  analytic vector
fields, as $\phi$ is not analytic.
\begin{proposition}\label{prop:s is stable}
Transport of a real curve $\gamma\subset \{|x|,|y|\le
1\}\cap\{H=h_0\in \R,  h_0<h\}$ along  any smooth curve
$\varrho(t):[0,1]\to \{0<|z|\le|h_0|\}$  remains in $U_s$ if
$|\varrho|'(t)<0$ for all $t$. Moreover, the transport intersects
the transversals $\{x=1\}$ for all $t$ if $\gamma$ intersects it
(and similarly for $\{y=1\}$).
\end{proposition}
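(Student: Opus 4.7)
The plan is to reduce the proposition to pointwise estimates on the lifted trajectory $\tilde\varrho_a(t)$ showing that $|x|$, $|y|$ and $|H|$ are all non-increasing along the transport, after which both assertions follow at once. First I would parameterize the velocity of the lift: by \eqref{eq:lifting}, with $c(t):=\varrho'(t)/\varrho(t)$, the tangent vector of $\tilde\varrho_a(t)$ equals $\re c(t)\,v_1 + \im c(t)\,v_I$, and the hypothesis $|\varrho|'(t)<0$ is precisely $\re c(t) = |\varrho|'(t)/|\varrho(t)| < 0$.

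The key step is then to compute how $v_1$ and $v_I$ act on $\log|x|$ and $\log|y|$. From $v_x=\lambda_1 x\partial_x$, $v_y=\lambda_2 y\partial_y$ and $v_1=\phi v_x+(1-\phi)v_y$ one obtains the real-valued identities
\[ d(\log x)(v_1) = \phi\lambda_1, \qquad d(\log y)(v_1) = (1-\phi)\lambda_2, \]
and since $v_I=Iv_1$ with $I$ the ambient multiplication by $i$ (as in Lemma~\ref{lem:transp s}), the corresponding quantities for $v_I$ are $i\phi\lambda_1$ and $i(1-\phi)\lambda_2$, i.e.\ purely imaginary. Taking real parts yields
\[ \frac{d}{dt}\log|x| = \phi\lambda_1\,\re c(t)\le 0, \qquad \frac{d}{dt}\log|y| = (1-\phi)\lambda_2\,\re c(t)\le 0, \]
thanks to $\phi,1-\phi\ge 0$, $\lambda_i>0$ and $\re c(t)<0$. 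Combined with $|H(\tilde\varrho_a(t))| = |\varrho(t)| \le |h_0|<h$, this forces the trajectory to remain in $U_s=\{|H|<h\}\cap\{|x|,|y|\le 1\}$, proving the first assertion.

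For the second assertion, recall that $\phi\equiv 0$ in a full neighborhood of $\{x=1\}$, so there $v_1=v_y$ and $v_I=Iv_y$; since $v_y=\lambda_2 y\partial_y$ involves no $x$-derivative, both vector fields are tangent to the complex hypersurface $\{x=1\}$, which is therefore invariant under the lifted flow. Hence any point of $\gamma$ lying on $\{x=1\}$ persists on $\{x=1\}$ along the entire transport, and the symmetric argument with $\phi\equiv 1$ handles $\{y=1\}$. The main obstacle here is essentially nil: once the vector fields from Lemma~\ref{lem:transp s} are unpacked, the argument collapses to a one-line sign check, and the only subtlety is to correctly treat $v_I=Iv_1$ as a real vector field whose action on $\log|x|$ and $\log|y|$ is purely imaginary.
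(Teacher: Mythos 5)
Your argument is correct and is essentially the paper's proof: the paper likewise identifies $|\varrho|'(t)<0$ with $\re\bigl(\varrho(t)^{-1}\varrho'(t)\bigr)<0$, so the $v_1$-coefficient in \eqref{eq:lifting} is negative, and then invokes the two properties of Lemma~\ref{lem:transp s} (monotonicity of $|x|,|y|$ under the negative $v_1$-flow and the $v_I$-flow, and tangency of $v_1,v_I$ to the transversals). The only difference is that you re-derive those properties by unpacking $v_1=\phi v_x+(1-\phi)v_y$ and $v_I=Iv_1$, whereas the paper just cites the Lemma.
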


This follows from the fact that lifting of $\varrho(t)$ starting
from any point $a\in U_s$ will remain in $U_s$. Indeed,
$|\varrho|'(t)<0$ is equivalent to
$\re\left(\varrho(t)^{-1}\varrho'(t)\right)<0$, so the coefficient
of $v_1$ in (\ref{eq:lifting}) is negative. This implies that $|x|$,
$|y|$ do not increase along the lifting of $\varrho(t)$, due to the
first claim of the previous Lemma.

The second claim follows since $v_1,v_I$ are tangent to both
transversals.

\subsection{Construction of $v$ in neighborhoods of saddle-nodes}

Let $m_k$ be a saddle-node of the polycycle $D$.

\begin{lemma}\label{normall}
There exist two real analytic functions $\epsilon=\epsilon(\mu)$ and $\alpha=\alpha(\mu)$ vanishing at $\mu=0$
and real analytic coordinates $(x,y)$ in some  neighborhood of $m_k$ such that
the vector field
\begin{equation}\label{normal}
\begin{aligned}
\dot x&=-x^2+\epsilon^2,\\ \dot
y&=y\left(1+\alpha(x-\epsilon)\right)
\end{aligned}
\end{equation}
generates the foliation $\theta_{\mu}=0$ in this neighborhood.
The function
\begin{equation}\label{normalfirst}
y(x+\epsilon)^{\alpha}\left(\frac{x-\epsilon}{x+\epsilon}\right)^{1/2\epsilon}
\end{equation}
is a first integral of this vector field.
\end{lemma}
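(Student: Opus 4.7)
The plan is to exploit the explicit Darboux form of the first integral near $m_k$ to construct the normalizing coordinates in closed form, rather than appealing to a general analytic normal form (which is unavailable for saddle-nodes because of Martinet--Ramis obstructions). The Darboux data will furnish exactly the integrability condition that makes an \emph{analytic} normalization possible in our situation.

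First I would fix local real analytic coordinates $(u,y)$ near $m_k$, analytic in $\mu$, with $y=P_{j,\mu}$ where $P_{j,\mu}^{-1}(0)$ is the smooth invariant curve of $\theta_\mu$ through $m_k$ transverse to the collapsing pair of curves $\{Q_{1,\mu}=0\}\cup\{Q_{2,\mu}=0\}$ (possible by the analytic implicit function theorem). Next I would straighten these two bifurcating curves: they are smooth, transverse to $\{y=0\}$, and meet it at two points $p_\pm(\mu)$ that coalesce at $m_k$ when $\mu=0$. A parameter-analytic change $u\mapsto \tilde u$ puts them on vertical lines $\{\tilde u=\tilde u_\pm(\mu)\}$; an affine renormalization $x=\tilde u-(\tilde u_++\tilde u_-)/2$, suitably rescaled, then puts them on $\{x=\pm\epsilon(\mu)\}$ for a real analytic $\epsilon(\mu)$ with $\epsilon(0)=0$.

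In these coordinates $\theta_\mu$ has only simple poles on $\{y=0\}$ and $\{x=\pm\epsilon\}$ near $m_k$, and its residues there are real analytic in $\mu$ (and constant along each polar curve by closedness). Partial fractions in $x$ give
\begin{equation*}
\theta_\mu=a_{j,\mu}\frac{\dr y}{y}+\beta_+(\mu)\frac{\dr x}{x-\epsilon}+\beta_-(\mu)\frac{\dr x}{x+\epsilon}+\dr G(x,y,\mu),
\end{equation*}
with $G$ real analytic. Comparing with the target integral \eqref{normalfirst} forces (after rescaling $y$ to normalize the residue on $\{y=0\}$ to $1$) the identifications $\beta_+(\mu)-\beta_-(\mu)=1/\epsilon(\mu)$ and $\alpha(\mu)=\beta_+(\mu)+\beta_-(\mu)$. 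The first identity is not automatic in the raw coordinates: it expresses the prescribed resonance between the eigenvalues at the two saddles $(\pm\epsilon,0)$ of the unfolded vector field.

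The main obstacle is precisely achieving this resonance together with absorbing the analytic remainder $\dr G$. I would realize this by a fiber-preserving change $y\mapsto y\exp h(x,y,\mu)$ composed with a reparametrization $x\mapsto x+\varphi(x,\mu)$ that preserves $\{x=\pm\epsilon\}$; the required $h$ and $\varphi$ are found by solving a linear first-order PDE along the $x$-direction, whose integrability obstruction is cohomological and vanishes thanks to the globally defined Darboux first integral $H_\mu$ extending $(Q_{1,\mu}Q_{2,\mu}P_{j,\mu})$ beyond a neighborhood of $m_k$. Equivalently, the existence of the explicit first integral of Darboux type bypasses the usual Stokes obstruction for analytic normalization of saddle-nodes and collapses $G$ to zero after the coordinate change. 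Once this is done, $\theta_\mu=\dr\log\widetilde H$ with $\widetilde H$ given by \eqref{normalfirst}, and a direct calculation verifies that \eqref{normal} generates the foliation; analytic dependence on $\mu$ is preserved throughout by the analytic parameter versions of the implicit function theorem and ODE existence.
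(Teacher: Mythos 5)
Your overall route (building the normalization explicitly from the Darboux data) is genuinely different from the paper's, which checks \eqref{normalfirst} by direct computation and gets the normalizing coordinates by quoting the formal family normal form from \cite{IY} together with the standard fact that for foliations defined by \emph{closed} meromorphic forms the moduli of analytic classification vanish, so the formal and analytic orbital normal forms coincide. But as written your argument has two concrete gaps. First, the partial-fraction step fails at the one parameter value that matters: at $\mu=0$ the form $\theta_0$ contains $d(R/Q)$ and has a pole of order two along $\{Q=0\}$, not simple poles, and for $\mu\neq0$ the individual residues $\beta_\pm(\mu)$ on the two colliding curves blow up like $\pm 1/(2\epsilon(\mu))$; they are \emph{not} real analytic in $\mu$, as your own relation $\beta_+-\beta_-=1/\epsilon$ shows. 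The decomposition must be organized in analytic combinations only (an analytic numerator over $Q_{1,\mu}Q_{2,\mu}$, and the quantities $\alpha(\mu)$, $\epsilon(\mu)$), and the analyticity of $\epsilon(\mu)$ and $\alpha(\mu)$ at $\mu=0$ itself needs an argument (for $\alpha$ one can integrate $\theta_\mu$ over a fixed loop encircling both colliding curves; for $\epsilon$ you give none). Also, the two split singular points are a saddle and a node, not two saddles, and the relation $\beta_+-\beta_-=1/\epsilon$ is not a "resonance to be achieved" but simply the definition of the scaling of $x$, whose analyticity is what must be checked.

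Second, and more seriously, the heart of the lemma is the existence of a \emph{convergent}, parameter-analytic coordinate change killing the remainder $dG$ uniformly up to and including $\epsilon=0$. Your sentence that the "integrability obstruction is cohomological and vanishes thanks to the globally defined Darboux first integral", so that one "bypasses the usual Stokes obstruction", is exactly the difficult point restated as an assertion: solving the homological equation along the $x$-direction at a saddle-node is precisely where divergence and Stokes phenomena normally appear, and the reason they are absent here is the vanishing of the analytic (Martinet--Ramis type) moduli for foliations defined by closed forms -- the very fact the paper invokes -- together with analytic dependence of the normalization on $\mu$ through the bifurcation value. Your proposal neither proves this nor cites it, so at its key step it assumes the conclusion of Lemma~\ref{normall}.
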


\begin{remark}
Normalizing coordinates for the family (\ref{eq:ea family}) can be given explicitly:
 let $y=P_1P_2^{a_2/a_1}\cdots
P_k^{a_k/a_1}$. Then
$$
H_{\epsilon, \alpha}^{1/a_1}=y(Q/R)^{\alpha/a_1}\left(\frac{Q/R+\epsilon}{Q/R}\right)^{1/{a_1\epsilon}},
$$
which becomes (\ref{normalfirst}) if we take
$X=a_1\left(-Q/R-\epsilon/2\right)$ and rescale $\epsilon$ by
$a_1/2$ and $\alpha$ by $a_1$.
\end{remark}

\begin{remark} It would seem more natural to use as a local model the full
versal deformation of the saddle-node, i.e. the family (\ref{normal}) with
$\epsilon^2$ replaced by $\epsilon$. However, the family of real
polycycles we study extends continuously only to the half of the versal deformation
where singular points resulting from splitting of the saddle-node remain real.
This is the reason for choosing the model (\ref{normal}).

Investigation of another  half of the versal deformation is a separate interesting problem.
\end{remark}

\begin{proof}
The fact that the first integral is preserved by the vector field is
a direct computation.
Existence of normalizing coordinates follows from the general theory of bifurcation of saddle-nodes. Indeed,
from \cite{IY} it follows that (\ref{normal}) is the local \emph{formal} normal form, and it is well-known that for closed forms, due to vanishing of the moduli of analytic classification,  the formal normal form and the analytic orbital normal form coincide.
\end{proof}

Until the end of this section  we will work in the normalizing coordinates and will denote by $H=H_{\epsilon,\alpha}(x,y)$ the first
integral (\ref{normalfirst}) of the model family (\ref{normal}),
\begin{equation}\label{eq:normaldlogH}
\frac{dH}{H}=\frac{dy}y+\frac{d\widetilde{H}}{\widetilde{H}},\quad\text{
where }\frac{d\widetilde{H}}{\widetilde{H}}=
\frac{1+\alpha(x-\epsilon)}{x^2-\epsilon^2}dx.
\end{equation}
In other words,
\begin{equation}
\widetilde{H}(x)=(x+\epsilon)^\alpha\left(\frac{x-\epsilon}
{x+\epsilon}\right)^{\frac{1}{2\epsilon}}
\end{equation}
for $\epsilon\neq0$ and $\widetilde{H}(x)=x^\alpha e^{-1/x}$ for
$\epsilon=0$.

We consider this model in the unitary bidisc $\{|x|\leq1,|y|\leq1\}$.

\begin{lemma}\label{lem:transp sn} For the model family above there exists a pair $v=(v_1,v_I)$ of  vector fields $v$ defined in $\{|x|,|y|<1\}$ (except in a small neighborhood of $(1,1)$) and satisfying \eqref{eq:symms}. Both  the negative flow of $v_1$ and flow of $v_I$ do not increase  $|x|$ and $|\widetilde{H}|$. Both $v_1$ and $v_I$ are tangent to lines $\{y=\const\}$ near $(0,1)$ and to the lines $\{x=\const\}$ near $(1,0)$.
\end{lemma}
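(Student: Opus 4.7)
The plan is to mirror the construction in Lemma~\ref{lem:transp s}: build two holomorphic vector fields $v_x$ and $v_y$ in the $x$- and $y$-coordinate directions of the normalizing chart, each satisfying $d(\log H)(\cdot)=1$, then interpolate them by a smooth real cutoff $\phi$, and set $v_I=Iv_1$.

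Concretely, from (\ref{eq:normaldlogH}) and the fact that $\widetilde H$ depends only on $x$, I would set
$$
v_y=y\,\partial_y,\qquad v_x=\frac{x^2-\epsilon^2}{1+\alpha(x-\epsilon)}\,\partial_x.
$$
Both are holomorphic on $\{|x|,|y|<1\}$ for $|\epsilon|,|\alpha|$ small enough, and direct substitution gives $d(\log H)(v_x)=d(\log\widetilde H)(v_x)=1$ and $d(\log H)(v_y)=d(\log y)(v_y)=1$. The field $v_y$ preserves $x$ (hence $\widetilde H$) and is tangent to every line $\{x=\const\}$; $v_x$ preserves $y$ and is tangent to every line $\{y=\const\}$. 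Choose a smooth real $\phi$ on the bidisc, $0\le\phi\le 1$, equal to $1$ in a neighborhood of $\{y=1\}$ and to $0$ in a neighborhood of $\{x=1\}$; these two requirements are incompatible only near the corner $(1,1)$, which is the reason that corner is excluded from the domain of $v$. Put $v_1=\phi v_x+(1-\phi)v_y$ and $v_I=Iv_1$. $\mathbb{R}$-linearity of $d(\log H)$ yields $d(\log H)(v_1)=\phi+(1-\phi)=1$, and the tangency claims of the lemma are immediate from the definition of $\phi$.

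Monotonicity of $|\widetilde H|$ follows from $v_1(\widetilde H)=\phi\widetilde H$ together with the identity $df(Jw)=i\,df(w)$ valid for holomorphic $f$ and any real tangent vector $w$ (a direct consequence of the Cauchy--Riemann equations, as in Lemma~\ref{lem:trivial}): thus $d\log|\widetilde H|/dt=\phi\ge 0$ along $v_1$, so the negative flow of $v_1$ does not increase $|\widetilde H|$, while $d\log|\widetilde H|/dt=0$ along $v_I$, so $|\widetilde H|$ is preserved.

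The main obstacle is controlling $|x|$: unlike in the saddle case, the quantity $d(\log x)(v_x)=(x^2-\epsilon^2)/\big(x(1+\alpha(x-\epsilon))\big)$ is not real-valued, so a priori neither $v_1$ nor $v_I$ preserves $|x|$ on the full bidisc. For small $|\epsilon|,|\alpha|$ this ratio equals $x$ up to lower-order corrections, so $\re(\bar x\,v_x(x))\approx|x|^2\re x$ and $\im(\bar x\,v_x(x))\approx|x|^2\im x$; hence $|x|$ is non-increasing under the negative flow of $v_1$ on the sector $\{\re x\ge 0\}$ and under $v_I$ on $\{\im x\ge 0\}$. The technical heart of the proof is verifying that the transport of cycles we care about remains inside the intersection of these good sectors: the real cycles we will transport live in the slab $\{|H|<h\}$, i.e. where $|y||\widetilde H|$ is small, and the known sectorial behavior of $|\widetilde H(x)|$ (essentially the Ecalle--Roussarie compensator geometry entering equations (\ref{omega})--(\ref{eq:omega is Pfaffian})) confines $x$ to a narrow sector around the strong separatrix where both monotonicity inequalities hold. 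This geometric check, combined with a careful choice of $\phi$ respecting the sectorial decomposition, is what makes the construction go through and is the only non-formal ingredient of the proof.
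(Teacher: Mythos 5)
Your construction is exactly the paper's: the same $v_x,v_y$ of (\ref{eq:normalautomorphisms}), the same cutoff $\phi$, $v_1=\phi v_x+(1-\phi)v_y$, $v_I=Iv_1$, and your verification of \eqref{eq:symms}, of the tangency statements, and of the behaviour of $|\widetilde{H}|$ (via the Cauchy--Riemann observation of Lemma~\ref{lem:trivial}) is precisely what the paper does, the paper merely listing the identities $d(\log H)(v_x)=d(\log H)(v_y)=1$, $d(\log\widetilde{H})(v_x)=1$, $d(\log\widetilde{H})(v_y)=0$, $d(\log y)(v_x)=0$, $d(\log y)(v_y)=1$ and leaving the rest as an easy check (it also works with $\epsilon\neq0$ and gets $\epsilon=0$ by passing to the limit).

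The problem is your last paragraph. You correctly observe that $|x|$ is not monotone along these fields (for $\epsilon=\alpha=0$ one has $d\log|x|(v_x)=\re x$, which changes sign on the disc), but the sectorial argument you sketch is not carried out, and it cannot be completed into a proof of a statement about the vector fields on the whole chart: the good sectors $\{\re x\ge0\}$, $\{\im x\ge0\}$ are not invariant, and restricting attention to particular transported cycles changes the statement being proved. The way out is that the $|x|$-claim should be read as a claim about $|y|$: this is how the lemma is actually used in Proposition~\ref{prop:homotopy}, where the invariant region is the bidisc (\ref{eq:smallbidisc}) $\left\{|\widetilde{H}(x)|\le|\widetilde{H}(1)|\right\}\times\{|y|\le1\}$ and the quantities shown to decrease are $|\widetilde{H}|$ and $|y|$; the paper even remarks right after that proposition that, unlike the saddle case, the full bidisc $\{|x|,|y|\le1\}$ is \emph{not} preserved. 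The needed check is immediate and parallel to your $|\widetilde{H}|$ computation: $d(\log y)(v_1)=1-\phi\ge0$ is real, so $|y|$ does not increase under the negative flow of $v_1$, while $d(\log y)(v_I)=i(1-\phi)$ is purely imaginary, so $|y|$ is preserved by the flow of $v_I$. This verification of the $|y|$-monotonicity, which is the fact used downstream, is missing from your write-up, and the attempted rescue of the literal $|x|$-claim is a genuine gap rather than "the only non-formal ingredient" of the proof.
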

\begin{proof}
We consider only the case $\epsilon\neq0$, and the case $\epsilon=0$
is obtained by taking the limit.

Let
\begin{equation}\label{eq:normalautomorphisms}
v_x=\frac{x^2-\epsilon^2}{1+\alpha(x-\epsilon)}\,\partial_x, \qquad
v_y=y\partial_y
\end{equation}
be two vector fields in the bidisc. We have
\begin{align*}
d(\log H)(v_x)=d(\log H)(v_y)=1\\
d(\log \widetilde{H})(v_x)=1,\qquad d(\log \widetilde{H})(v_y)=0\\
d(\log y)(v_x)=0,\qquad  d(\log y)(v_y)=1.
\end{align*}

Let $\phi$ be a smooth function defined in the bidisc, $0\le \phi
\le 1$,equal to $0$ in a neighborhood of $\{x=1\}$ and equal to $1$
in a neighborhood of $\{y=1\}$. One can easily check that the pair
of two real vector field $(v_1=\phi v_x+(1-\phi)v_y, v_I=Iv_1)$
satisfies conditions of the Lemma. \end{proof}

\begin{figure}[htpb]
\centerline{\includegraphics[width=\hsize]{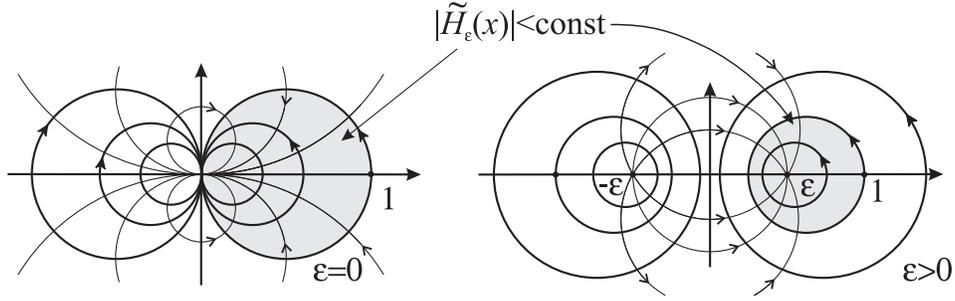}}
\caption{Flow of the real and imaginary parts of the vector fields $v_x$.}\label{fig:cycle}
\end{figure}

The following is a saddle-node analogue of the
Proposition~\ref{prop:s is stable}.
\begin{proposition}\label{prop:homotopy}
Let $\gamma(h_0,\epsilon,\alpha)$ be  a relative cycle in the unitary bidisc
lying on $\{H_{\epsilon,\alpha}(x,y)=h_0\neq0\}$  taken modulo the two
transversals $\{y=1\}$ and $\{ x=1\}$. Assume in addition that the
cycle lies entirely in the bidisc
\begin{equation}\label{eq:smallbidisc}
\left\{|\widetilde{H}(x)|\le|\widetilde{H}(1)| \right\}
\times\{|y|\le 1\}\end{equation}

Then the relative cycle $\gamma(h_0,\epsilon,\alpha)$ transports in relative cycles along  any curve
$\varrho(t):[0,1]\to \{0<|z|\le|h_0|\}$, $\varrho(0)=h_0$,
remains in \eqref{eq:smallbidisc}  provided
$|\varrho|'(t)<0$ for all $t$.
\end{proposition}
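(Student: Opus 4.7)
The plan is to combine the lifting construction of Section~\ref{sec:transport} with the geometric properties of the pair $v=(v_1,v_I)$ provided by Lemma~\ref{lem:transp sn}. At each point $a$ on the leaf $\{H=\varrho(t)\}$ the lifted vector of $\varrho'(t)$ is given by \eqref{eq:lifting} as
$$
\tilde\varrho'(t)_a=\re\bigl(\varrho(t)^{-1}\varrho'(t)\bigr)\,v_1(a)+\im\bigl(\varrho(t)^{-1}\varrho'(t)\bigr)\,v_I(a).
$$
The assumption $|\varrho|'(t)<0$ translates into $\re(\varrho^{-1}\varrho')=|\varrho|'(t)/|\varrho(t)|<0$, so the $v_1$-component of the lifting is strictly negative throughout the transport, while the $v_I$-component is unconstrained in sign.

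Next, I would verify that the three bidisc bounds defining \eqref{eq:smallbidisc} are preserved along the lifting. The inequalities $|x|\le 1$ and $|\widetilde H(x)|\le|\widetilde H(1)|$ follow directly from Lemma~\ref{lem:transp sn}, since the negative flow of $v_1$ and the flow of $v_I$ do not increase these quantities. For the remaining bound $|y|\le 1$, I would compute $d|y|$ explicitly using \eqref{eq:normalautomorphisms}: the vector fields $v_x$ and $Iv_x$ annihilate $y$, while $v_y=y\partial_y$ gives $d|y|(v_y)=|y|$ and $d|y|(Iv_y)=0$. Combining through the cutoff $\phi$ yields
$$
d|y|\bigl(\tilde\varrho'(t)\bigr)=\re\bigl(\varrho^{-1}\varrho'\bigr)\,(1-\phi)\,|y|\le 0,
$$
so $|y|$ is non-increasing along the lifted trajectory and the cycle stays in \eqref{eq:smallbidisc}.

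The transport preserves the relative-cycle structure because of the tangency part of Lemma~\ref{lem:transp sn}: $v_1$ and $v_I$ are tangent to the lines $\{x=\mathrm{const}\}$ near $(1,0)$ and to $\{y=\mathrm{const}\}$ near $(0,1)$, so the endpoints of $\gamma(h_0,\epsilon,\alpha)$ lying on the transversals $\{x=1\}$ or $\{y=1\}$ flow within those transversals and the transported chain remains a relative $1$-cycle. The main obstacle to address is the small neighborhood of $(1,1)$ outside which $v$ is defined; this is where the hypothesis that $h_0$ is near the polycycle is used. On any leaf $\{H=\varrho(t)\}$ lying in \eqref{eq:smallbidisc} one has $|\widetilde H(x)|\cdot|y|=|\varrho(t)|\le|h_0|$, whereas near $(1,1)$ the product $|\widetilde H||y|$ is close to $|\widetilde H(1)|$. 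Taking $|h_0|$ small enough (the only freedom allowed in the statement) therefore keeps the lifted cycle uniformly away from $(1,1)$, so the flow of Lemma~\ref{lem:transp sn} is applicable throughout the transport and the lifting is globally well defined.
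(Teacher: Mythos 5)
Your proposal is correct and follows essentially the same route as the paper's proof: lift $\varrho$ via \eqref{eq:lifting}, use $\re(\varrho^{-1}\varrho')<0$ together with the monotonicity and tangency properties of $(v_1,v_I)$ from Lemma~\ref{lem:transp sn} to see that $|\widetilde H|$ and $|y|$ do not increase and that the endpoints stay on the transversals. You additionally spell out two points the paper leaves implicit --- the explicit computation showing $d|y|\le 0$ along the lifting (the lemma only states non-increase of $|x|$ and $|\widetilde H|$) and the argument that, for $|h_0|$ small, the transported cycle avoids the neighborhood of $(1,1)$ where $v$ is undefined --- both of which are accurate and strengthen the write-up.
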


Note that unlike the previous case of saddles, the lifting does not
preserve the whole bidisc $\{|x|,|y|\le 1\}$, but only the bidisc
(\ref{eq:smallbidisc}) (see figure \ref{fig:cycle}). However, the  parts of the \emph{real} cycles
$\gamma_{\epsilon,\alpha}(h)$ passing near the saddle-node lie in
(\ref{eq:smallbidisc}), so satisfy the conditions of the Lemma.
\begin{proof}
Indeed, since $v$ preserves the transversals $\{y=1\}$ and $\{
x=1\}$, the endpoints of $\gamma(h,\epsilon,\alpha)$ still lie on
them. Similarly to the proof of Proposition~\ref{prop:s is stable},
from $\re\left(\varrho(t)^{-1}\varrho'(t)\right)<0$ we conclude that both $|\widetilde{H}|$ and $|y|$
only decrease along lifting of $\varrho(t)$, so the points of bidisc
(\ref{eq:smallbidisc}) remain in it when transported along $\varrho(t)$.
\end{proof}

\subsection{Gluing a global transport}
Here we extend the vector fields constructed above to a vector field defined in a whole neighborhood of
the polycycle $D$.

\begin{proposition}\label{prop:glueing v}
There exists a complex neighborhood $U$ of $D_{\mu}$ and
a pair of real vector fields $v=(v_1,v_I)$ in $U$ satisfying
\eqref{eq:symms}. Moreover, transport of real cycles
$\gamma_{\mu}(h)$ along  any curve
$\varrho(t)\subset\{0<|z|\le|h_0|\}$  remains in $U$  provided
$|\varrho|'(t)<0$ for all $t$.
\end{proposition}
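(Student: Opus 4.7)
The plan is to produce $v$ by a partition-of-unity gluing. I would cover a complex neighborhood of $D$ by the singular-point neighborhoods $U_j$ already equipped with $v^{(j)}$ from Lemmas~\ref{lem:transp s} and~\ref{lem:transp sn}, together with thin tubular neighborhoods $U_L$ of the interiors of the arcs of $D$ joining them. On an arc $L\subset\{P_{i,\mu}=0\}$ (or $L\subset\{S_\mu=0\}$), $\theta_\mu$ has a simple logarithmic pole with residue $a_{i,\mu}$: in a holomorphic coordinate $z$ transverse to $L$ one can write $H_\mu=z^{a_{i,\mu}}g(z,\cdot)$ with $g$ nowhere zero in $U_L$, so the holomorphic vector field
\begin{equation*}
w_L:=\frac{z}{a_{i,\mu}+z\,\partial_z(\log g)}\,\partial_z
\end{equation*}
satisfies $d(\log H_\mu)(w_L)=1$, and I would set $(v_1^L,v_I^L):=(w_L,Iw_L)$.

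Next I would choose a smooth partition of unity $\{\phi_\alpha\}$ subordinate to the cover $\{U_j\}\cup\{U_L\}$, analytic in $\mu$, and define
\begin{equation*}
v_1:=\sum_\alpha \phi_\alpha\, v_1^{(\alpha)},\qquad v_I:=\sum_\alpha \phi_\alpha\, v_I^{(\alpha)}.
\end{equation*}
Since $d(\log H_\mu)(v_1^{(\alpha)})\equiv 1$ and $d(\log H_\mu)(v_I^{(\alpha)})\equiv I$ on every piece of the cover, linearity and $\sum_\alpha\phi_\alpha\equiv 1$ immediately give \eqref{eq:symms} on $U:=\bigcup_\alpha U_\alpha$.

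For the transport claim, the key observation is that by the Cauchy--Riemann equations (as in Lemma~\ref{lem:trivial}) any real tangent vector $\xi=av_1+bv_I$ satisfies $d\log|H_\mu|(\xi)=\re(d\log H_\mu(\xi))=a$. Applied to the lifting \eqref{eq:lifting} of $\varrho(t)$,
\begin{equation*}
\frac{d}{dt}\log|H_\mu(\tilde\varrho_a(t))|=\re\!\left(\frac{\varrho'(t)}{\varrho(t)}\right)=\frac{d}{dt}\log|\varrho(t)|,
\end{equation*}
so $|H_\mu|$ tracks $|\varrho|$ exactly along the lifting. Under $|\varrho|'<0$ the lifting is therefore trapped in $\{|H_\mu|\le h_0\}$, which is contained in $U$ once $h_0$ is small enough, since $H_\mu$ vanishes on the compact set $D$.

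The main obstacle I expect is that the partition of unity breaks the sharper containment statements of Propositions~\ref{prop:s is stable} and~\ref{prop:homotopy}, which apply only to the unmixed local $v^{(j)}$. I would handle this by arranging $\phi_j\equiv 1$ on a subneighborhood of $m_j$ still containing all real cycles $\gamma_\mu(h)$, $h\in(0,h_0]$, so that the gluing region lies in the boundary tangency zones $\{x=\const\}$ or $\{y=\const\}$ where both $v^{(j)}$ and the adjacent $w_L$ are tangent to the arc-transversal; the convex combination then inherits this tangency, and combined with the $|H_\mu|$-monotonicity above this confines the lifted cycle to the union of the chosen neighborhoods.
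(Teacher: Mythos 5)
Your gluing gives \eqref{eq:symms}, but the confinement argument has a genuine gap. The claim that under $|\varrho|'<0$ the lifting is ``trapped in $\{|H_\mu|\le h_0\}$, which is contained in $U$ once $h_0$ is small enough'' is false: the sublevel set $\{|H_\mu|\le h_0\}$ is a neighborhood of the \emph{entire} zero divisor of $H_\mu$, which extends well beyond the compact polycycle $D$ (the curves $P_i^{-1}(0)$, $Q^{-1}(0)$ continue past the vertices), and inside $U$ every leaf $\{H_\mu=h\}$ reaches $\partial U$ while $|H_\mu|$ stays equal to $|h|$. So monotonicity of $|H_\mu|$ alone cannot prevent the lifted point from drifting \emph{along the leaves} out of $U$; this is exactly why the paper's local lemmas prove the finer statements that the flows do not increase $|x|,|y|$ (Lemma~\ref{lem:transp s}) resp.\ $|\widetilde H|,|y|$ (Lemma~\ref{lem:transp sn}, bidisc \eqref{eq:smallbidisc}), and why along each arc the paper does not take a bare convex combination but constructs one global family $\mathcal{F}$ of transversal discs (interpolating the $\{y=\const\}$ families of the two adjacent normalizing charts via a glued metric and the normal exponential map) and defines $v$ as the unique pair tangent to $\mathcal{F}$ satisfying \eqref{eq:symms}. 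Tangency to a fixed disc family means each lifted point stays on its own disc, so its dynamics is holonomy-conjugate to the controlled transversal dynamics at the singular points and there is no drift at all. Your partition-of-unity field preserves no transversal fibration on the overlaps: your $w_L$ is built from an arbitrary transverse coordinate $z$, so its complex line field need not be tangent to the $\{x=\const\}$ (or $\{y=\const\}$) discs of the normalizing chart, and the assertion in your last paragraph that ``the convex combination inherits this tangency'' is precisely the unproved point. Since the curves $\varrho$ to be lifted may wind around $0$ arbitrarily many times (this is needed for the iterated variations), even a small uncontrolled drift in the gluing zone can accumulate and push the lifting out of the tube; nothing in your argument excludes this.

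A second, more local error: on an edge lying in $\{S_\mu=0\}$ (the exponential edge, which is the whole point of this paper) $\theta_\mu$ does \emph{not} have a simple logarithmic pole with bounded residue; at $\mu=0$ the pole has order two and $H_0\sim e^{c/z}$, and for $\mu\ne 0$ the residues behave like $1/\epsilon$, $\alpha-1/\epsilon$ and degenerate. Hence the representation $H_\mu=z^{a_{i,\mu}}g$ and your formula for $w_L$ simply do not exist there. This is fixable (e.g.\ take $w_L=M_\mu X$ with $(M_\mu\theta_\mu)(X)=1$, which reproduces the quadratic vanishing of $v_x$ in \eqref{eq:normalautomorphisms}), but as written your construction misses the one edge where the confinement question is most delicate, namely where the relevant quantity to control is $|\widetilde H|$ rather than $|H_\mu|$, as in Proposition~\ref{prop:homotopy}.
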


\begin{proof}
For each singular points of the polycycle we defined two
transversals intersecting the polycycle. They are given by $\{x=1\}$ and
$\{y=1\}$ in the normalizing chart of the singular point.

For an arc of the polycycle joining two singular points $m_1,m_2$
consider two transversals $\Gamma_1,
\Gamma_2$ to this arc, lying in normalizing charts $W_1$ and $W_2$ of $m_1$ and
$m_2$ correspondingly, and let $K$ be a compact piece of the arc joining $\Gamma_1$ and $\Gamma_2$. Let $U_K$ be a neighborhood of $K$ in the
leaf of the foliation containing $K$.

To fix notations, assume that in the normalizing coordinates in $W_1$  the leaf
containing $K$ is contained in $\{x=0\}$. The family $\mathcal{F}_1$
of discs given by  $\{y=const\}$ is transversal to $U_K$ and
invariant under the flow of vector fields $v_1,v_I$ constructed
before (assuming $U_K$ is sufficiently small). Similar transversal
family $\mathcal{F}_2$ of invariant discs exists on the other end of
$K$.

Our immediate goal is to embed these two families of discs into
one smooth  family $\mathcal{F}$ of smooth real two-dimensional
discs transversal to $U_K$ and filling some neighborhood of $K$ in
$\C^2$. Let $g_1$ be a Riemannian metric defined in $W_1$ which in
normalizing coordinates is just a standard Euclidean metric in
$\R^4$, so the leaf containing $K$ and the discs of $\mathcal{F}_1$
lie in orthogonal affine planes. Let $g_2 $ be a similar metric in
$W_2$, and continue smoothly these two metrics to a metric $g$
defined in some neighborhood of $U_K$ in $\C^4$. We can assume that $g$ preserves the
complex structure of $U_K$. The
exponential mapping $\exp_g$ maps diffeomorphically
some neighborhood $\widetilde{W}\subset NU_K$ of $U_K$  in its
normal bundle $NU_K$ onto some neighborhood $W$ of
$U_K$ in $\C^2$,  in such a way that the images of fibers $N_xU_K$
are mapped into the leaves of $\mathcal{F}_i$ for $x\in U_K\cap
W_i$, $i=1,2$. We define $\mathcal{F}$ as the family
$\mathcal{F}(x)=\exp_g(B_x)$, $x\in U_K$, where
$B_x= N_xU_K\cap\widetilde{W}$ are small discs
(symmetry of $g$ with respect to conjugation assures that for
$x\in K$ the leaves $\mathcal{F}(x)$ intersect $\R^2$ by a smooth curve transversal to $K$).

Shrinking $B_x$, we can assume that $\mathcal{F}(x)$ is transversal
to the leaves $\{H=h\}$ for all sufficiently small $h$ (because
$\mathcal{F}(x)$ is transversal to the leaf containing $K$). We
define $v=(v_1, v_I)$ in the neighborhood $W_K$ of $U_K$ in $\C^2$
as the two vector fields tangent to $\mathcal{F}(x)$ and satisfying
\eqref{eq:symms}.

Evidently, $\mathcal{F}$ coincide with $\mathcal{F}_i$ in $W_i$.
Since \eqref{eq:symms} define uniquely the pair of vector fields
tangent to a real two-dimensional surface transversal to $\{H=h\}$,
we conclude that thus constructed $v$ is a smooth extension of the
vector fields constructed before.

Dynamics of $v$ on each leaf $\mathcal{F}(x)$ is conjugated to
dynamics on the transversal $\{y=1\}$ of $v$ constructed in either
Lemma~\ref{lem:transp s} (when one of two singular  points is a saddle)
or Lemma~\ref{lem:transp sn} (for a connection between two saddle-nodes),
with conjugation map being just the flow from one transversal to another.
We use here the fact of smoothness of $Q$: it implies that the weak manifolds of saddle-nodes
of the polycycle $D$ join them to saddles, and two saddle-nodes can be connected by their strong manifolds
 only.

Let $a$ be a real point lying on $\mathcal{F}(x)\cap\gamma_{\mu}(h)$
for $h$ sufficiently small. Then the  lifting of any curve
$\varrho(t)\subset\{0<|z|\le |h|\}$, $\varrho(0)=h$,  starting
from $a$ remains in $\{|H|\le h\}\cap \mathcal{F}(x)$, which is
contained in $W_K$ provided that $h$ is sufficiently small.

Repeating the construction for all arcs of the polycycle, we get the
pair $v=(v_1, v_I)$ defined in the neighborhood $U$ of the
polycycle, where $U$ is the union of normalizing charts $W_i$ and the
neighborhoods $W_K$ of all singular points and arcs of the
polycycle.\end{proof}

\section{Pushing cycles away from the weak manifold}
Recall that $L_E^\R$ is the union of edges $D$ contained in the zero level curve $Q=0$.
Any  arc of $D$ lying on $\{Q=0\}$ joins two saddle-nodes, and
is the strong manifold of both.

The aim of this section is to prove the following Proposition:

\begin{proposition}\label{prop:vvar}
There exists a neighborhood $U(L_E^\R)$ of $L_E^\R$  in $\C^2$ and
neighborhoods $U_k^\C$ of the central varieties of saddle-nodes
$m_l\in D$ such that in the open set $E_L=U(L_E^\R)\setminus\cup\overline{ U_l^\C}$
\begin{enumerate}
\item the family
$\theta_{\mu}$ defines a holomorphic foliation without singularities analytically
depending on $\mu$ and
\item the family of cycles
\begin{equation}\label{vvar}
Var_{a_1,...,a_k}\gamma_\mu(h).
\end{equation}
is homotopic along the fibers to the family of cycles lying in $E_L$, where $\gamma_\mu(h)$ are real cycles as in \eqref{eq:integral}.
\end{enumerate}
\end{proposition}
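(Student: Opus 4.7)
The plan has two independent parts corresponding to the two claims of the proposition.

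For the first claim, I would identify the singular set of the foliation defined by $\theta_\mu$ in a neighborhood of $L_E^\R$. Since $M_\mu\theta_\mu$ is holomorphic, the foliation is nonsingular away from isolated points located at transverse intersections of pairs of polar components of $\theta_\mu$. Near $L_E^\R$, the only such intersections are the saddle-nodes $m_l\in L_E\cap D$ where an arc of $\{Q_{1,\mu}Q_{2,\mu}=0\}$ meets an arc of some $\{P_{j,\mu}=0\}$. Using the normalizing coordinates of Lemma \ref{normall}, the central manifold of $m_l$ is transverse to $L_E$; I would take $U_l^\C$ to be a small complex neighborhood of this central manifold, varying analytically in $\mu$. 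Then $E_L = U(L_E^\R)\setminus\bigcup\overline{U_l^\C}$ contains no singular point of the foliation, and the analytic dependence on $\mu$ is inherited from that of the defining functions.

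For the second claim, the key geometric input is the observation made after (\ref{vardef}): in a neighborhood $W_j$ of the arcs $L_j^\R\subset D\cap\{P_{j,\mu}=0\}$ (disjoint from the vertices), the first integral has the local form $H_\mu=P_{j,\mu}^{a_{j,\mu}}\cdot G$ with $G$ holomorphic and nonvanishing, so the lift of $ih\partial_h$ via $(v_1,v_I)$ is $2\pi a_{j,\mu}$-periodic in $W_j$. By this periodicity, the two half-transports $T_{a_j}^{\pm 1/2}\gamma_\mu(h)$, lying on the fibers $\{H_\mu=he^{\pm i\pi a_{j,\mu}}\}$, coincide inside $W_j$ as subsets of $\C^2$ (with $H_\mu$ taken on different branches). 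The chain $Var_{a_j}\gamma_\mu(h) = T^{+1/2}_{a_j}\gamma_\mu(h) - T^{-1/2}_{a_j}\gamma_\mu(h)$ therefore cancels inside $W_j$ and is homotopic in its fibers to a chain supported in the complement of $W_j$. Iterating for $j=1,\ldots,k$, I conclude that $Var_{a_1,\ldots,a_k}\gamma_\mu(h)$ is homotopic along fibers to a chain supported in $U^\R\setminus\bigcup_j W_j$, which sits in a small neighborhood of $L_E^\R$ together with the vertices of $D$; a final shrinking and removal of $\bigcup\overline{U_l^\C}$ places the chain inside $E_L$.

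The main obstacle is coordinating the successive homotopies across the normalizing charts of the singular points. Each variation $Var_{a_j}$ produces cancellation only inside $W_j$, but the surviving parts of the chain have endpoints on transversals to $L_j^\R$ sitting inside the normalizing charts of the adjacent saddles and saddle-nodes, and subsequent variations must be compatible with these boundaries. The enabling structural property is that the vector fields of Lemmas \ref{lem:transp s} and \ref{lem:transp sn} preserve exactly the transversals $\{x=1\}$ and $\{y=1\}$, so the partial chains produced at each step of the induction terminate controllably on these and can be slid past them without crossing the next $P$-separatrix. The delicate point to verify is that after all $k$ iterations no nontrivial winding around a central manifold of a saddle-node survives, so that the chain truly lies in $U(L_E^\R)\setminus\bigcup\overline{U_l^\C}$; this uses that the real cycles on $\{Q_{1,\mu}Q_{2,\mu}=0\}$ only approach each saddle-node along the strong-manifold side, as reflected in the normal form \eqref{normal}.
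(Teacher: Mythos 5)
Your handling of claim (1) is essentially the paper's Lemma~\ref{lem:boldeight} (cover $E_L$ by saddle-node normalizing charts minus a neighborhood of the central variety, plus flow boxes along the arc), and your cancellation argument for the pieces of the chain near the saddles and near the $P_j$-separatrices is the periodicity observation that the paper imports from \cite{BM}; both of these are fine in substance. So after the iterated variation you correctly get a chain supported in a neighborhood $U(L_E^\R)$, i.e.\ in the saddle-node charts together with a neighborhood of the connection on $\{Q=0\}$.

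The genuine gap is your final step. Inside a saddle-node chart the surviving chain lies on the leaf $\{y\widetilde{H}(x)=h\}$ subject only to the constraint \eqref{eq:smallbidisc}, so on its support $|y|=|h|/|\widetilde{H}(x)|\ge |h|/|\widetilde{H}(1)|$, a lower bound that tends to $0$ as $h\to 0$. Hence for any \emph{fixed} neighborhoods $U_l^\C$ of the central varieties (and they must be fixed, independent of $h$ and of $\mu$, for the proposition and for the later flow-box/parameter arguments to make sense), the chain actually enters $U_l^\C$ once $h$ is small; no ``final shrinking and removal of $\bigcup\overline{U_l^\C}$'' can place it in $E_L$. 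What is missing is precisely the paper's Lemma~\ref{lem:normalpushing}: an explicit fiber-preserving homotopy given by the flow of $V=\psi\,(v_y-v_x)$, with $v_x,v_y$ as in \eqref{eq:normalautomorphisms}, which satisfies $dH(V)=0$, strictly increases $|y|$ where $\psi\equiv1$ and decreases $|\widetilde{H}|$, so that after time $\log(\delta/c)$ the chain is pushed out of $\{|y|<\delta\}$ while remaining in the bidisc \eqref{eq:smallbidisc} and keeping its boundary on $\{y=1\}$. You flag the ``delicate point'' as the absence of residual winding around the central manifold, but that is not where the difficulty lies (the leaves $\{H=h\neq0\}$ are disjoint from $\{y=0\}$, so there is no homological obstruction); the difficulty is that the conclusion is only ``homotopic along the fibers to a cycle in $E_L$'', and your proposal never constructs that homotopy.
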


Shrinking $E_L$ if necessary, we can assume that connected components of $E_L$ are in one-to-one correspondence of the arcs of $D$ lying on $\{Q=0\}$ and have homotopy type of the figure eight.

We first show that a cycle lying near $L_E$ and in the saddle regions of the saddle-nodes
can be pushed away from the central variety while remaining in a neighborhood of $L_E$.
This will be needed to prove that the integral of a meromorphic form $M^{-1}\eta_{\mu}$ over such cycle depends holomorphically on $\mu$ and the transversal coordinate. The transversal coordinate is
exactly $\frac{1}{\omega(h,\epsilon(\mu),\alpha(\mu))}$ for suitable functions $\epsilon(\mu),\alpha(\mu)$.

\begin{lemma}\label{lem:normalpushing}
Using assumptions and notations of Proposition~\ref{prop:homotopy} let $\gamma=\gamma(h,\epsilon,\alpha)\subset \{H_{\epsilon,\alpha}=h\}$ be a relative cycle
lying in the bidisc (\ref{eq:smallbidisc}) and whose boundary is in
$\{H_{\epsilon,\alpha}=h\}\cap\{y=1\}$. Then $\gamma$ is homotopy equivalent in $\{H_{\epsilon,\alpha}=h\}$ to a
relative cycle $\tilde \gamma$ with the same property and, in
addition, not intersecting a neighborhood $\{|y|<\delta\}$ of the
$x$-axis, for a sufficiently small  $\delta>0$ independent of
the cycle.
\end{lemma}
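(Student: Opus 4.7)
I would parametrize the leaf $\{H_{\epsilon,\alpha}=h\}$ by the $x$-coordinate: the identity $H_{\epsilon,\alpha}(x,y)=y\widetilde H(x)=h$ gives $y=h/\widetilde H(x)$, so the forbidden strip $\{|y|<\delta\}$ corresponds exactly to $\{|\widetilde H(x)|>|h|/\delta\}$, a small neighborhood of the pole of $\widetilde H$. The lemma thus reduces to producing, within the compact annular region $K=\{|x|\le 1,\ |h|\le|\widetilde H(x)|\le|\widetilde H(1)|\}$, a homotopy of the $x$-image of $\gamma$ relative to the finite endpoint set $\{\widetilde H(x)=h\}$ that ends in $\{|\widetilde H(x)|\le|h|/\delta\}$.

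The key input is that $\widetilde H$ is holomorphic, nonvanishing, and free of critical points on $K$ for small parameters. From \eqref{eq:normaldlogH} one reads off $d\log\widetilde H/dx=(1+\alpha(x-\epsilon))/(x^2-\epsilon^2)$, whose unique zero $x=\epsilon-1/\alpha$ escapes to infinity as $\alpha\to 0$ and is absent when $\alpha=0$. Hence for $(\epsilon,\alpha)$ sufficiently close to the origin $|d\log\widetilde H/dx|\ge c$ on $K$ for some $c>0$ independent of the parameters, and the same bound holds for the real gradient $|\nabla\log|\widetilde H||$.

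Now fix a small $\tau>0$ and a smooth cutoff $\chi:\mathbb R_{\ge 0}\to[0,1]$ with $\chi\equiv 0$ on $[0,1+\tau]$ and $\chi\equiv 1$ on $[1+2\tau,\infty)$, and let $\Phi_t$ be the flow on $K$ of the real vector field
\[
X(x)=-\chi\!\left(|\widetilde H(x)|/|h|\right)\,\nabla\log|\widetilde H(x)|.
\]
This flow (i) fixes every point with $|\widetilde H|\le(1+\tau)|h|$, in particular the endpoints of $\gamma$, which satisfy $\widetilde H=h$; (ii) satisfies $\tfrac{d}{dt}\log|\widetilde H\circ\Phi_t|\le -c^2$ wherever $\chi=1$, so after a finite time $T$ no point of $\Phi_T(\gamma)$ remains in $\{|\widetilde H|>(1+2\tau)|h|\}$; and (iii) never increases $|\widetilde H|$, so $\Phi_t(K)\subseteq K$ and the entire homotopy stays inside the bidisc \eqref{eq:smallbidisc}. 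Setting $\delta=(1+2\tau)^{-1}$ and $\tilde\gamma=\Phi_T(\gamma)$, the map $t\mapsto\Phi_t(\gamma)$ provides the desired in-leaf homotopy from $\gamma$ to $\tilde\gamma$, and by construction $\tilde\gamma$ does not meet $\{|y|<\delta\}$.

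The principal technical difficulty is the uniform lower bound on $|\nabla\log|\widetilde H||$ as $(\epsilon,\alpha)\to(0,0)$; once that is in hand the remainder is routine gradient-flow bookkeeping, and the constant $\delta=(1+2\tau)^{-1}$ is manifestly independent of $\gamma$ and of the small parameters $(h,\epsilon,\alpha)$.
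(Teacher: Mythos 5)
Your construction is sound and, in substance, it retraces the paper's own deformation. The paper flows along $V=\psi(y)\,(v_y-v_x)$, with $v_x,v_y$ from \eqref{eq:normalautomorphisms} and $\psi$ a bump function of $|y|$ supported in $\{|y|<2\delta\}$: $d\log H(V)=0$ keeps the motion in the leaf, $L_V\log y=\psi$ pushes points off $\{|y|<\delta\}$ after time $\log(\delta/c)$ with $c=\dist(\gamma,\{y=0\})$, $L_V\log\widetilde{H}=-\psi$ preserves \eqref{eq:smallbidisc}, and $V$ vanishes near $\{y=1\}$, fixing the boundary. Since on the leaf $|y|=|h|/|\widetilde{H}(x)|$, your cutoff $\chi(|\widetilde{H}|/|h|)$ is also a cutoff in $|y|$, and the unparametrized $x$-orbits of $-v_x$ and of $-\nabla\log|\widetilde{H}|$ coincide (both are level lines of $\im\log\widetilde{H}$ along which $\log|\widetilde{H}|$ decreases), so your homotopy is the paper's up to the time law and the location of the cutoff ($|y|\approx\delta$ there, $|y|\approx1$ here, whence your $\delta=(1+2\tau)^{-1}$ --- admissible, since avoiding a larger neighborhood is stronger). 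What the paper's choice buys is simplicity: $v_x$ is holomorphic and bounded on the bidisc, the rates are exactly $\pm\psi$, no lower bound on the gradient is needed, and completeness of the flow is immediate. Your version needs a few patches that you currently gloss over: (i) $\widetilde{H}$ is multivalued, so you should flow the lift of $X$ on the leaf rather than literally the $x$-image of $\gamma$ (only $|\widetilde{H}|$ and $\nabla\log|\widetilde{H}|$ are single-valued for real $\epsilon,\alpha$, which is all you use); (ii) $-\chi\,\nabla\log|\widetilde{H}|$ is unbounded on $\{\chi=1\}$ near the singular points of $\widetilde{H}$ ($x=\pm\epsilon$, resp.\ the essential singularity $x=0$), which lie in the closure of $K$ (so $K$ is not compact); to justify existence of the flow up to your time $T$ (of order $c^{-2}\log(1/|h|)$, $h$-dependent but finite, which suffices for a homotopy), note that each trajectory stays on a level line of a branch of $\im\log\widetilde{H}$ with $\log|\widetilde{H}|$ confined to $[\log((1+\tau)|h|),\,\log|\widetilde{H}(1)|]$, hence in a compact set avoiding these points; (iii) invariance of $\{|x|\le1\}$ does not follow from monotonicity of $|\widetilde{H}|$ alone --- you need that the sublevel set $\{|\widetilde{H}|\le|\widetilde{H}(1)|\}$ is contained in the unit $x$-disc for small $(\epsilon,\alpha)$ --- and $|y|\le1$ is preserved because moving points keep $|\widetilde{H}|\ge(1+\tau)|h|$, which is what keeps $\tilde\gamma$ inside \eqref{eq:smallbidisc}.
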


\begin{proof}[Proof of Lemma \ref{lem:normalpushing}.]
Choose a non-negative bump function $\psi(y)$ equal identically to
$1$ on  $\{|y|<\delta\}$, and vanishing outside $\{|y|<2\delta<1\}$.
Define the vector field  $V=\psi (v_y-v_x)$, where $v_x$ and $v_y$
were defined in (\ref{eq:normalautomorphisms}). Evidently, $dH(V)=0$,
so the flow of $V$ preserves the foliation. We can assume that
$c=\dist(\gamma,\{y=0\})<\delta$. Consider the image
$\tau_M\gamma$ of the cycle $\gamma$ by the $M$-time flow, where
$M=\log\frac{\delta}{c}>0$. Since $L_Vy=y$ in $\{|y|\le\delta\}$,
the image $\tau_M\gamma$ lies outside $\{|y|\le ce^M=\delta\}$.
Since $L_V(\log\widetilde{H})=-\psi<0$, the $|\widetilde{H}|$ is
decreased by this flow, so the condition (\ref{eq:smallbidisc}) is
still satisfied.
\end{proof}

\subsection{Flow-box triviality}

Consider a  neighborhood $U(L_E^\R)$ of $L_E^\R$  in $\C^2$ which is
a union of the normalizing charts of the saddle-nodes and of the open set $U_K$ constructed in the proof of Proposition~\ref{prop:glueing v} for $L_E^\R$.
Let $U_k^\C$ be neighborhoods of the central variety of each
saddle-node $m_k$ as in Lemma~\ref{lem:normalpushing}.

\begin{lemma}\label{lem:boldeight}
The foliation defined by $H_{\mu}$ in the open set
$E_L$ is analytic without
singularities and depends analytically on sufficiently small
parameter $\mu$.
\end{lemma}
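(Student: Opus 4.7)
The plan is to describe the singular locus of the foliation $\theta_\mu=0$ inside $U(L_E^\R)$ explicitly and check that every singular point is absorbed by $\bigcup_k \overline{U_k^\C}$, so that only nonsingular leaves of a honestly holomorphic foliation survive on $E_L$; analytic dependence on $\mu$ will then follow from the analyticity of every ingredient in the construction.

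First I would work inside the normalizing chart $W_k$ of each saddle-node $m_k$. By Lemma~\ref{normall} the foliation there is generated by the vector field (\ref{normal}), with $\epsilon=\epsilon(\mu)$ and $\alpha=\alpha(\mu)$ real-analytic in $\mu$ and vanishing at $\mu=0$. The common zeros of $\dot x=-x^2+\epsilon^2$ and $\dot y=y(1+\alpha(x-\epsilon))$ are exactly $(\pm\epsilon(\mu),0)$: for $\mu$ small $1+\alpha(x-\epsilon)$ stays uniformly away from $0$ on $W_k$, so the second factor forces $y=0$. Both singularities sit on the central variety $\{y=0\}$, hence (taking the uniform $\delta$ of Lemma~\ref{lem:normalpushing}) inside $U_k^\C\subset\{|y|<\delta\}$. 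Therefore $E_L\cap W_k$ contains no singular points of $\theta_\mu$, uniformly for small $\mu$.

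Next, on the tubular region $W_K$ constructed in the proof of Proposition~\ref{prop:glueing v} along each compact arc $K\subset L_E^\R$ joining two saddle-nodes, I would observe that the polar divisor of $\theta_\mu$ reduces locally to the two smooth branches $\{Q_{1,\mu}=0\}$ and $\{Q_{2,\mu}=0\}$: the remaining polar components $\{P_{j,\mu}=0\}$ stay at positive distance from $K$ by the transversality part of Definition~\ref{genericity}, and $\{Q_{1,\mu}=0\}\cap\{Q_{2,\mu}=0\}$ lies in arbitrarily small neighborhoods of the saddle-nodes, hence inside $\bigcup_k W_k$ for $\mu$ small. Consequently $M_\mu\theta_\mu$ is holomorphic and non-vanishing on $W_K\setminus\bigcup_k W_k$ and defines a nonsingular holomorphic foliation there. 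Analytic dependence on $\mu$ is inherited from the analyticity in $\mu$ of the functions $Q_{i,\mu}, P_{j,\mu}$, of $\epsilon(\mu), \alpha(\mu)$, of the normalizing coordinates of Lemma~\ref{normall}, and of the Riemannian-exponential gluing used to build $\mathcal{F}$; the sets $U_k^\C$ and $W_K$ can be fixed independently of $\mu$ in a sufficiently small neighborhood of $0\in\R^m$.

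The only bookkeeping issue I foresee is controlling the singular locus uniformly in $\mu$: one must rule out spurious singularities when $\mu\neq0$ and verify that $U_k^\C$ truly absorbs the bifurcating saddles $(\pm\epsilon(\mu),0)$ for every small $\mu$. Both points are immediate from the explicit normal form (\ref{normal}) together with the analytic, vanishing-at-zero nature of $\epsilon(\mu),\alpha(\mu)$, so I do not expect a serious technical difficulty beyond writing the verification out carefully.
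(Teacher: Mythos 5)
Your overall route is the same as the paper's: cover $E_L$ by the saddle-node normalizing charts and by flow-box neighborhoods of the compact pieces of the separatrices, observe that in each chart the foliation is given by a normal form analytic in $\mu$ (Lemma~\ref{normall}, or a trivial flow box), and note that $E_L$ stays at a positive distance from the singular points; your first paragraph is precisely the explicit verification of this for the normal form (\ref{normal}), where the bifurcating singularities $(\pm\epsilon(\mu),0)$ sit on the central variety and are swallowed by $U_k^\C$.

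The one step you should repair is your justification on $W_K$. The claim that $\{Q_{1,\mu}=0\}\cap\{Q_{2,\mu}=0\}$ lies in arbitrarily small neighborhoods of the saddle-nodes is false at $\mu=0$, where $Q_{1,0}=Q_{2,0}=Q$ and the two branches coincide along the whole edge (and for the model family (\ref{eq:ea family}) the two curves do not meet near $D$ at all, since $R\neq 0$ on $\{Q=0\}$ there). Moreover, locating the polar divisor is not by itself enough to conclude that $M_\mu\theta_\mu$ is non-vanishing: a foliation can perfectly well have singular points at smooth points of a polar component of multiplicity two, where the coefficient playing the role of the residue vanishes. The correct (and short) argument is by compactness and analyticity in $\mu$: at $\mu=0$ one computes, near the open arc, $M_0\theta_0=-R\prod_i P_i\,dQ+Q\,(\cdots)$, which is nonzero on the compact set $\overline{W_K\setminus\bigcup_k W_k}$ because $R$, the $P_i$ and $dQ$ do not vanish there; hence $M_\mu\theta_\mu$ has no zeros on this set for all sufficiently small $\mu$, which is exactly the paper's assertion that $E_L$ lies at a finite distance from the singularities. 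With that substitution your argument is complete and matches the published proof.
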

\begin{proof}
Indeed, by construction $E_L$ is covered by several charts, namely
neighborhoods of bifurcating saddle-nodes and neighborhoods of
compact subsets of separatrices on some positive distance from the
saddlenodes. In each of these sets the foliation defined by
$H_{\epsilon}$ can be brought analytically to a suitable normal
form, either to normal form of Lemma~\ref{normall} or just to the
standard flow box. Evidently, $E_L$
lies on a finite distance from singularities. \end{proof}

\begin{proof}[Proof of Proposition~\ref{prop:vvar}.] The cycle $\gamma$ can be continuously
moved to close leafs of the foliation by Proposition~\ref{prop:glueing v}.
It was proved in \cite{BM} that the pieces of $\gamma$ lying near
saddles or near separatrices lying on $P_i=0$ are annihilated by the
operator $Var_{a_1...a_k}$. Therefore the cycle
$Var_{a_1...a_n}\gamma$ is supported in
$U(L_E^\R)$. Moreover, it still lies in  (\ref{eq:smallbidisc})
in normal coordinates, so by Lemma~\ref{lem:normalpushing} it is
homotopically equivalent along the fibers to a cycle $\gamma'(h)$ lying in $E_L$.
\end{proof}

\section{Proof of Theorem \ref{var}}

Let $z$ be a holomorphic coordinate on a transversal $\Gamma$ to
$\{Q=0\}$.

\begin{lemma}\label{lem:trasvparam}
For the family $\theta_{\mu}$ the coordinate $z$ is a holomorphic  function of $-\frac
{1}{\omega(H_{\Gamma},\epsilon(\mu),\alpha(\mu))}$, where $\epsilon(\mu), \alpha(\mu)$
are some analytic functions of $\mu$ which are the same for any two transversals to the same arc of $D$.
\end{lemma}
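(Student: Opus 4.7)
The plan is to first establish the statement on a canonical transversal inside the normalizing chart at one endpoint of the arc, where the identity is essentially tautological; then propagate it to arbitrary transversals by a holomorphic leaf-matching map; and finally argue that the normal-form parameters $(\epsilon(\mu),\alpha(\mu))$ are independent of the chosen endpoint by interpreting them via global residues of $\theta_\mu$.

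Fix a saddle-node $m_k$ at one end of the arc. By Lemma~\ref{normall} there exist analytic functions $\epsilon(\mu),\alpha(\mu)$ and normalizing coordinates $(x,y)$ in which the arc is locally $\{x=-\epsilon(\mu)\}$, the companion invariant curve $\{Q_{2,\mu}=0\}$ is locally $\{x=\epsilon(\mu)\}$, and $H_\mu=y\cdot\widetilde{H}(x,\epsilon,\alpha)$. Use the canonical transversal $\Gamma_0=\{y=1\}$, on which $z=x$ parameterizes the curve and $H_\mu|_{\Gamma_0}(z)=\widetilde{H}(z,\epsilon,\alpha)$. The defining equation \eqref{omega} of the compensator is precisely the statement that $h\mapsto -1/\omega(h,\epsilon,\alpha)$ is the (local) inverse of $\widetilde{H}$, so on $\Gamma_0$ one has $z=-1/\omega(H_\mu|_{\Gamma_0},\epsilon,\alpha)$ and the lemma holds with $\phi=\operatorname{id}$.

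For an arbitrary transversal $\Gamma$ to the same arc, use Lemma~\ref{lem:boldeight}: the foliation is holomorphic and analytic in $\mu$ throughout $E_L$. Leaf-matching then provides a biholomorphic map $\phi:\Gamma_0\to\Gamma$ sending $p_0\in\Gamma_0$ on the leaf $\{H_\mu=h\}$ to the unique nearby point $\phi(p_0)\in\Gamma$ on the same leaf, continued analytically from a base point along the arc. This $\phi$ depends analytically on $\mu$. Since $H_\mu$ is constant on leaves, $H_\mu|_\Gamma\circ\phi=H_\mu|_{\Gamma_0}$, and substituting into the tautology above yields
$$z=\phi\bigl(-1/\omega(H_\mu|_\Gamma,\epsilon(\mu),\alpha(\mu))\bigr),$$
which is the required holomorphic expression on $\Gamma$.

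Finally, to show $(\epsilon(\mu),\alpha(\mu))$ does not depend on the chosen endpoint, compute the residues of $\theta_\mu=d\log H_\mu$ on the two invariant curves in the normalizing chart: on $\{x=-\epsilon\}$ the residue equals $\alpha-1/(2\epsilon)$, and on $\{x=\epsilon\}$ it equals $1/(2\epsilon)$. These local expressions coincide with the residues of $\theta_\mu$ on the globally defined curves $\{Q_{1,\mu}=0\}$ and $\{Q_{2,\mu}=0\}$ from the setting of Theorem~\ref{main:localan}, and the latter are intrinsic to $\theta_\mu$ and independent of the endpoint at which they are computed. Solving for $(\epsilon,\alpha)$ in terms of these globally defined residues shows agreement at both endpoints for $\mu\neq 0$; analyticity in $\mu$ extends the agreement to $\mu=0$. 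The main obstacle I expect is the branch bookkeeping: $\omega(\cdot,\epsilon,\alpha)$ is a Pfaffian, hence multivalued, so one must fix a coherent single-valued branch of $-1/\omega(H_\mu|_\Gamma,\epsilon,\alpha)$ on the transversal and check that the leaf-matching $\phi$ extends holomorphically along the whole arc; both are handled by choosing a consistent branch of $\log H_\mu$ in the simply-connected nonsingular neighborhood of the arc provided by Lemma~\ref{lem:boldeight}.
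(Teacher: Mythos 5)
Your proof follows essentially the same route as the paper: map an arbitrary transversal to the canonical one $\{y=1\}$ in the normalizing chart by the holonomy along the leaves, observe that there the restricted first integral is the function defining the compensator, and identify $(\epsilon(\mu),\alpha(\mu))$ intrinsically through residues of $\theta_\mu$ (which the paper states in the remark following the lemma) to get independence of the chosen saddle-node. The only slip is bookkeeping: the restriction of \eqref{normalfirst} to $\{y=1\}$ equals $\widetilde H$ only after the shift $x\mapsto x+\epsilon$ and rescaling $\epsilon\mapsto 2\epsilon$ (the paper's ``up to a linear change of $\epsilon,\alpha$''), which does not affect the conclusion.
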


\begin{remark}
Functions $\epsilon(\mu),\alpha(\mu)$ from lemma \ref{normall} and lemma \ref{lem:trasvparam} coincide.
\end{remark}

\begin{proof} Every transversal can be holomorphically mapped to a transversal lying in a normalizing chart of some
saddle-node of the polycycle $D$, just by the flow of the vector field tangent to the foliation.
Therefore, the claim follows from Lemma~\ref{normall}: when restricted to $\{y=1\}$, the first integral \eqref{normalfirst} becomes \eqref{omega}, up to a linear change of $\epsilon,\alpha$.\end{proof}

\begin{remark} The parameters $\epsilon(\mu),\alpha(\mu)$ are intrinsically
defined: $1/\epsilon(\mu)$ is the residue, and $\alpha(\mu)$ is the sum of
residues of the restriction to $\Gamma$ of the form $\theta_{\mu}$. For the family (\ref{eq:ea family})
the smooth irreducible
double divisor $\{Q=0\}$ is split into two close irreducible smooth
curves $\{Q=0\}$ and $\{Q+\epsilon R=0\}$, with residues $1/\epsilon$ and $\alpha - 1/\epsilon$ being the same for all transversals. In general, the residues are locally constant along $\{Q=0\}$ (e.g. by closedness of $\theta_{\mu}$), but can be different for different connected components. \end{remark}

\begin{lemma}\label{lemaomega}
For sufficiently small $\epsilon$ the mapping  $h\mapsto
-\frac{1}{\omega(h,\epsilon,\alpha)}$ is one to one on the interval
$[0,1]$. \qed
\end{lemma}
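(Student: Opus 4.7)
The plan is to prove the claim by showing that the inverse map $u\mapsto h=\widetilde H(u,\epsilon,\alpha)$ is strictly monotonic on the corresponding range of $u$. A direct differentiation of the explicit formula for $\widetilde H$ yields, for $\epsilon\neq 0$,
\begin{equation*}
\frac{\partial\log\widetilde H}{\partial u}=\frac{\alpha}{u}+\frac{1}{u(u-\epsilon)}=\frac{1+\alpha(u-\epsilon)}{u(u-\epsilon)},
\end{equation*}
with the natural limit $(1+\alpha u)/u^2$ at $\epsilon=0$; this is equivalent to rearranging the Pfaffian equation \eqref{eq:omega is Pfaffian} under the substitution $u=-1/\omega$. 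Hence the derivative has constant nonzero sign whenever $u$ is bounded away from $0$ and from $\epsilon$ and $\alpha$ is small, so $\widetilde H$ is strictly monotonic in $u$ there.

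Next I would verify that for $h\in[0,1]$ and $(\epsilon,\alpha)$ sufficiently close to the origin, the value $u(h,\epsilon,\alpha)$ solving $\widetilde H(u,\epsilon,\alpha)=h$ lies in such a region. At the reference point $\epsilon=\alpha=0$, the function $\widetilde H(u,0,0)=e^{-1/u}$ bijects $(0,\infty)$ onto $(0,1)$, and the inverse $u=-1/\log h$ extends continuously to $[0,1]$ with $u(0)=0$, $u(1)=+\infty$. By continuous dependence of $u(h,\epsilon,\alpha)$ on parameters on any compact subinterval of $(0,1)$, for small $(\epsilon,\alpha)$ the solution stays at positive distance from the singular values $0$ and $\epsilon$; the potential approach $u\to\epsilon^+$ as $h\to 0$ is ruled out by the flatness of $((u-\epsilon)/u)^{1/\epsilon}$ at $u=\epsilon^+$, which forces a uniform lower bound on $u-\epsilon$ whenever $h$ is bounded below.

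I expect the main obstacle to be the uniform treatment of the endpoints of $[0,1]$ as $(\epsilon,\alpha)\to 0$: one must ensure that the monotonicity estimate and the uniform separation of $u$ from the bad values persist up to the boundary. This I would handle by the explicit asymptotic comparison
\begin{equation*}
(1-\epsilon/u)^{1/\epsilon}=\exp\bigl(-1/u+O(\epsilon/u^2)\bigr),
\end{equation*}
valid uniformly on $\{u\ge c\epsilon\}$ for any fixed $c>1$, which shows that $\widetilde H(u,\epsilon,\alpha)$ is a small perturbation of $u^\alpha e^{-1/u}$ on the relevant range, and therefore inherits its strict monotonicity.
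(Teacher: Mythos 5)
The paper states this lemma without any proof (the statement itself carries the end-of-proof mark), so yours is a reconstruction; the natural route is indeed the one you take, via the sign of $\partial_u\log\widetilde H=\frac{1+\alpha(u-\epsilon)}{u(u-\epsilon)}$ with $u=-1/\omega$. But as written your argument has two problems. The harmless one: the claim that for small $(\epsilon,\alpha)$ the solution $u(h)$ stays at positive distance from the singular values $0$ and $\epsilon$ is false at the left endpoint --- from \eqref{omega} with $\alpha=0$ one has $u(h)=\epsilon/(1-h^{\epsilon})\to\epsilon$ as $h\to0^{+}$, so $u(0)=\epsilon$ exactly; your ``flatness'' remark only yields a lower bound on $u-\epsilon$ when $h$ is bounded below, which it is not on $[0,1]$. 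This does not actually endanger injectivity, because the exact derivative is positive (indeed tends to $+\infty$) as $u\to\epsilon^{+}$, so near $h=0$ you should argue directly from the sign of the exact expression instead of excluding that region.

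The genuine gap is at the other end, $h\to1$, where $u$ leaves every compact set. Your final step --- $\widetilde H$ is a small perturbation of $u^{\alpha}e^{-1/u}$ ``and therefore inherits its strict monotonicity'' --- fails for $\alpha<0$: the model $u^{\alpha}e^{-1/u}$ has logarithmic derivative $(1+\alpha u)/u^{2}$, which vanishes at $u=1/|\alpha|$, so the model itself is not monotone on the unbounded $u$-range needed to reach $h=1$; correspondingly $1+\alpha(u-\epsilon)$ changes sign at $u=\epsilon+1/|\alpha|$, and $\widetilde H$ has an interior maximum strictly less than $1$, so on this real branch the value $h=1$ is not even attained when $\alpha<0$. (Moreover, ``a small perturbation of a strictly monotone function is monotone'' is not a valid inference without a uniform positive lower bound on the derivative, which is precisely what is lacking near the critical point.) The statement therefore needs either $\alpha\ge0$, or --- which is all the paper ever uses --- restriction of $h$ to a small interval $[0,r]$: there $u=-1/\omega$ stays in a fixed bounded interval $[\epsilon,c]$ with $c\sim1/\log(1/r)$, on which $1+\alpha(u-\epsilon)>0$ uniformly for $|\alpha|$ small, and positivity of the exact derivative $\frac{1+\alpha(u-\epsilon)}{u(u-\epsilon)}$ on $(\epsilon,c]$ gives injectivity at once, with no asymptotic comparison needed.
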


Let $B_{\mu}$ be some small polydisc, and consider a
foliation $\mathcal{F}$ of $E_L\times B_{\mu}$ by
one-dimensional leaves
$\{H_{\mu}=h,\mu=\const\}.$ According
to Lemma~\ref{lem:boldeight} this is an analytic foliation without
singularities.

\begin{lemma}\label{lem:AIparam}
Let $\gamma$ be a closed connected curve on a leaf of $\mathcal{F}$
and assume that it can be continuously transported to nearby leafs.
Denote the resulting family by $\gamma_{\mu}(z)$, where
$z$ is the coordinate of  a point of the intersection of the cycle
and some fixed transversal to $\{Q=0\}$. Let $\eta_\mu$ be a meromorphic
one-form in $E_L\times B_{\mu}$ such that $Q_{1,\mu}Q_{2,\mu}\eta_\mu$ is holomorphic. Then there exist two analytic functions
$\epsilon(\mu),\alpha(\mu)$ such that the
integral $I_{\mu}(z)=\int_{\gamma_{\mu}(z)}\eta_\mu$ is a meromorphic
function of $z$ and depends
analytically on $\mu$.
\end{lemma}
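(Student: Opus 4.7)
The plan is to treat $I_\mu(z)$ as a period of a meromorphic one-form on the analytic nonsingular foliation of $E_L$ furnished by Lemma~\ref{lem:boldeight}, and to prove that its only $z$-singularities are located at the two values of $z$ corresponding to the invariant polar curves $\{Q_{1,\mu}=0\}$ and $\{Q_{2,\mu}=0\}$ and that each such singularity is at most a simple pole. First I would fix the framework: by Lemma~\ref{lem:boldeight} the foliation $\mathcal{F}$ on $E_L\times B_\mu$ is analytic without singularities and depends analytically on $\mu$, so it locally admits holomorphic flow-box coordinates analytic in $\mu$. Lemma~\ref{lem:trasvparam} identifies the transverse coordinate $z$ (on the fixed transversal) with $-1/\omega(H_\mu,\epsilon(\mu),\alpha(\mu))$ for a uniquely determined pair of analytic germs $\epsilon(\mu),\alpha(\mu)$; these are the parameters asserted in the statement. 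The curves $\{Q_{i,\mu}=0\}$, being components of the polar locus of $\theta_\mu$, are invariant leaves of $\mathcal{F}$, and so correspond to two analytic germs $z_1(\mu),z_2(\mu)$ of the transverse coordinate.

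On the open set $\{z\notin\{z_1(\mu),z_2(\mu)\}\}$, the leaf $L_z$ is disjoint from the polar divisor of $\eta_\mu$, hence $\eta_\mu|_{L_z}$ is holomorphic; the family $\gamma_\mu(z)\subset L_z$ is an analytic family of compact cycles obtained by flow-box transport along $\mathcal{F}$ from the fixed transversal, and so $I_\mu(z)$ is jointly holomorphic in $(z,\mu)$ by differentiation under the integral sign. To analyze $I_\mu$ near $z=z_i(\mu)$ I would choose local holomorphic coordinates $(u,v)$, analytic in $\mu$, near a point of $\{Q_{i,\mu}=0\}$ in which $v$ is a local equation of the polar leaf and $\{v=\const\}$ are the local leaves of $\mathcal{F}$. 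The assumption that $Q_{1,\mu}Q_{2,\mu}\eta_\mu$ is holomorphic forces $\eta_\mu=(f(u,v,\mu)\,du+g(u,v,\mu)\,dv)/v$ locally with $f,g$ holomorphic, and on the nearby leaf $\{v=v_0\}$ the restriction is $(f(u,v_0,\mu)/v_0)\,du$, which blows up only as $1/v_0$ when $v_0\to 0$. By transversality of the fixed transversal to $\mathcal{F}$, $v_0$ is an analytic function of $(z,\mu)$ with a simple zero in $z$ at $z_i(\mu)$.

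Splitting $\gamma_\mu(z)$ into the piece inside the flow-box chart, on which the integrand is $O(1/v_0)$ with uniformly bounded $du$-length, and the complement, on which $\eta_\mu$ is uniformly bounded because the cycle stays at positive distance from the poles, I obtain $|I_\mu(z)|=O(|z-z_i(\mu)|^{-1})$ near each $z_i(\mu)$. Riemann's removable-singularities theorem then upgrades the holomorphic function $I_\mu(z)$ on the complement of $\{z_1(\mu),z_2(\mu)\}$ to a meromorphic function of $z$ with at most simple poles at these two points, and joint analyticity in $\mu$ is preserved because every ingredient of the construction (flow-box coordinates, transport, the germs $z_i,v_0,f,g$) depends analytically on $\mu$. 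The main obstacle is the uniform geometric control of $\gamma_\mu(z)$ near $z=z_i(\mu)$: one must verify that the cycle stays in a fixed compact subset of $E_L$ and approaches the polar leaf in a tame way, so that the local $O(1/v_0)$ pointwise bound actually integrates to a simple pole; this is exactly what the explicit transport of Section~\ref{sec:transport} together with the pushing-away Lemma~\ref{lem:normalpushing} provide.
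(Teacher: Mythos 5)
There is a genuine gap: you assume exactly the point that the paper's proof is designed to establish, namely that $I_\mu(z)$ is a \emph{single-valued} function of $z$ on a full punctured neighborhood of the critical values $z_i(\mu)$. Your phrase ``the family $\gamma_\mu(z)\subset L_z$ is an analytic family of compact cycles obtained by flow-box transport'' only gives local continuation of the cycle near its initial leaf (this is the hypothesis of the lemma); flow-box charts do not by themselves rule out monodromy of the continued family as $z$ winds around $z_i(\mu)$, because the leaves accumulate on the invariant curve $\{Q_{i,\mu}=0\}$, whose intrinsic topology inside $E_L$ (figure eight) carries nontrivial foliation holonomy in general. If the continuation of $\gamma_\mu(z)$ had monodromy, your growth bound $|I_\mu(z)|=O(|z-z_i(\mu)|^{-1})$ would be compatible with non-meromorphic branches such as $(z-z_i)^{\beta}$ with $\beta\notin\Z$ or logarithmic terms, so the Riemann-type extension step would fail. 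The paper closes precisely this hole: working in the normalizing charts (and using Proposition~\ref{prop:vvar}/Lemma~\ref{lem:normalpushing} to keep the cycle away from the weak manifolds, where the projection degenerates), it writes the nearby leaves as graphs over the central leaf $\{Q=\mu=0\}$, projects $\gamma$ to a closed curve on that leaf, deduces from the transportability hypothesis that the holonomy of this projected loop is trivial, and then lifts the \emph{fixed} projected loop to every nearby leaf; this produces a continuation of the cycle defined for all $z$ near $z_i(\mu)$ at once, hence a univalued $I_\mu(z)$, and only then does the bounded-length/polynomial-growth argument (which you do carry out, in fact more precisely than the paper) yield meromorphy.

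A smaller, fixable inaccuracy: near $\mu=0$ the two curves $\{Q_{1,\mu}=0\}$ and $\{Q_{2,\mu}=0\}$ collide ($Q_{1,0}=Q_{2,0}=Q$), so your local normal form $\eta_\mu=(f\,du+g\,dv)/v$ with $f,g$ holomorphic jointly in $\mu$ is not valid uniformly; the denominator is $Q_{1,\mu}Q_{2,\mu}$, i.e.\ essentially $v(v-c(\mu))$ up to units, and at $\mu=0$ the pole is double. This only changes ``simple pole at each $z_i(\mu)$'' into a statement like ``$(z-z_1(\mu))(z-z_2(\mu))I_\mu(z)$ is bounded,'' which still gives meromorphy and analytic dependence on $\mu$, but the uniformity near the collision should be stated that way rather than pointwise in $\mu$.
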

\begin{proof}
A connected component of the open set $E_L\times B_{\epsilon,\alpha}$ containing $\gamma_{\mu}(z)$ is covered by two normalizing charts of neighborhoods of saddle-nodes (with a
neighborhood of weak manifold removed)  and a neighborhood of the
connection between saddle-nodes. In each of the above charts leafs
of our foliation are graphs of (multivalued) functions $x(y,h)$ of
the coordinate $y$ along the leaf $\{Q=0\}$. Therefore in each chart
the curve $\gamma$ can be written as a curve
$(x(t),y(t),\mu)$, and we can define its projection
curve $(0,y(t),0)$ lying on $\{Q=\mu=0\}$. It is important here that by Proposition \ref{prop:vvar} we can keep the cycle away from the weak manifold where the projection is not regular.

We can join $\gamma$ and its projection by a continuous family of
closed curves lying on leaves of foliation using the explicit normalizing charts.
We can do it in each normalizing chart, and the condition of trivial  holonomy of $\gamma$ guarantees that these pieces will glue together. This implies that the holonomy of the
projection curve is trivial, so $\gamma$ can be continued from $L$ to all
nearby leaves. Therefore $I(z)$ is univalued in a neighborhood of
$z=\mu=0$. Since the length of the continuation is
bounded, the growth of $I(h)$ is at most polynomial.
\end{proof}

\begin{lemma}\label{lem:logrotexp}
Define the functions $g_\beta(z,\epsilon,\alpha)$ by
$$g_\beta\left(-\frac{1}{\omega(he^{i\beta},\epsilon,\alpha)},\epsilon,\alpha\right)=
-\frac{1}{\omega(h,\epsilon,\alpha)}.
$$
Then for any $\beta_0>0$ and any neighborhood $W\subset\C$ of the
origin there exists a small tridisc
$W'\subset\C^3_{z,\epsilon,\alpha}$ near the origin such that the
function $g_{\beta'}(z,\epsilon,\alpha)$ maps
$W'\times\{|\beta|<\beta_0\}$ holomorphically into $W$.\end{lemma}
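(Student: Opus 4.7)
The plan is to recognize $g_\beta$ as the time-$(-i\beta)$ map of a holomorphic vector field on $\bbC_z$ depending on the parameters $\epsilon,\alpha$, and then to invoke the standard theorem on holomorphic dependence of ODE solutions on time, initial conditions, and parameters.

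First I would derive the vector field. Differentiating the defining identity $\log h = \log\widetilde H(z,\epsilon,\alpha)$ with respect to $\log h$ produces the autonomous ODE
\begin{equation*}
\frac{dz}{d(\log h)} \;=\; V(z,\epsilon,\alpha), \qquad V(z,\epsilon,\alpha) := \frac{z(z-\epsilon)}{1+\alpha(z-\epsilon)},
\end{equation*}
where $V$ is holomorphic on a fixed polydisc around the origin (and the formula extends analytically to $\epsilon=0$, giving $V=z^2/(1+\alpha z)$). Since $g_\beta$ by construction shifts $\log h$ by $-i\beta$ while keeping $(\epsilon,\alpha)$ fixed,
\begin{equation*}
g_\beta(z,\epsilon,\alpha) \;=\; \Phi_{-i\beta}(z,\epsilon,\alpha),
\end{equation*}
with $\Phi_\tau$ the complex time-$\tau$ flow of $V$. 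Sanity check: at $\epsilon=\alpha=0$, $V=z^2$ integrates to $\Phi_\tau(z)=z/(1-\tau z)$, and $g_\beta(z,0,0)=z/(1+i\beta z)$ as expected.

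Next I would establish the key a priori bound. Because $V$ vanishes identically on $\{z=0\}$ and is holomorphic, there are $r_0,C>0$ with
\begin{equation*}
|V(z,\epsilon,\alpha)|\;\le\;C\,|z|\,(|z|+|\epsilon|)\qquad\text{for }|z|,|\epsilon|,|\alpha|\le r_0.
\end{equation*}
Given $W$ and $\beta_0$, I would pick $r>0$ with $\{|z|<r\}\subset W$, $r\le r_0$, and $3Cr\beta_0<1$, and set $W':=\{|z|,|\epsilon|,|\alpha|<r/2\}$. A direct bootstrap along complex-time segments then shows that every trajectory of $\dot z=V(z,\epsilon,\alpha)$ starting in $W'$ stays inside $\{|z|\le r\}\subset W$ for all $|\tau|\le\beta_0$, providing global existence of $\Phi_\tau$ on $W'\times\{|\tau|\le\beta_0\}$.

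The only nontrivial ingredient is this uniform entrapment, and it is handed to us by the factor $z$ in $V$: shrinking the initial data buys a uniform bound on $|V|$ proportional to the data, hence a uniform time of existence. Once global existence is in place, holomorphic joint dependence of $\Phi_\tau$ on $(\tau,z_0,\epsilon,\alpha)$, and therefore of $g_\beta$ on $(z,\epsilon,\alpha,\beta)$, is the content of the standard holomorphic-ODE theorem (Picard iteration converges uniformly on the invariant region). This finishes the lemma.
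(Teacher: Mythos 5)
Your proof is correct and follows essentially the same route as the paper: you identify $g_\beta$ as the complex-time flow of the holomorphic vector field obtained by differentiating the defining relation in the chart $u=\log h$ (your $\frac{z(z-\epsilon)}{1+\alpha(z-\epsilon)}\partial_z$ is the paper's $\frac{z^2+\epsilon z}{1+\alpha z}\partial_z$ up to an affine change of variable), and conclude from the fact that $z=0$ is a fixed point together with analytic dependence of ODE solutions on time, initial conditions and parameters. The only difference is that you spell out the quantitative entrapment estimate guaranteeing existence of the flow for all $|\beta|\le\beta_0$, which the paper leaves implicit.
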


\begin{proof}
The function $g_\beta(z,\epsilon,\alpha)$ is the $i\beta$-time flow
of the vector field $\widetilde{v'}=\frac{z^2+\epsilon z}{1+\alpha
z}\partial_z$, which is just the vector field  $v_x$ of
(\ref{eq:normalautomorphisms}) up to an affine change of variables. Therefore the
claim follows from the fact that $x=0$ is a fixed point of
$\widetilde{v'}$ for $\epsilon=\alpha=0$ and analytic dependence of
the solution of ODE on the initial conditions and
parameters.\end{proof}

\begin{proof}[Proof of Theorem \ref{var}.]

By Proposition~\ref{vvar} and the definition of the operator
$Var_{a_i}$ the cycle
$\gamma'=Var_{a_1,...,a_n}\gamma(h,\mu)$ is a union of
several disjoint cycles $\gamma'_i$ lying in $E_L$ on leaves
$\{H=he^{i\beta_i}\}$, for finitely many $\beta_i\in \R$. Since
$\gamma'$ can be continued by $h$, the cycles $\gamma'_i$ also can
be continued by $h$. Therefore by Lemma~\ref{lem:AIparam} and Lemma~\ref{lem:trasvparam} the
function $Var_{a_1,...,a_n}I_{\mu}$ is a finite sum of
$f_i(-\frac{1}{\omega(he^{i\beta_i},\epsilon_i(\mu),\alpha_i(\mu))},\mu)$,
where each $f_i$ is holomorphic in some bidisc at the origin. Then
Lemma~\ref{lem:logrotexp} implies it is an analytic  function of
$-\frac{1}{\omega(h,\epsilon,\alpha)}$.

\end{proof}

\section{Proof of Theorem~\ref{main:localan}}
\begin{proposition}\label{mainlemma}
Application of the operator $Var_{a}$ decreases the number of
zeros of $I_{\mu}(h)$ by at most some finite number uniformly bounded from
above and depending on the family $\Theta$ only.
\end{proposition}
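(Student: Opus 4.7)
The plan is to adapt Petrov's argument-principle trick to the present setting, using the analytic continuation of $I_\mu$ already provided by Section~\ref{sec:transport} and the (forthcoming) asymptotic expansion of Proposition~\ref{pr:zersing} to handle the behaviour near $h=0$. The overall strategy is to bound the number of real zeros of $I_\mu$ on $(0,r)$ by the variation of $\arg I_\mu$ along the boundary of a suitable slit domain, and then to bound that boundary variation by the number of zeros of $Var_a I_\mu$ plus an error that is uniform in the family~$\Theta$.

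Concretely, I would work in the slit annulus
$$\Omega=\{h\in\bbc:\rho<|h|<r,\ |\arg h|<\pi a\}$$
on the universal cover of the punctured disc, where $\rho>0$ will be taken arbitrarily small and $r$ is the radius from Theorem~\ref{main:localan}. By Proposition~\ref{prop:glueing v} the integral $I_\mu$ continues analytically to $\Omega$, and since $I_\mu$ is real on $(0,r)$, Schwarz reflection gives the key identity
$$I_\mu(he^{-i\pi a})=\overline{I_\mu(he^{i\pi a})},\qquad h\in(\rho,r),$$
so that the pure imaginary part $\im\bigl(I_\mu(he^{i\pi a})\bigr)$ equals $\tfrac{1}{2i}Var_a I_\mu(h)$. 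Every real zero of $I_\mu$ in $(\rho,r)$ lies in~$\Omega$; by the argument principle their number is at most $\frac{1}{2\pi}|\Delta_{\partial\Omega}\arg I_\mu|$, so it suffices to estimate this boundary variation on each of the four arcs of $\partial\Omega$.

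On the two straight edges $\gamma_{\pm}=\{te^{\pm i\pi a}:t\in(\rho,r)\}$ I would invoke the elementary Petrov lemma: for a function holomorphic along an arc, the variation of its argument is bounded by $\pi(N+1)$, where $N$ is the number of zeros of its imaginary part on the arc. By the Schwarz identity above, the imaginary part on $\gamma_+$ (and likewise on $\gamma_-$) is proportional to $Var_a I_\mu$, so the contribution of $\gamma_\pm$ to the argument variation is at most $2\pi(\#\{t\in(\rho,r):Var_a I_\mu(t)=0\}+1)$. On the outer arc $|h|=r$ the function $I_\mu$ is holomorphic on an open set and its variation of argument is bounded by a constant depending only on $\Theta$ (by compactness of the closed $r$-circle in the parameter-dependent family and the fact that $I_\mu$ cannot vanish identically there for $r$ outside a discrete set, which we may fix once and for all). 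This is the point where one uses that Varchenko--Khovanskii away from~$0$ already gives uniform bounds, as noted in the excerpt after Theorem~\ref{main}.

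The main obstacle, and the step I expect to consume most work, is the inner arc $|h|=\rho$: I would need to show that the variation of $\arg I_\mu$ along this circle is bounded by a constant independent of $\rho$ (so that after taking $\rho\to 0$ all real zeros in $(0,r)$ are captured). This is exactly the content that Proposition~\ref{pr:zersing} is invoked to supply: a suitable asymptotic expansion for $Var_{a_1,\dots,a_k}I_\mu$ (and hence, by induction on the number of variations already taken, for $I_\mu$ itself) whose leading terms are combinations of powers, logarithms, and the compensator~$-1/\omega$. Once such an expansion with uniformly bounded number of terms and uniformly bounded coefficients is established, the variation of argument on $|h|=\rho$ is bounded by the number of monomials in the expansion times a universal constant, uniformly in $\rho$ and $\mu$. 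Putting the four pieces together yields
$$\#\{\text{zeros of }I_\mu\text{ in }(0,r)\}\le \#\{\text{zeros of }Var_a I_\mu\text{ in }(0,r)\}+C(\Theta),$$
with $C(\Theta)$ depending analytically on $\mu$ in a compact polydisc around the origin, hence uniformly bounded, which is the assertion of the proposition.
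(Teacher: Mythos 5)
Your proposal follows essentially the same route as the paper's proof: the argument principle on a sector of the universal cover, Petrov's reflection identity to bound the argument increment along the radial segments by the number of zeros of $Var_a I_\mu$, Proposition~\ref{pr:zersing} to control the inner arc near $h=0$, and a uniform-in-$\mu$ bound on the outer arc. The only differences are minor: the paper fixes the inner radius once and for all (since Proposition~\ref{pr:zersing} shows the zeros do not accumulate at $h=0$) instead of letting $\rho\to 0$, and it bounds the outer-arc increment by Gabrielov's theorem together with the analytic dependence of the compensator on parameters (Proposition~\ref{ancomp}), which is the precise finiteness input behind the compactness appeal you sketch.
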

\begin{proof}
To prove the Proposition, consider the sector $\{r<|h|<1,
|\arg h|\le \alpha\pi\}$. Proposition~\ref{pr:zersing} guarantees
that the zeros of $I(h)$ do not accumulate to $0$, so for $r$
small enough this sector includes all zeros of $I(h)$ on $(0,1)$. To
count the number of zeros of $I(h)$ in this sector apply the
argument principle. As in \cite{BM,N}, the increment of argument of
$I(h)$ on the counterclockwise arc $\{|h|=1, |\arg h|\le
\alpha\pi\}$ passed counterclockwise is uniformly bounded from above
by Gabrielov's theorem \cite{gabrielov}. Here we need the analytic dependence of the
compensator function $\omega(u,\epsilon, \alpha)$ on
the parameters $\epsilon, \alpha$, when $|u|=const$. This is proved in Proposition
\ref{ancomp}.

Proposition~\ref{pr:zersing} below implies that the increment of argument along the small arc $\{|h|=r, |\arg h|\le \alpha\pi\}$ passed clockwise is uniformly bounded from above as well. The classical Petrov's argument now shows that the increment of argument of $I(h)$ along the segments $\{r<|h|<1, |\arg h|=\pm\alpha\pi\}$ is bounded from above by the number of zeros of $Var_{\alpha}I(h)$, which  proves the Proposition.\\
\end{proof}

\begin{proof}[End of the proof of Theorem~\ref{main:localan}]
Theorem~\ref{main:localan} follows from Proposition~\ref{mainlemma},  Theorem~\ref{var}
and the fact that the number of zeros of
$$
f=\sum_{i} f_{i}(-\frac{1}{\omega(h,\epsilon_i(\mu),\alpha_i(\mu))},\epsilon_i(\mu),\alpha_i(\mu),\mu),
$$
i.e. of the right-hand side of (\ref{eq:thmvar}), on some interval $(0,r)$
is uniformly bounded for all sufficiently small $\mu$. The latter claim is a direct application of fewnomials theory of  Khovanskii \cite{kh}: since all $-\frac{1}{\omega(h,\epsilon,\alpha)}$ are Pfaffian functions, see (\ref{eq:omega is Pfaffian}), the upper bound for this number of zeros can be given, using Rolle-Khovanskii arguments of \cite{fewnomials}, in terms of the
number of zeros of some polynomials in $F_i$ and their derivatives. The latter are uniformly bounded by Gabrielov's theorem \cite{gabrielov}.\end{proof}

The aim of the following Proposition \ref{pr:zersing} is to describe
the asymptotics of  the pseudo-abelian integral $I(h)$ and its
variation at $h=0$. This justifies the application of
Petrov's argument in the proof of Theorem~\ref{main}.

The regular form of the singularity together with an a priori bound
for the growth of the integral $I(h)$ gives us an estimate for the
increment of the argument along arcs of small circles around $h=0$.
Note that the singularity at $\ep\neq0$ case was already
investigated \cite{BM,N}. Thus, it remains to investigate the
non-trivial exponential case $\ep=0$.

\begin{proposition}
\label{pr:zersing} Let $I(h)$ be a non-zero multi-valued holomorphic
function on a neighborhood of $h=0$ verifying the iterated variation
relation \eqref{eq:thmvar} for some $k$ and satisfying the a priori
bound
\begin{equation}
\label{mdgrowth} |I|\leq C |h|^{-N}
\end{equation}
in sectors $\{|\arg h|\le A\}$.

Then $I(h)$ has a leading term of the form $h^{\alpha}(\log h)^k$ or
of the form $(\log h)^{-k}(\log (\log h))^l$, with $k,l>0$.
Moreover, for any $N^\prime > N$ the increment of argument of $I(h)$
along the arc $C_0=\{r e^{i\phi}\vert \phi\in[-A,A]\}$ traveled
clockwise can be estimated from above
\begin{equation}
\label{zerest} \Delta \Arg_{C_0} I \leq 2N^\prime A,
\end{equation}
for all sufficiently small $r>0$.
\end{proposition}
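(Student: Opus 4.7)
The plan is to extract the leading-term asymptotic of $I(h)$ at $h=0$ from the iterated variation relation \eqref{eq:thmvar} together with the growth bound \eqref{mdgrowth}, and then bound $\Delta\Arg_{C_0}I$ by direct asymptotic substitution. First I would analyze $J(h)=Var_{a_1,\dots,a_k}(I_\mu)(h)$, which by \eqref{eq:thmvar} is a finite sum of terms $f_i(-1/\omega(h,\epsilon_i,\alpha_i),\epsilon_i,\alpha_i,\mu)$, each meromorphic in its first argument. Expanding $f_i$ as a Laurent series in $u_i=-1/\omega(h,\epsilon_i,\alpha_i)$ and using the compensator asymptotics derivable from \eqref{eq:omega is Pfaffian} — namely $u_i\sim -\epsilon_i h^{-\epsilon_i}$ when $\epsilon_i\neq 0$ and $u_i\sim -1/\log h$ when $\epsilon_i=0$ — one reads off the leading term of $J$: either of power type $h^\beta(\log h)^j$ (whenever some $\epsilon_i\neq 0$ contributes) or purely logarithmic $(\log h)^n$ (the exponential case, where only $\epsilon_i=0$ terms contribute).

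Next I would recover the leading term of $I$ by inverting the iterated variation on asymptotic monomials. On the class $h^\alpha(\log h)^j$ with $\alpha\notin\bigcup_i(1/a_i)\bbz$ the operator $Var_{a_1,\dots,a_k}$ is an invertible multiplier (yielding a factor $\prod_i 2i\sin(\pi a_i\alpha)$ modulo lower log-order), so the leading power-type term of $I$ has the same exponent $\alpha$ as the leading power-type term of $J$; the growth bound $|I|\leq C|h|^{-N}$ then forces $\re\alpha\geq -N$ and gives the first stated form $h^\alpha(\log h)^k$. In the purely exponential case one inverts inside the algebra generated by $\log h$ and $\log\log h$: iterated $Var_a$ reduces the $\log h$ degree by one each time, and resonant inversion against zero eigenvalues forces the appearance of the nested logarithm $\log\log h$, giving the second form $(\log h)^{-k}(\log\log h)^l$ with $k>0$ again fixed by \eqref{mdgrowth}.

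Finally, the argument bound follows by direct substitution. Along $C_0$ traversed clockwise $\arg h$ decreases by $2A$, so for $I\sim c\,h^\alpha(\log h)^{k'}$ one has
$$\Delta\Arg_{C_0}I = -2A\re\alpha + k'\cdot O(1/|\log r|) \leq 2AN + o(1) \leq 2N'A$$
for every $N'>N$ once $r$ is sufficiently small, using $\re\alpha\geq -N$ and the fact that $\arg\log h$ varies by $O(1/|\log r|)$ across $C_0$; for a leading term of the second form the increment is $O(1/|\log r|)\to 0$, trivially below $2N'A$. The hard part is the middle step of extracting the leading term of $I$ from that of $Var_{a_1,\dots,a_k}I$: in the power case this is routine once one controls the resonant exponents via the growth bound, but in the exponential case $Var_{a_1,\dots,a_k}$ has a substantial kernel in the algebra of logarithmic transcendents, and a careful induction on log-order combined with \eqref{mdgrowth} is needed to rule out more deeply nested logarithmic terms and close the classification.
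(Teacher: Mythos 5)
Your outline of the first and last steps (reading off the singular behaviour of $Var_{a_1,\dots,a_k}I$ from the compensator expansion, and converting a leading term $h^\alpha(\log h)^{k'}$ or $(\log h)^{-k}(\log\log h)^l$ with $\re\alpha\ge -N$ into the bound \eqref{zerest}) matches the paper. But the middle step, which you yourself flag as ``the hard part,'' is exactly where the proposal has a genuine gap: you propose to ``invert the iterated variation on asymptotic monomials,'' which presupposes that $I$ already admits an asymptotic expansion in the scale generated by $h^\alpha$, $\log h$, $\log\log h$. That is precisely what the proposition has to establish; a priori $I$ is only a moderate-growth multivalued function satisfying \eqref{eq:thmvar}, and nothing prevents it from behaving wildly within the bound $|I|\le C|h|^{-N}$. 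Moreover, the kernel of $Var_{a_1,\dots,a_k}$ is not confined to ``the algebra of logarithmic transcendents'': in the logarithmic chart $u=\log h$ the homogeneous solutions include, e.g., series $\sum_l a_l e^{lu/a}$ (germs of meromorphic functions of fractional powers of $h$), so formal inversion with the multiplier $\prod_i 2i\sin(\pi a_i\alpha)$, or resonance bookkeeping inside $\C[\log h,\log\log h]$, cannot by itself pin down the leading term of $I$, and you never control the error made when the inversion is only carried out to finite order.

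The paper closes this gap by a two-step construction of an explicit \emph{particular} solution of the difference equation $\diff_{a_1,\dots,a_k}F=Var_{a_1,\dots,a_k}I$ in the chart $u=\log h$: first a ``principal part'' $P\in\C[u,\tfrac1u,\log u]$ solving the equation up to a remainder $O(|u|^{-A})$ with $A$ large (Lemma \ref{lem:prf}, which uses the compensator expansion of Lemma \ref{lem:cmpan} and the fact that the leading term of a solution of $\diff_a F=p$ is $\tfrac{1}{2\pi i a}\int p$ --- note that on these logarithmic monomials $\diff_a$ acts like a derivative, not like your $\sin$-multiplier, which is only relevant for powers of $h$); and second, an explicit convergent iterated-sum solution $F_\pm$ for the small remainder, with growth estimates valid on quarter-planes $Q_\pm$ (Lemma \ref{lem:sums}). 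Then $I-P-F_+$ satisfies the \emph{homogeneous} equation with moderate growth, and its leading term $h^\alpha\log^m h$ is supplied by Lemma 4.8 of \cite{BM}; the classification and the bound \eqref{zerest} follow. To make your argument complete you would have to reproduce the content of these two lemmas (high-order solvability in $\C[u,1/u,\log u]$ plus a convergent-sum remainder with estimates) and invoke the \cite{BM} classification of moderate-growth homogeneous solutions; the ``careful induction on log-order'' you allude to is not a substitute for either ingredient.
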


\section{Generalized Roussarie-Ecalle compensator}
\label{sec:apn1}

In this section we prove the existence of the generalized
Roussarie-Ecalle compensator \eqref{omega}. We start with the
following, general statement

\begin{lemma}
\label{lem:inverse} Let $r(x)$ be a rational function. There exist a
holomorphic, multivalued, endlessly continuable function $\omega(z)$
which satisfies the following equation
\begin{equation}
\label{inveq} \omega\prime (z) = r(\omega(z)).
\end{equation}
The ramification set of the function $\omega(z)$ is discrete along
any path.
\end{lemma}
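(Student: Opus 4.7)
The ODE $\omega'=r(\omega)$ is autonomous, so the natural approach is separation of variables: write $dz=d\omega/r(\omega)$ and define $\omega$ implicitly as the inverse of an antiderivative
\[
F(\omega)=\int\frac{d\omega}{r(\omega)}.
\]
The first step is to make $F$ precise. Using partial fractions, $1/r(u)=P(u)+\sum_j\sum_{k\geq 1} c_{j,k}(u-a_j)^{-k}$, where $P$ is a polynomial and $a_1,\dots,a_n$ are the zeros of $r$. Integrating term by term yields
\[
F(\omega)=Q(\omega)+\sum_{j=1}^n c_{j,1}\log(\omega-a_j),
\]
for an explicit rational function $Q$ whose poles lie only among the $a_j$. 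Hence $F$ is a multivalued holomorphic function on $\bbc\setminus\{a_1,\dots,a_n\}$, and its natural Riemann surface $\mathcal R$ is obtained from $\bbc\setminus\{a_j\}$ by gluing the sheets of the logarithms.

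The second step is local inversion. The derivative $F'(\omega)=1/r(\omega)$ vanishes only at the poles of $r$; away from this finite set (and away from the $a_j$) the inverse function theorem produces a holomorphic local inverse $\omega=F^{-1}$ on any simply connected neighborhood, and the chain rule gives $\omega'(z)=1/F'(\omega)=r(\omega)$, as required. Different local inverses differ either by a choice of branch of the logarithms in $F$ or, near a zero of $r$, by an algebraic monodromy; both are accounted for by analytic continuation on $\mathcal R$.

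The third and main step is to establish endless continuability with discrete ramification. Fix a path $\gamma:[0,1]\to\bbc$ and an initial germ of $\omega$. Continuation of $\omega$ along $\gamma$ is obstructed at a parameter $t_0$ only if the lifted path $t\mapsto\omega(\gamma(t))$ either approaches a zero $b$ of $r$ (giving an algebraic branch point coming from a critical point of $F$, since $F(\omega)-F(b)$ vanishes to higher order there) or escapes to one of the logarithmic singularities $a_j$, producing a logarithmic branch point. Both kinds of obstruction correspond to the lifted path meeting one of the finitely many special points $\{a_1,\dots,a_n\}\cup\{b:r(b)=0\}\cup\{\infty\}$ on the Riemann sphere $\overline\bbc_\omega$. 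Since a continuous path on $\overline\bbc_\omega$ hits any fixed finite set only in a closed subset of isolated $t$-values, the set of obstruction parameters is discrete in $[0,1]$, so $\omega$ continues along a suitable perturbation and its ramification locus is discrete along every path.

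The main obstacle is psychological rather than technical: one might worry that because $F$ has infinitely many sheets from the logarithms, the critical values of $F$ in the $z$-plane form a dense set and obstruct continuation. The resolution is exactly that ``discrete along any path'' is a statement about preimages under the path, not about the image of the singular set in $\bbc_z$: working naturally on $\mathcal R$ (equivalently, tracking both $z$ and the current value of $\omega$), each path meets only finitely many critical points in each compact time-interval, and this is all that the statement demands.
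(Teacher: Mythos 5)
Your overall strategy---invert the primitive $F(\omega)=\int d\omega/r(\omega)$---is viable and is in fact the paper's argument in local form (the paper integrates the equation in normal-form coordinates near each zero and pole of $r$ on $\overline{\bbC}$ and reads everything off the explicit primitive). But your third step contains a genuine zero/pole confusion that breaks the identification of the ramification locus. Since $F'=1/r$, the critical points of $F$ are the \emph{poles} of $r$ (as you yourself say in step 2), and these are exactly where $\omega$ ramifies: the trajectory reaches a pole of $r$ at a finite value of $z$ and branches algebraically there (in the paper's normal form $\xi'=a\xi^{n}$, $n\le-1$, one gets $\xi=\bigl(c\,(z-z_0)\bigr)^{1/(1-n)}$). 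At a \emph{zero} $a_j$ of $r$, by contrast, $F$ is not holomorphic at all (logarithmic, or pole-plus-logarithm, singularity), so the phrase ``algebraic branch point coming from a critical point of $F$, since $F(\omega)-F(b)$ vanishes to higher order there'' is meaningless at such a point; moreover $|F(\omega)|\to\infty$ as $\omega\to a_j$, so a lift lying over a bounded piece of the path can never reach $a_j$: zeros of $r$ produce no branch point at finite $z$. This is precisely the paper's dichotomy (``for $n\ge1$ the solution cannot reach $\xi=0$''). As written, your obstruction set $\{a_1,\dots,a_n\}\cup\{b:r(b)=0\}\cup\{\infty\}$ lists the zeros of $r$ twice and omits the finite poles of $r$, i.e.\ it misses the actual ramification points.

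The discreteness step is also not yet a proof. The assertion that a continuous path in $\overline{\bbC}$ meets a finite set only at isolated parameter values is false in general (the preimage can be any closed set), and in any case the lift ceases to exist at the first obstruction, so one cannot speak of later encounters before showing that the continuation passes through the singularity. What is needed, and what the paper supplies through the local normal forms, is the local statement: near a pole of $r$ the solution is an explicit finitely ramified germ in $z-z_0$, so $z_0$ is an isolated algebraic branch point through which continuation (along a slightly deformed path) proceeds, while near a zero of $r$ the solution either leaves the disc or stays in it and remains unramified; combined with compactness of $\overline{\bbC}$ minus the discs around the singular points, which gives existence for a definite time away from them, this yields endless continuability with ramification discrete along any path. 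Your appeal to the Riemann surface $\mathcal{R}$ of $F$ in the $\omega$-variable does not substitute for this, because the branching of the inverse occurs over critical values in the $z$-variable and is not visible on $\mathcal{R}$.
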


\begin{proof}
Consider the Riemann sphere $\overline{\bbC}$ with small disjoint,
open discs $D_1,\ldots,D_k$ centered at zeroes and poles of $r(x)$.
Let the initial condition $x_0\in \overline{\bbC}$ be chosen away
from these discs. Let $z=l(s), \ s\in [0,1]$, $l(0)=0$ be a path in
$\bbC$ starting at $z=0$. Since the domain $\overline{\bbC}\setminus
(\sqcup_j D_j)$ is compact, the solution of the equation
$\omega\prime = r(\omega)$ is well defined along $l$ at least until
it enters to some disc $D_j$., i.e. for $s\in[0,s_j]$. The solution
can be extended to a holomorphic function in a neighborhood of this
segment of $l$.

In a disc $D_j$ there exists a holomorphic coordinate $\xi$ such
that the equation takes the following (normal) form
\begin{equation*}
\label{zpeq} \xi\prime = \begin{cases}
a \xi^{n},\quad  a\in\bbC^* & \text{for } n\leq -1\\
a \xi,\quad  a\in\bbC^* & \\
\left( r \xi^{-1}+ a \xi^{-n} \right)^{-1},\quad  a\in\bbC^*,\
r\in\bbC & \text{for } n\geq 2.
\end{cases}
\end{equation*}
The solution reads respectively
\[
t-t_0 = \begin{cases}
a^{-1}(1-n)^{-1} \xi^{1-n} \\
a^{-1} \log \xi  \\
 r \log \xi+ \tfrac{a}{1-n} \xi^{1-n}
\end{cases}
\]
Now, if $n\geq 1$, the solution $\omega$ can not reach the singular point $\xi = 0$, so it either leaves the disc $D_j$ or stays inside (and is well defined) for $s\in[s_j,1]$. If $n\leq -1$, then the singular point $\xi=0$ corresponds to the ramification of the solution $\omega$.\\
\end{proof}

Now we return to the particular problem related to the existence of
the compensator. One checks that the compensator function $\omega$ in
the logarithmic coordinate $u=\log h$ must satisfy the following
differential equation
\begin{equation}
\label{compeq}
\omega\prime (u) = \frac{\omega
(\omega-\ep)}{1+\alpha(\omega-\ep)}.
\end{equation}
Thus, by Lemma \ref{lem:inverse}, \emph{for fixed} $\ep$ the
solution is a well defined multivalued holomorphic function. The
dependence on $\ep$ is not automatically analytic since in the
equation \eqref{compeq} the collision of two zeroes (at $\omega=0$)
and the collision of zero and pole (at $\omega=\infty$) occur for
$\ep=0$. We overcome these difficulties by taking respective
blow-ups. More precisely, the following proposition holds. Recall
that the lagaritmic chart $u=\log h$ assumed.
\begin{proposition}
\label{ancomp} There exists a positive constant $l_0$ and three
functions $F_S(\ep,s)\ F_E(\ep,u),\ F_N(\ep,w)$ analytic in $\ep$,
analytic multivalued in $s,u,w$ respectively such that in a
neighborhood of any $u_0$ the compensator $\omega(\ep;u)$ has one of
the following forms (depending on the value $\omega(\ep;u_0)$)
\begin{equation}
\label{compsol}
\omega(e^u,\ep,\alpha) = \begin{cases}
\ep\, F_S(\ep,\ep\,(u-u_0))\\
F_E(\ep,u-u_0)\\
\alpha^{-1}\, 1/ F_N(\ep,\alpha^{-1} (u-u_0))
\end{cases}.
\end{equation}
Moreover, these expressions are valid for all paths starting at
$u_0$, of length bounded by $l_0$.
\end{proposition}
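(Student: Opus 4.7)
The plan is to start from Lemma~\ref{lem:inverse}, which already produces $\omega$ as an endlessly continuable multivalued solution of $\omega' = r(\omega)$ with $r(\omega) = \omega(\omega-\epsilon)/(1 + \alpha(\omega-\epsilon))$, and to refine it by keeping careful track of the joint dependence on $(\epsilon,\alpha)$. The singular points of $r$ on $\overline{\bbC}$ are the two zeros $\omega = 0$ and $\omega = \epsilon$ and the pole $\omega = \epsilon - 1/\alpha$. Two coalescences obstruct a naive application of analytic dependence on parameters: the zeros collide at $\omega = 0$ as $\epsilon \to 0$, and the pole escapes to $\infty$ as $\alpha \to 0$. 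Each coalescence is to be resolved by a weighted blow-up, producing the three local normal forms $F_E$, $F_S$, $F_N$ in complementary regions of the $\omega$-sphere.

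First I would handle the easy middle form. For $\omega_0 = \omega(\epsilon;u_0)$ in a compact set $K$ bounded away, uniformly in small $(\epsilon,\alpha)$, from all four candidate singular points $0, \epsilon, \epsilon-1/\alpha, \infty$, the right-hand side $r(\omega)$ is jointly analytic in $(\omega,\epsilon,\alpha)$ on a neighborhood of $K$. The classical analytic dependence of ODE solutions on initial data and parameters then furnishes $F_E(\epsilon, u-u_0)$ on $|u - u_0| < l_0$, with $l_0$ uniform in the parameters.

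For the saddle form $F_S$, I would perform the weighted blow-up $\omega = \epsilon s$, $\tau = \epsilon(u - u_0)$. A direct substitution transforms the ODE into
\[
\frac{ds}{d\tau} \;=\; \frac{s(s-1)}{1 + \alpha\epsilon(s-1)},
\]
in which the parameters enter only through the product $\alpha\epsilon$; the finite singularities are $s = 0, 1$ and $s = 1 - 1/(\alpha\epsilon)$, the last of which stays near $\infty$ for small $\alpha\epsilon$. Analytic dependence on parameters then yields the required $F_S(\epsilon, \tau)$, giving $\omega = \epsilon F_S(\epsilon, \epsilon(u-u_0))$. The node form $F_N$ is obtained symmetrically from the substitution $F_N = 1/(\alpha\omega)$, $\sigma = (u-u_0)/\alpha$, under which the ODE becomes
\[
\frac{dF_N}{d\sigma} \;=\; -\frac{F_N(1 - \alpha\epsilon F_N)}{1 + (1 - \alpha\epsilon)F_N},
\]
again analytic in $(F_N,\alpha\epsilon)$, and producing $\omega = \alpha^{-1}/F_N(\epsilon, \alpha^{-1}(u-u_0))$.

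The main obstacle will be the uniformity of a single length bound $l_0$ across the three charts. For case $S$, a path of length $l_0$ in $u$ corresponds to a \emph{shorter} path of length $|\epsilon|\,l_0$ in $\tau$, so the blown-up solution trivially stays in the domain of analyticity of the rescaled ODE. For case $N$ the path in $\sigma$ has length $l_0/|\alpha|$, potentially large, but since the admissible $\omega_0$ correspond to $F_N$ near $0$ (or near the pole at $F_N \approx -1$), the explicit limiting solution $F_N e^{F_N} = C e^{-\sigma}$ of the $\alpha\epsilon = 0$ equation shows that $F_N(\sigma)$ remains bounded away from the other singularities for all $\sigma \geq 0$, and a perturbative argument extends this to small nonzero $\alpha\epsilon$. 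Compactness of $\overline{\bbC}_\omega$ then implies that every initial value $\omega_0$ falls into at least one of the three regions, covering all paths of length at most $l_0$ uniformly in the parameters.
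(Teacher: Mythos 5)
Your construction is essentially the paper's own proof: the same decomposition of the $\omega$-sphere into an equatorial ring and the two polar charts, the same blow-ups $\omega=\epsilon y$ with rescaled time $s=\epsilon(u-u_0)$ and $z=1/(\alpha\omega)$ with $s=(u-u_0)/\alpha$, transformed equations that agree exactly with the paper's (the parameter entering only through the product $\alpha\epsilon$), and in each chart the conclusion drawn from analytic dependence of ODE solutions on initial conditions and parameters, with the three base solutions $F_E,F_S,F_N$ fixed by normalized initial data.

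The one point where you go beyond the paper, the uniformity of a single $l_0$ in the north chart, is also the one step of your argument that does not hold as written. The continuation must be valid along arbitrary \emph{complex} paths of $u$-length at most $l_0$, hence along $\sigma$-paths of length $l_0/|\alpha|$ in arbitrary directions; the relation $F_N e^{F_N}=Ce^{-\sigma}$ only shows decay for real $\sigma\ge 0$, while along directions in which $\re\sigma$ decreases the solution near the hyperbolic point $z=0$ grows like $e^{|\re\sigma|}$ and exits any fixed neighborhood after $\sigma$-time of order $\log(1/|z_0|)$. Moreover, ``admissible $\omega_0$ correspond to $F_N$ near $0$'' is inaccurate: $|\omega_0|>1$ only gives $|z_0|<1/|\alpha|$, and the chart also contains the pole $z\approx -1$ (harmless, since only multivalued analyticity is claimed, but it defeats a ``stays near $0$'' argument); and a perturbative extension over $\sigma$-intervals of length $\sim 1/|\alpha|$ is not automatic, since naive Gronwall-type estimates produce factors $e^{C/|\alpha|}$. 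To be fair, the paper itself disposes of this chart with a one-line assertion (fixing $F_N$ as the $\epsilon$-analytic solution in $|\omega_0|>1/2$ and declaring validity along paths of bounded length), so your proposal is no less complete than the published proof; but the specific justification you give for the north chart should be replaced, e.g.\ by the statement that the rescaled field is jointly analytic in $(z,\alpha\epsilon)$ away from $z=-1$ and $z=1/(\alpha\epsilon)$, and that only analytic multivalued continuation within that chart is being claimed.
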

\begin{remark}
The indices $S,E,N$ of functions come from the south pole, equator
and north pole on the Riemann sphere.
\end{remark}

\begin{proof}
In the whole proof the logaritmic chart is assumed $u=\log h$. We
will use the notation $\omega(u,\ep)$.
One easily observes that the equation \eqref{compeq} has the
following singularities: zeros of order $1$ at $\omega=0$,
$\omega=\ep$ and $\omega=\infty$ and pole of order 1 at
$\omega=-\alpha^{-1}+\ep$. For $\ep=0$ they degenerate to a single
pole of order $2$ at $\omega=0$. Let two discs centered at $0$ and
$\infty$ respectively, both of radius $r_0/2$ contain all these
singularities for $|\ep|<\ep_0$. Thus, on the ring
$R=R(r_0,r_0^{-1})$ the rational function $\tfrac{\omega
(\omega-\ep)}{1+\alpha(\omega-\ep)}$ is bounded by a constant $M$.
Let $\omega(u_0,\ep)=\omega_0\in R$ and
$\mathrm{dist}(\omega_0,\partial R)=\delta$. Analytic continuation
of $\omega$ along any path $l$ starting at $t_1$, of length $\leq
\delta/M$ is so contained in $R$ and satisfies estimate $|\omega -
\omega_0|\leq M\, |l|$. Moreover, this solution depends analytically
on $\ep$. Defining the "base" solution $F_E$ on the ring $R$ by the
initial condition $F_E(\ep,0)=1$ we get
\[
\omega(u,\ep) = F_E(\ep,u-u_0),
\]
where $u_0=u_1 - \int_1^{\omega_0}
\frac{1+\alpha(\omega-\ep)}{\omega (\omega-\ep)} \dr \omega$.

Now, we consider the lower semi-sphere $|\omega| < 1$ in the Riemann
sphere $\overline \bbC$. We make the following blow up
transformation
\[
\omega= \ep\, y, \qquad s=\ep\, u.
\]
The equation \eqref{compeq} takes the form
\[
y\prime = \frac{y (y-1)}{1+\ep \alpha(y-1)}.
\]
The solution $y=F_S(\ep,s)$, fixed by the initial condition
$F_S(\ep,0)=\tfrac 12\, \ep^{-1}$, is $\ep$-analytic as far as it
remains in a safe distance from "upper" singularities, e.g.\ if
$|y|<2/\ep$. Thus, the compensator reads $\omega(u,\ep) = \ep
F_S(\ep,\ep(u-u_0))$ and this formula is valid along any path of
length bounded by $1/M$, provided $|\omega_0|<1$.

Finally, on the upper semi-sphere $|\omega| > 1$, the blow up map
$x=\alpha^{-1} /z,\ s= \alpha^{-1}\, u$ transforms the equation
\eqref{compeq} to
\[
z\prime = - \frac{z (1-\alpha\ep\, z)}{1+z -\alpha \ep\, z}.
\]
We fix the solution $F_N(\ep,s)$ which is $\ep$-analytic in the region $|\omega_0|>1/2$. Thus, the following formula for compensator $ $ remains valid along any path of length bounded by $1/M$, provided $|\omega_0|>1$.\\
\end{proof}

\section{Proof of Proposition \ref{pr:zersing}}

Note that it is enough to proof the statement pointwise with respect
to all parameters, in particular $\ep$. As the case $\ep\neq0$ was
already investigated \cite{BM}, it remains to prove the claim in the
non-trivial exponential case $\ep=0$.

The general strategy of the proof is the following. We construct
explicitly  a particular solution of the variation equation \eqref{eq:thmvar}.
Since solutions of the corresponding homogeneous variation equation (i.e. $\var_{a_1,\ldots,a_k I\equiv 0}$) were
already considered in \cite{BM}, this gives us a description of the
general solution. To construct a particular solution of
\eqref{eq:thmvar} we first solve it explicitely up to a sufficiently
small remainder on the right-hand side (Lemma \ref{lem:prf}). Next
the solution to the new equation is found in terms of convergent
series (Lemma \ref{lem:sums}).
\begin{remark}
This stategy is in the spirit of the two steps construction of a
solution of the homological equation associated to the normal form
problem for diffeomorphisms and vector fields \cite{r1,IY}.
\end{remark}

In this section we will work in the logarithmic chart $u=\log h$. In
this coordinate the variation operator $\var_a$ \eqref{vardef}
becomes  a difference operator
\begin{equation}
\label{diffdef} \diff_a f=f(u+ia\pi)-f(u-ia\pi).
\end{equation}
We introduce also the notation for the iterated differences
\begin{equation*}
\diff_{a_1,\ldots,a_k}:= \diff_{a_1}\cdots \diff_{a_k}.
\end{equation*}
The multivalued functions defined in a punctured neighborhood of
$h=0$ become functions holomorphic in the half-planes
$\hp_{L-}=\{\re u < -L\ll 0\}$. All functions below are assumed to
be of this type.


Let $\mathcal{P}(u)$ be the  space $\C[u,\tfrac1u,\log u]$ of
polynomials in $\log u$ and Laurent polynomials in $u$.
\begin{lemma}
\label{lem:prf}
Assume that $f(\tfrac1u,\tfrac{\log u}{u})$ is a
holomorphic function of the second variable $\tfrac{\log u}{u}$ and
meromorphic function of $\tfrac{1}{u}$.

\begin{enumerate}
\item For any real $A\in\bbr$ there exists a polynomial $p\in\mathcal{P}(u)$ such that
\begin{equation}
\label{ppr} |(f- p)|\leq M |u|^{-A}
\end{equation}
for some constant $M$.
\item The space $\mathcal{P}$ is closed under the integration operation, i.e.\ for any $p\in \mathcal{P}$ there exists $P\in\mathcal{P}(u)$ such that $P^\prime =p$.
\item For any real $A\in\bbr$ there exists a function $P_f\in \mathcal{P}$ such that
\begin{equation}
\label{prvar} |(f-\diff_{a_1,\ldots,a_k} P_f)|\leq M |u|^{-A}
\end{equation}
for some constant $M$.
\end{enumerate}
\end{lemma}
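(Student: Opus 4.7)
The plan is to address the three parts in order: (1) is a truncation of a Taylor expansion, (2) is a direct computation with monomials, and (3) combines the first two via an asymptotic inversion of $\diff_{a_1,\ldots,a_k}$ on $\mathcal{P}(u)$. For part (1), let $N$ be the order of the pole of $f$ in the first variable near the origin, so that $x^N f(x,y)$ extends holomorphically to some bidisc $\{|x|,|y|<r\}$ with convergent double Taylor series $\sum c_{m,n} x^m y^n$. Substituting $x=1/u$, $y=\log u/u$ yields
\begin{equation*}
f\!\left(\tfrac{1}{u},\tfrac{\log u}{u}\right) = \sum_{m,n\geq 0} c_{m,n}\,u^{N-m-n}(\log u)^n,
\end{equation*}
whose partial sum $p$ over $m+n\leq K$ lies in $\mathcal{P}(u)$. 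Cauchy estimates bound the tail by $C|u|^N(|\log u|/|u|)^{K+1}$, which is dominated by $|u|^{-A}$ once $K$ is chosen sufficiently large, proving (\ref{ppr}). For part (2), a finite recursion on the $\log$-exponent, using integration by parts in the form $\int u^a(\log u)^b\,\dr u = \tfrac{u^{a+1}(\log u)^b}{a+1} - \tfrac{b}{a+1}\int u^a (\log u)^{b-1}\,\dr u$ when $a\neq -1$, and $\int u^{-1}(\log u)^b\,\dr u = (\log u)^{b+1}/(b+1)$ when $a=-1$, shows that each monomial of $\mathcal{P}(u)$ integrates to a finite $\bbc$-linear combination of monomials of the same form.

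For part (3), apply part (1) to replace $f$ by a polynomial $p\in\mathcal{P}(u)$ with $|f-p|\leq M|u|^{-A}$, reducing the problem to constructing $P\in\mathcal{P}(u)$ with $|\diff_{a_1,\ldots,a_k}P - p| = O(|u|^{-A})$. Each $\diff_a$ admits the convergent Taylor expansion
\begin{equation*}
\diff_a = 2\sum_{j\geq 0}\frac{(ia\pi)^{2j+1}}{(2j+1)!}\,\partial_u^{2j+1} = 2ia\pi\,\partial_u\,\circ\,\bigl(\mathrm{Id} + \tfrac{(ia\pi)^2}{6}\partial_u^2 + \cdots\bigr)
\end{equation*}
on elements of $\mathcal{P}(u)$ in $\hp_{L-}$. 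Formally inverting this series (which amounts to inverting $\mathrm{Id}+O(\partial_u^2)$ and composing with $\tfrac{1}{2ia\pi}\int \cdot \,\dr u$) and truncating at a finite order produces an operator $R_a\colon\mathcal{P}(u)\to\mathcal{P}(u)$, built from iterated integration (preserving $\mathcal{P}(u)$ by part (2)) and finitely many derivatives. Since each derivative strictly lowers the $u$-exponent of a monomial $u^c(\log u)^b$, the residual $\diff_a R_a p - p$ is a finite combination of monomials whose $u$-exponent is smaller than that of $p$ by any prescribed amount. Composing $R_{a_k}\circ\cdots\circ R_{a_1}$ and iterating the reduction produces the required $P_f$.

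The main technical obstacle is the bookkeeping in part (3): one must verify that the truncated formal inverse of each $\diff_{a_i}$ does live in $\mathcal{P}(u)$ (which follows from closure under integration in part (2)), and that the $u$-order of the residual strictly decreases after each application of the composite operator, uniformly enough to drop below $-A$ in finitely many iterations. Both facts reduce to finite combinatorial checks on monomials $u^c(\log u)^b$, so the difficulty is notational rather than conceptual; the conceptual inputs are the Taylor expansion of $\diff_a$ identifying its leading part as $2ia\pi\,\partial_u$, together with closure of $\mathcal{P}(u)$ under integration.
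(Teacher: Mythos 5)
Your parts (1) and (2) are essentially the paper's own argument: truncate the double expansion of $f$ in $\tfrac1u$ and $\tfrac{\log u}{u}$ at a finite order dictated by $A$, and prove closure of $\mathcal{P}(u)$ under integration by induction on the $\log$-degree using $(u^l\log^m u)'=lu^{l-1}\log^m u+mu^{l-1}\log^{m-1}u$. In part (3) you take a genuinely different route. The paper never expands $\diff_a$ in powers of $\partial_u$: it uses only the leading-order inverse $\tfrac{1}{2\pi i a}\int$ together with the exact identity $\diff_a\tfrac{1}{2\pi ia}\int p=\tfrac{1}{2\pi ia}\int_{u-i\pi a}^{u+i\pi a}p(s)\,ds$, so the residual $p-\diff_a\tfrac{1}{2\pi ia}\int p=\tfrac{1}{2\pi ia}\int_{u-i\pi a}^{u+i\pi a}\bigl(p(u)-p(s)\bigr)\,ds$ is controlled by $\sup|p'|$ and gains exactly one power of $|u|^{-1}$ per step; a simple iteration then pushes the error below $|u|^{-A}$. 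You instead write $\diff_a=2\sinh(i\pi a\,\partial_u)$, truncate the formal inverse, and gain all orders at once; this buys a more explicit approximate inverse $R_a$, at the cost of having to control an infinite operator series rather than a single integral.

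That cost is where your write-up has a genuine (though fixable) inaccuracy: $\diff_a$ applied to a monomial $u^c\log^b u$ is \emph{not} an element of $\mathcal{P}(u)$, so the residual $\diff_a R_a p-p$ is not ``a finite combination of monomials''; it is the tail $2\sum_{j>J}\tfrac{(ia\pi)^{2j+1}}{(2j+1)!}\partial_u^{2j+1}R_a p$ of a convergent series. To close the argument you must actually estimate this tail, e.g.\ by Cauchy estimates $|\partial_u^{j}g(u)|\le j!\,(2/|u|)^{j}\sup_{|v-u|\le |u|/2}|g(v)|$ applied to the finitely many monomials $g$ of $R_ap$, which for $\re u<-L$ with $L$ large gives a geometric series summing to $O(|u|^{-J})$ times the order of $R_ap$; you should also note that errors produced at one stage are merely shifted by $\pm i\pi a_j$ under the subsequent $\diff_{a_j}$, which is harmless on a half-plane. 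With those two routine verifications supplied, your construction of $P_f$ is correct; the paper's integral-representation trick is precisely what lets it avoid any infinite series at this point.
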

\begin{proof}
{\it (1)} The function $f$ has the following power series expansion
\[
f= \sum_{m\geq 0,\; l\geq -l_0} a_{ml} \frac{\log ^m u}{u^{m+l}}
\]
We define $p$ to be the sum of all terms with $m+l\leq A+1$; this sum is
finite, so $p\in\mathcal{P}(u)$.

{\it (2)} We use the induction by $(\log u)$-degree of $p$. If $p$ is
a Laurent polynomial in $u$, the integral $\int p$ is a sum of a
Laurent polynomial in $u$ and a term $a\, \log u$, $a\in \bbC$. Consider relations
\begin{equation}
\label{monder} (u^l\, \log^m u)^\prime = l u^{l-1}\log^m u + m
u^{l-1}\, \log^{m-1} u,\qquad (\log^m u)^\prime = m u^{-1}\,
\log^{m-1}.
\end{equation}
Let $p\in \mathcal{P}(u)$ be an element of $\log u$-degree $\leq m$.
The integral $\int p$ is a sum of terms of $(\log u)$-degree $\leq m$
and $a\, \log^{m+1}u$, $a\in\bbc$.

{\it (3)} Points {\it (1), (2)} and simple induction reduce problem
to the following observation. For any $p\in\mathcal{P}(u)$ the
leading term of the solution to the difference equation $\diff_a
F=p$ is given by the integral $P=\int p$, i.e.
\[
|p|\leq M |u|^{-A} \Rightarrow \ |p-\diff_a \tfrac{1}{2\pi i a}
P|\leq \tilde{M} |u|^{-(A+1)}.
\]
We estimate
\begin{multline*}
|p-\diff_a \tfrac{1}{2\pi i a} P|= |p(u) - \tfrac{1}{2\pi i
a}\int_{u-\pi ia}^{u+\pi ia} p(s)|=|\tfrac{1}{2\pi i a}\int_{u-\pi
ia}^{u+\pi ia}(p(s)-p(u))| =\\
 = |\tfrac{1}{2\pi i a}\int_{u-\pi
ia}^{u+\pi ia}p^\prime (u+\xi_s)|\leq M |u^{-(A+1)}|.
\end{multline*}
The last inequality follows from the estimate $|p^\prime|\leq M^\prime |u|^{-(A+1)}$ valid for arbitrary $p\in\mathcal{P}(u)$ satisfying $|p|\leq M |u|^{-A}$.\\
\end{proof}


Let $Q_+$ (resp. $Q_-$) be an upper-left (resp. lower-left) quarter-plane defined as follows
$Q_+=\{u\in\bbc: \re u < -L,\ \im u > - K\}$ and $Q_-=\{u\in\bbc: \re u < -L,\ \im u <
K\}$ for some positive constants $K,L$. We construct here a solution
of the variation equation in $Q_+$. This is sufficient for our
purposes, since for application of the Petrov's argument we need
only estimates in a half-strip $\{\re u<-L, |\im u|< K\}$ with some
finite $L,K>0$.
\begin{lemma}
\label{lem:sums} Let $f$ be a holomorphic function on $Q_\pm$ which
satisfies the estimate $|f (u)|\leq M|u|^{-A}$ on $Q_\pm$ for some
constant $M$. Assume that $A>n$. Then the following series
\begin{equation}
\label{sumsf} F_\pm= (\mp 1)^k \sum_{m_1,\ldots,m_k>0} f\Big(u\pm
2\pi i(a_1\,m_1+\cdots+a_k\,m_k) \mp \pi i (a_1+\cdots+a_k) \Big)
\end{equation}
converges and solves the difference equation on $Q_\pm$
\begin{equation}
\label{diffeqs} \diff_{a_1,\ldots,a_k} F_\pm = f, \qquad a_j>0.
\end{equation}

Moreover, the solution $F_\pm$ is of order $A-n-\ep$, i.e.\ for all
$B<A-n$ the solution $F_\pm$ satisfies the estimate
\begin{equation}
\label{sumsest} |F_\pm|\leq M_{A-n-B}|u|^{-B}.
\end{equation}
\end{lemma}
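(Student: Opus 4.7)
The plan is to establish three things in sequence: convergence of the series, the telescoping identity which yields the difference equation, and the growth bound.

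First, I would show convergence on $Q_+$ (the $Q_-$ case being symmetric). The key geometric point is that every term of the series is indeed evaluated in $Q_+$: since each $m_j\geq 1$ and $a_j>0$, the shift $2\pi i\sum a_jm_j - \pi i\sum a_j$ has positive imaginary part $\geq \pi\sum a_j$, so if $\im u>-K$ then the shifted argument also has imaginary part $>-K$, and the real part is unchanged. Setting $c=2\pi\min_j a_j$ and using $|w|\geq\max(|\re w|,|\im w|)/\sqrt2$, I would bound
\begin{equation*}
\bigl|u+2\pi i\textstyle\sum a_jm_j-\pi i\sum a_j\bigr|\;\gtrsim\;|u|+c(m_1+\cdots+m_k).
\end{equation*}
Grouping by $t=m_1+\cdots+m_k$ (with at most $\binom{t-1}{k-1}\leq t^{k-1}$ representations) the series is dominated termwise by $M\sum_{t\geq k} t^{k-1}(|u|^2+c^2t^2)^{-A/2}$. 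Comparing to the integral $\int_0^\infty s^{k-1}(|u|^2+c^2s^2)^{-A/2}\,ds$ and substituting $s=|u|\tau$ gives $O(|u|^{-(A-k)})$, convergent precisely because $A>k$ (I read the hypothesis $A>n$ as $A>k$, the number of iterated differences). This also establishes the growth bound~(\ref{sumsest}): since $|u|\geq L$ on $Q_\pm$, any $B<A-k$ satisfies $|u|^{-(A-k)}\leq L^{B-(A-k)}|u|^{-B}$. Uniform convergence on compact subsets of $Q_+$, inherited from the same dominated-series estimate, gives holomorphy of $F_+$.

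Second, to verify $\Delta_{a_1,\ldots,a_k}F_+=f$, I would apply the operators one at a time, exploiting the telescoping structure. Apply $\Delta_{a_k}$ first: in the $m_k$-th term the argument becomes $u+\pi ia_k(2m_k-1)+(\text{rest})$, so shifting $u\mapsto u\pm\pi ia_k$ converts $(2m_k-1)$ into $2m_k$ or $2m_k-2$. Summing the resulting difference in $m_k$ telescopes to the $m_k=0$ boundary term; the boundary term at $m_k=\infty$ vanishes because the dominating tail $\sum_{m_k\geq N}(|u|+cm_k)^{-A}$ tends to $0$ as $N\to\infty$ (by the same estimate used for convergence, which permits interchanging $\Delta_{a_k}$ with the sum). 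The sign $(-1)^k$ in~(\ref{sumsf}) flips to $(-1)^{k-1}$ and one index is eliminated, leaving the analogous series with $k-1$ differences. Iterating this reduction exactly $k$ times produces $f(u)$, as desired.

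Third, the growth estimate~(\ref{sumsest}) was already obtained above, because the termwise majorant yields $|F_+|\leq C|u|^{-(A-k)}$ directly; absorbing the constant $|u|^{-(A-k-B)}\leq L^{-(A-k-B)}$ for $B<A-k$ into $M_{A-k-B}$ finishes the bound in the form stated.

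The main obstacle is the telescoping step, which on its face is a routine collapse of a formal sum but in fact requires a legitimate justification of the interchange between the operator $\Delta_{a_k}$ and an infinite summation, together with the vanishing of the ``boundary at infinity''. Both reduce to the tail estimate produced in the first step, so once the dominated majorant is in hand the two remaining steps become essentially mechanical. A minor auxiliary point is verifying that every shifted argument indeed lies in the quarter-plane where the hypothesis $|f|\leq M|u|^{-A}$ applies; this is exactly where the precise shape of $Q_\pm$ (quarter-plane rather than strip) is used, since we only shift in the ``safe'' direction for the imaginary part.
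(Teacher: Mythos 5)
Your proof is correct, and it follows the same basic strategy as the paper (sum the explicit series, telescope the differences, and use the quarter-plane geometry for the estimate), but the two arguments are organized differently. The paper reduces everything to the case $k=1$ by induction: for a single difference it proves that $|f|\le M|u|^{-A}$ with $A>1$ yields $|F_\pm|\le M_{A-1-B}|u|^{-B}$ for every $B<A-1$, using the splitting $|u\pm(2\pi iam-\pi ia)|^{-A}=|u\pm\cdot|^{-B}\,|u\pm\cdot|^{B-A}$ together with the boundedness of $\bigl|u/(u\pm(2\pi iam-\pi ia))\bigr|$ on $Q_\pm$ (false on the whole half-plane, which is exactly why the quarter-planes are introduced); iterating loses ``$1+\epsilon$'' per step, which is the source of the $A-n-\epsilon$ in the statement. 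You instead estimate the $k$-fold sum in one shot, grouping by $t=m_1+\cdots+m_k$ with the lattice count $\binom{t-1}{k-1}\le t^{k-1}$ and comparing with $\int_0^\infty s^{k-1}(|u|+cs)^{-A}\,ds$; this buys a marginally sharper conclusion (order exactly $A-k$, with no $\epsilon$ loss), at the cost of needing the lower bound $|u+is|\ge c'(|u|+s)$ on $Q_\pm$, whose constant depends on $K$, $L$ and $\min_j a_j$ --- a point you state with ``$\gtrsim$'' and which does hold, again precisely because the imaginary part is bounded below on the quarter-plane. Your reading of the hypothesis $A>n$ as $A>k$ (the number of iterated differences) is the right one and is what the paper's induction actually uses. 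Finally, your explicit justification of the telescoping --- the interchange of $\Delta_{a_k}$ with the sum and the vanishing of the boundary term at $m_k=\infty$, both consequences of the dominating tail estimate, plus the observation that all shifted arguments stay in $Q_\pm$ --- spells out what the paper compresses into ``a direct computation shows that it satisfies the difference equation.''
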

\begin{proof}
By induction, it is enough to prove the following statement. Let
$|f|\leq M |u|^{-A}$, $A>1$ on $Q_\pm$. Then the formula
\begin{equation}
\label{stepsumsf} F_\pm = \mp \sum_{m=1}^\infty f\Big(u\pm (2\pi
ia\, m-\pi i a)\Big)
\end{equation}
solves the difference equation $\diff_a F_\pm = f$ and $F_\pm$
satisfies the estimate
\begin{equation}
\label{stepest} |F_\pm|\leq M_{A-1-B}|u|^{-B}
\end{equation}
for $B<A-1$.

The series \eqref{stepsumsf} is convergent, so the function $F_\pm$
is well defined. A direct computation shows that it satisfies the
difference equation. We estimate
\begin{multline*}
|F_\pm|\leq \sum_m |f\Big(u\pm (2\pi ia\, m-\pi i a)\Big)| \leq \\
M |u|^{-B} \sum_m \Big|\tfrac{u}{u\pm (2\pi ia\, m-\pi i a)}\Big|^{B} |u\pm (2\pi ia\, m-\pi i a)|^{B-A}.
\end{multline*}
The function $\Big|\tfrac{u}{u\pm (2\pi ia\, m-\pi i a)}\Big|\leq M_\pm$ is bounded on $Q_\pm$ (not true on the whole half-plane $\mathcal{H}_-$!) and the series $\sum_m  |u\pm (2\pi ia\, m-\pi i a)|^{B-A}$ converges since $B-A<-1$. This shows the estimate \eqref{stepest}.\\
\end{proof}
\begin{remark}
Note that formula \eqref{stepsumsf} for $F_\pm$ defines a
holomorphic function on the whole half plane $\mathcal{H}_-$. The
difference
\[
F_- - F_+ = \sum_{m\in\bbz} f(u+\pi i a + 2\pi i a\, m)
\]
defines a $2\pi i a$ periodic function on $\mathcal{H}_-$. However,
the estimate \eqref{stepest} does not extend to $\mathcal{H}_-$.
Passing to the variable $\tilde{h}=e^u/a$ the difference $(F_- -
F_+) (\tilde{h})$ defines a germ of a meromorphic function at the
orgin. This situation is in the spirit of the functional cochain \cite{ilyashenko}.
\end{remark}
\begin{corollary}
Using Lemmas \ref{lem:prf} and \ref{lem:sums} we can solve
explicitely the difference equation $\diff_a F=f$, where $f(\tfrac
1u,\tfrac{\log u}{u})$ is as in Lemma \ref{lem:prf}. Indeed, the
general solution consists of 3 terms: principal part, given by
$P\in\mathcal{P}(u)$, remainder given by series \eqref{stepsumsf}
and an arbitrary solution to the homogeneous equation $\diff_a F_H
\equiv 0$. The latter one is given by a series $\sum_l a_l e^{l
u/a}$.
\end{corollary}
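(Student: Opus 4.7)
The plan is to produce a particular solution of $\diff_a F = f$ by splitting it into a polynomial-logarithmic ``principal part'' (which we kill using Lemma~\ref{lem:prf}) and a rapidly decaying residual (which we invert using Lemma~\ref{lem:sums}), and then to describe the kernel of $\diff_a$ separately. Adding the two gives the general solution and exhibits all three terms claimed in the statement.

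For the particular solution, fix any real $A > 1$. Apply Lemma~\ref{lem:prf}(3) with $k=1$ to the given $f(\tfrac{1}{u}, \tfrac{\log u}{u})$ to obtain $P \in \mathcal{P}(u)$ and a constant $M$ such that the residual
$$g(u) := f(u) - \diff_a P(u)$$
satisfies $|g(u)| \le M|u|^{-A}$ on the relevant half-plane. Since $A > 1$, Lemma~\ref{lem:sums} (applied with $k=1$) produces on each quarter-plane $Q_\pm$ the explicit solution $F_\pm$ of $\diff_a F_\pm = g$ given by the series \eqref{stepsumsf}. The sum
$$F_{\mathrm{part}} := P + F_\pm$$
then satisfies $\diff_a F_{\mathrm{part}} = \diff_a P + g = f$ on $Q_\pm$, producing the first two terms of the decomposition: the principal part $P \in \mathcal{P}(u)$ and the remainder given by \eqref{stepsumsf}.

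For the kernel, any $F_H$ with $\diff_a F_H \equiv 0$ satisfies $F_H(u + i\pi a) = F_H(u - i\pi a)$, that is, $F_H$ is $2\pi i a$-periodic on the half-plane $\hp_{L-}$. The substitution $\zeta = e^{u/a}$ maps $\hp_{L-}$ onto a punctured disc $\{0 < |\zeta| < e^{-L/a}\}$ and quotients out the period, so $F_H$ descends to a holomorphic function $\widetilde{F}_H(\zeta)$ on this punctured disc. Its Laurent expansion $\widetilde{F}_H(\zeta) = \sum_{l \in \Z} a_l \zeta^l$ pulls back to the asserted series $\sum_l a_l e^{l u/a}$ in $u$. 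Since two particular solutions of $\diff_a F = f$ differ by an element of the kernel, the general solution is $F_{\mathrm{part}} + F_H$, which is exactly the three-term description in the corollary.

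The only subtlety worth flagging is that Lemma~\ref{lem:sums} produces $F_+$ on $Q_+$ and a (generally different) $F_-$ on $Q_-$, so the decomposition is being asserted on a single quarter-plane at a time rather than on the entire half-plane $\hp_{L-}$; as noted in the remark following Lemma~\ref{lem:sums}, the transition $F_+ \leftrightarrow F_-$ is itself a $2\pi i a$-periodic function, i.e.\ a member of the homogeneous family, so the three-term description is truly a parametrization of the full solution set on each $Q_\pm$.
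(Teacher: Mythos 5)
Your proposal is correct and matches the argument the paper intends: Lemma \ref{lem:prf}(3) supplies the principal part $P$, Lemma \ref{lem:sums} inverts $\diff_a$ on the decaying residual via \eqref{stepsumsf}, and the kernel of $\diff_a$ consists of the $2\pi i a$-periodic functions, whose expansion in $\zeta=e^{u/a}$ gives $\sum_l a_l e^{lu/a}$. Your added remark about the decomposition living on each quarter-plane $Q_\pm$, with $F_--F_+$ itself periodic, is consistent with the paper's own remark following Lemma \ref{lem:sums}.
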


%

%
%

In the next lemma we investigate the analytic properties of the
generalized compensator $\omega(h,\ep,\alpha)$ (see \eqref{omega})
for $\ep=0$. Recall that $\omega(h,\ep,\alpha=0)$ is the Roussarie
compensator. Below we study the case with $\ep=0$ and arbitrary
$\alpha$ in the logaritmic coordinate $u=\log h$. We denote
\begin{equation}
\label{wfn} w = - \frac1{\omega(e^u,0,\alpha)},
\end{equation}
so $w^{\alpha}e^{-1/w}=e^u$.
\begin{lemma}
For $\re u\ll 0$ we have
\label{lem:cmpan}
\begin{equation}
\label{anw} w=\tfrac 1u (a+ g_\alpha(\tfrac 1u, \tfrac{\log u}{u})),
\end{equation}
where $\bbc\ni a\neq 0$, $g_\alpha(\cdot,\cdot)$ is an analytic
function and $g_\alpha(0,0)=0$.
\end{lemma}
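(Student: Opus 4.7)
The plan is to extract the leading order behavior $w\sim -1/u$ algebraically and then apply the holomorphic implicit function theorem in the variables $t:=1/u$ and $s:=\log u/u$. The starting point is the defining relation $\widetilde H(w,0,\alpha)=e^u$, that is $w^{\alpha}e^{-1/w}=e^{u}$; taking logarithms yields
\[
\alpha\log w-\frac{1}{w}=u.
\]
To expose the expected leading term, I would substitute $W:=uw$, so that $w=W/u$, $1/w=u/W$ and $\log w=\log W-\log u$ for any consistently chosen branch. Dividing the resulting identity by $u$ gives the single scalar equation
\[
G(W,t,s;\alpha)\;:=\;\frac{1}{W}+1-\alpha\,t\log W+\alpha\,s\;=\;0.
\]

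At the base point $(W,t,s)=(-1,0,0)$ one has $G=0$ and $\partial_{W}G=-1/W^{2}-\alpha t/W=-1\neq 0$, after fixing any branch of $\log W$ in a neighborhood of $W=-1$. The analytic implicit function theorem then furnishes a unique analytic solution $W=W_{\alpha}(t,s)$ near $(t,s)=(0,0)$ with $W_{\alpha}(0,0)=-1$, depending analytically on $\alpha$. Setting $a:=-1$ and $g_{\alpha}(t,s):=W_{\alpha}(t,s)+1$, this gives the claimed form
\[
w\;=\;\frac{W_{\alpha}(t,s)}{u}\;=\;\frac{1}{u}\Bigl(a+g_{\alpha}\bigl(\tfrac{1}{u},\tfrac{\log u}{u}\bigr)\Bigr),
\]
with $g_{\alpha}$ analytic at the origin and $g_{\alpha}(0,0)=0$, as required. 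Analytic dependence on $\alpha$ is automatic, since $G$ is itself jointly analytic in $(W,t,s,\alpha)$.

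The last step is to identify the analytic branch produced by the implicit function theorem with the compensator $w=-1/\omega$ of Proposition~\ref{ancomp}. On the negative real axis the relation $\alpha\log w-1/w=u$ forces $w\sim -1/u$ as $u\to-\infty$ (the $1/w$ term dominates $\alpha\log w$, since $|\log w|\sim\log|u|\ll|u|$), so $uw\to -1$; this matches the initial value of the implicit solution, and the two agree by uniqueness. The main (mild) obstacle is to track branches of $\log u$ and $\log w$ on the half-plane $\re u<-L$, but this is harmless: the compensator is itself multivalued, $\log w$ and $\log u$ are tied together by $\log w=\log W-\log u$ on each branch, and $1/u$, $\log u/u$ are treated as independent formal arguments in the final expansion, so any consistent choice of branches produces the same local analytic germ.
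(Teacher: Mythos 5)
Your argument is correct and is essentially the paper's own proof: the paper also reduces $\alpha\log w-1/w=u$ by the substitution $w=-w_1/u$ (your $W=uw$ up to sign), divides by $u$, and applies the implicit function theorem in the variables $1/u$ and $\log(-u)/u$ at the base point where the rescaled unknown equals $1$ (your $W=-1$). Your extra care in matching the branch with the real compensator asymptotics $uw\to-1$ is a harmless addition that the paper leaves implicit; the only cosmetic difference is that the paper's choice of sign keeps $\log w_1$ near the principal branch at $1$, avoiding the branch cut at $W=-1$ you had to fix by hand.
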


\begin{proof} Indeed, writing $w=-\frac {w_1} u$, we get
$$
z_1\alpha \log w_1-\alpha z_2+\frac 1{w_1}=1,\qquad z_1=\frac 1
u,\quad z_2=\frac{\log(-u)}{u}.
$$
The left-hand side of this equation is an analytic function
$F=F(w_1,z_1,z_2)$ in a neighborhood of $(1,0,0)$, and $F(1,0,0)=1$.
Since $\frac{\partial F}{\partial w_1}\mid_{(1,0,0)}=1$, by implicit
function theorem we get
$$
w=-\tfrac 1 u\left(1+g(\tfrac 1 u,\frac{\log(-u)}{u})\right).
$$
\end{proof}

\begin{proof}[Proof of Proposition \ref{pr:zersing}]
Note that the main difficulty in the proof is to control the form of the singularity of the function $I$ at $h=0$. Indeed, consider, as a toy example, the special case when $I$ is a meromorphic function of $h$. Then, the moderate growth bound \eqref{mdgrowth} restricts the order of pole at $h=0$ to $N$ and so the increment of argument satisfies \eqref{zerest}.
To prove a proposition in the general case it is enough to show that
the form of singularity which is allowed by the variation relation
\eqref{eq:thmvar} together with the moderate growth estimation
forces an explicit bound for the increment of argument in terms of
$N$ only. Due to this idea, it is enough to work pointwise with
respect to all parameters (i.e.\ $\ep,\alpha,\ldots$). The case
$\ep\neq 0$ was already investigated in \cite{BM}. The conclusion
was that the leading term of the integral $I(h)$ at $h=0$ is a
monomial $h^A \log^k h$, with positive, integer $k$. Thus, the same
estimate as in the meromorphic case holds.

First we give a proof in a special case $\alpha=0$ (compare
\eqref{eq:ea family}). It contains the essence of the general case
with much less technical details.

{\it The $\alpha=0$ case.} The function $w$ given by formula
\eqref{wfn} reads $w=-\tfrac{1}{\log h}$. We use the logarithmic
chart $u=\log h$. By Lemma \ref{lem:prf}, there exists a polynomial
$P\in\bbc[\log u,u,\tfrac 1u]$ (leading term) such that
\[
|F-\diff_{a_1,\ldots,a_k}P|\leq M |u|^{-A},
\]
for some $A>n$ and a positive constant $M$. Thus, the iterated
variation (difference) of $I-P$ is of sufficiently high order and a
solution $F_+$ defined in $Q_+$ is given by the iterated sum formula
\eqref{sumsf}. Moreover, it is of lower order then $P$.

Now, the iterated difference vanishes identically
\[
\diff_{a_1,\ldots,a_k} (I-P-F_+)\equiv 0.
\]
Thus, by Lemma 4.8 from \cite{BM}, the principal term of $I-P-F_+$
has the form $h^\alpha \log^m h$. Finally, the principal term of $I$
is either a monomial $h^\alpha \log^m h$, $\alpha\geq -N$,
$m\in\bbz$, $m\geq 0$ or $\log^lh\, \log^m(\log h)$, $m,l\in\bbz$,
$m\geq 0$. In both cases the upper bound \eqref{zerest} holds.
\medskip

{\it The general case ($\alpha\neq 0$)}. By Lemma \ref{lem:cmpan} we
know that the function $w$ has the following form
\[
w=\tfrac 1u (a+g(\tfrac 1u,\tfrac{\log u}{u})), \qquad a\neq 0,
\]
and $g$ is a holomorphic function, $g(0,0) = 0$. For arbitrary
meromorphic function $F(\cdot)$, the composition $F(w)$ has the
following expansion
\[
F(w) = \sum_{k\geq -k_0} (\tfrac 1u)^k q_k(\log u),
\]
where $q_k$ is a polynomial. Now we can repeat the argument used in the special case $\alpha = 0$. We take the principal part $P_F$ of $F(w)$ up to order $A>n$. It is a polynomial in $\log u$ and Laurent polynomial in $u$. We can solve the iterated difference equation explicitly, up to terms of higher order (Lemma \ref{lem:prf}). Then, by Lemma \ref{lem:sums}, a solution $F_+$ to the iterated difference equation for $(I-P_F)$ is given by the iterated sum formula \eqref{sumsf}. Finally, we obtain that the leading term of $I$ is a monomial $h^\alpha \log^m h$, $\alpha\geq -N$, $m\in\bbz$, $m\geq 0$ or $\log^l h\, \log^m(\log h)$, $m,l\in\bbz$, $m\geq 0$. In both cases the upper bound \eqref{zerest} holds.\\
\end{proof}
\begin{remark}
In the above proof one can replace the iterated sum solution $F_+$
by $F_-$, which is well defined over $Q_-$. The remaining part of
the proof works as well with $F_-$.
\end{remark}

\bibliographystyle{plain}

\end{document}